\numberwithin{equation}{section}
\begin{document}

\title{%
  The Motivic Picard--Lefschetz Formula
	\\ 
	{\large\itshape Preliminary version}\footnote{%
		Statements whose proofs
		will be expanded are marked with asterisks.%
	}%
}
\author{Ran Azouri, Emil Jacobsen}
\date{\today}
\maketitle

\begin{abstract}%
We prove a motivic enhancement of the classical Picard--Lefschetz formula.
Our proof is completely motivic,
and yields a description of
the motivic nearby cycles at
a quasi-homogeneous singularity,
as well as its monodromy,
in terms of an embedding of projective hypersurfaces.
\end{abstract}

\setcounter{tocdepth}{1}
\tableofcontents

\section{Introduction}
Our main theorem is a motivic enhancement of the Picard--Lefschetz formula.
That is,
in the presence of an isolated singularity
in the special fibre,
we compute the nearby cycles motive
and the associated monodromy.
The nearby cycles functor
is an important tool
in complex geometry,
$\ell$-adic theory,
and motives alike.
It plays a key role in the theory of perverse sheaves,
Hodge modules,
singularities,
etc.
The $\ell$-adic Picard--Lefschetz formula
is used in Deligne's proof of the Weil conjectures.

In order to introduce the setting,
fix a base field $k$,
write $\Affl := \Affl_k$,
and
let
\begin{equation*}
f \colon X \to \Affl
\end{equation*}
be a family of schemes.
Denote the fibre over $0\in\Affl$ by $X_0$.
Ayoub constructs~\cite{ayoub_thesis_2,ayoub_etale}
the \emph{(unipotent) motivic nearby cycles complex}
$\Psi_f\bm1 \in \DA(X_0)$,
in the infinity category of étale motives
with rational coefficients
over $X_0$.\footnote{%
	See Rmk.~\ref{rmk:upsilon}%
}
It agrees with its classical analogues
under the Betti and $\ell$-adic realizations.
Ayoub also constructs~[ibid.] a monodromy operator
\begin{equation*}
N \colon \Psi_f\bm1 \to \Psi_f\bm1(-1),
\end{equation*}
and this too is compatible with its
counterparts after realization.

The Picard--Lefschetz problem concerns the following situation:
assume that the generic fibre of $f \colon X \to \Affl$ is smooth,
and that there is a single, isolated singularity $o$
in the special fibre $X_0$.
The problem of computing
$\Psi_f\bm1$ and its monodromy
then reduces to 
computing the \emph{variation}, which describes the monodromy at the point.
\begin{equation*}
\var \colon o^*\Psi_f\bm1 \to o^!\Psi_f(-1).
\end{equation*}
In the context of singular and $\ell$-adic cohomology,
the classical Picard--Lefschetz formula provides a full answer
when $o$ is assumed to be a quadratic homogeneous singularity.
The $\ell$-adic formula is due to Deligne~\cite[Exp.~XV]{SGA7}.
Illusie~\cite{illusiePL} gives a purely algebraic proof,
without relying on transcendental methods.
Our proof is motivic,
and does not use any realization functors.
We also work with a general, (quasi-)homogeneous singularity.

Let us introduce some more notation.
Assume that $o$ is a homogeneous singularity defined by
a homogeneous polynomial $F$ of degree $r$.
Consider the complementary closed and open immersions
\begin{equation*}
C:= \{T_{n+1} = 0\}
\xhookrightarrow{\; \; i \; \;} D := V_{\P^{n+1}}(F-T_{n+1}^r)
\xhookleftarrow{ \; \; j \; \;} A := \{ T_{n+1} \neq 0 \}
.
\end{equation*}
We use $\alpha$ and $\beta$ to denote
the following natural maps
coming from localization sequences in $\DA(k)$:
\begin{equation*}
h(A) \xlongrightarrow{\alpha} h(C)(-1)[-1]
,
\quad
h(C)[-1] \xlongrightarrow{\beta} h_{\rom{c}}(A)
,
\end{equation*}
where $h(\blank)$ and $h_\rom{c}(\blank)$
denote the cohomological motive
and the cohomological motive with compact support,
respectively.
Our first main theorem is:

\begin{thm}[%
	The general Picard--Lefschetz formula,
	Thm.~\ref{thm:abstractPL}%
]\label{thm:general_intro}
Assume the characteristic of $k$ is different from $2$.
Let $X$ be regular and let
$f \colon X \to \Affl$
be a flat,
quasi-projective morphism.
Assume $f$ is smooth
except for an isolated homogeneous singularity
$o \in X_0$,
defined by a homogeneous polynomial $F$ of degree $r$.
With notation as above,
there are natural equivalences
\begin{equation*}
h(A) \simeq o^*\Psi_f\one, 
\quad
h_{\rom{c}}(A) \simeq o^! \Psi_f \one
,
\end{equation*}
such that the following diagram commutes:
\begin{equation*}\begin{tikzcd}
o^* \Psi_f \one
	\arrow[d, "\sim" {rotate=-90, anchor=north}]
	\arrow[rr, "-r \cdot \var"]
&& o^! \Psi_f \one(-1)
\\
h( {A} )	\arrow[r, "\alpha"]  	&	h(C) (-1)[-1]  \arrow[r, "\beta(-1)"]
&	h_{\rom{c}}(A)(-1)  \arrow[u, "\sim" {rotate=90, anchor=north}]
.
\end{tikzcd}\end{equation*}
\end{thm}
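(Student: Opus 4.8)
The plan is to reduce to a homogeneous model, compactify it into a degeneration of the smooth projective hypersurface $D$ to the projective cone over its hyperplane section $C$, compute the motivic nearby cycles of that degeneration by localisation (everything being smooth and proper away from the cone point), and then extract the monodromy and the factor $-r$. First, $o^{*}\Psi_{f}\one$, $o^{!}\Psi_{f}\one$ and $\var$ are local invariants of $f$ near $o$ — nearby cycles are local on the total space, and $o^{*},o^{!}$ are stalk and costalk functors — so one may replace $f$ by the model $F\colon\Affl^{n+1}\to\Affl$, $T\mapsto F(T)$, with $o=0$. Since the singularity is isolated, $D=V_{\P^{n+1}}(F-T_{n+1}^{r})$ is smooth, $C=D\cap\{T_{n+1}=0\}=V_{\P^{n}}(F)$ is a smooth hyperplane section of $D$, and in the chart $T_{n+1}=1$ the open complement $A=D\setminus C$ is the affine Milnor fibre $\{F=1\}\subseteq\Affl^{n+1}$.

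The key construction is the projective family
\begin{equation*}
\mathcal{D}:=V_{\P^{n+1}\times\Affl}\bigl(F-s^{r}T_{n+1}^{r}\bigr)\longrightarrow\Affl ,
\end{equation*}
with $s$ the base coordinate. Each fibre over $s\neq 0$ is isomorphic to $D$ (rescale $T_{n+1}$), the special fibre $\mathcal{D}_{0}=V_{\P^{n+1}}(F)$ is the projective cone over $C$ with vertex $o'=[0:\dots:0:1]$, and in the chart $T_{n+1}=1$ near $o'$ the family $\mathcal{D}$ is the $r$-th Kummer base change of $F$. By the base-change property of $\Psi$ along Kummer covers this gives $o^{*}\Psi_{f}\one\simeq{o'}^{*}\Psi_{\mathcal{D}}\one$ and $o^{!}\Psi_{f}\one\simeq{o'}^{!}\Psi_{\mathcal{D}}\one$, under which the monodromy of $f$ goes over to $\tfrac{1}{r}$ of the monodromy of $\mathcal{D}$; this is the source of the factor $r$, and reflects that $C$ is cut out in $D$ by $T_{n+1}$ while the function governing the monodromy restricts there to $T_{n+1}^{r}$. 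Since $\mathcal{D}\to\Affl$ is proper and smooth away from $o'$, with proper smooth generic fibre $D$, the complex $\Psi_{\mathcal{D}}\one$ restricts to $\one$ on $V:=\mathcal{D}_{0}\setminus\{o'\}$ and $R\Gamma(\mathcal{D}_{0},\Psi_{\mathcal{D}}\one)\simeq h(D)$; moreover $V$ is the total space of a line bundle over $C$ with zero section $C=\{T_{n+1}=0\}$, so $h(V)\simeq h(C)$ and $h_{\rom{c}}(V)\simeq h(C)(-1)[-2]$ by homotopy invariance and the Thom isomorphism.

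Applying the two localisation triangles on $\mathcal{D}_{0}$ for $\{o'\}$ and for $V$, and passing to global sections, then exhibits ${o'}^{*}\Psi_{\mathcal{D}}\one$ as the cofibre of a map $h_{\rom{c}}(V)\to h(D)$ and ${o'}^{!}\Psi_{\mathcal{D}}\one$ as the fibre of a map $h(D)\to h(V)$; unwinding, these maps are the Gysin map $h(C)(-1)[-2]\to h(D)$ and the restriction map $h(D)\to h(C)$. Comparing with the Gysin triangle $h(C)(-1)[-2]\to h(D)\to h(A)$ and the localisation triangle $h_{\rom{c}}(A)\to h(D)\to h(C)$ (both $D$ and $C$ being proper) yields $o^{*}\Psi_{f}\one\simeq h(A)$ and $o^{!}\Psi_{f}\one\simeq h_{\rom{c}}(A)$, compatibly with the triangles that define $\alpha$ and $\beta$. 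Finally $\var$ is built from the monodromy operator, so combining the factor $r$ from the Kummer comparison with the variation of the \emph{geometric} family $\mathcal{D}$ at $o'$ — which, for a degeneration of the proper smooth $D$ to the cone over the smooth $C$, is read off from the triangles just used — gives $-r\cdot\var=\beta(-1)\circ\alpha$ once the sign is pinned down. The hypothesis on the characteristic enters through the duality identifications involved here, and is in any case indispensable for the quadratic specialisation.

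The geometric ingredients are routine. The real content is twofold: the precise Kummer base-change statement for $\Psi$, including the claim that it multiplies the monodromy — and hence $\var$ — by $r$; and the identification of the abstractly defined variation $\var$ with the concrete composite $\beta(-1)\circ\alpha$, with exactly the constant $-r$ and the correct sign. I expect this last identification — translating the monodromy-theoretic variation into the geometry of the embedding $C\hookrightarrow D$ — to be the main obstacle.
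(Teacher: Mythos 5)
Your cone degeneration $\mathcal{D}=V(F-s^{r}T_{n+1}^{r})\to\Affl$ is a genuinely different route to the motive identifications than the paper takes. The paper passes through semistable reduction (Prop.~\ref{proposition:homogeneousssred}: $r$-th power base change, blowup at $o$, normalization) to obtain a \emph{regular} total space with a simple normal crossing special fibre; it then computes $o^{*}\Psi_{f}\one$ and $o^{!}\Psi_{f}\one$ via Prop.~\ref{proposition:compute nearby cycles} as $h(D^{\circ})$ and $h_{\rom{c}}(D^{\circ})$. Your argument instead keeps the singular cone model, uses properness to recover $R\Gamma(\mathcal{D}_{0},\Psi_{\mathcal{D}}\one)\simeq h(D)$, and peels off the vertex by the two localisation triangles; this is slicker in that it bypasses the whole semistable machinery, at the cost of working with a total space that is \emph{not} regular at $(o',0)$ (the paper's Prop.~\ref{prop:computability} requires semistability). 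Two preliminary points that need justification are: (i) the reduction from the abstract hypothesis of Def.~\ref{def:homogeneoussingularity} (where $f^{*}(t)\equiv F$ only modulo $m^{r+1}$) to the literal model $F\colon\Affl^{n+1}\to\Affl$ — the paper handles this through the semistable reduction theorem of \cite{Az}, whereas your appeal to ``nearby cycles are local'' needs to be upgraded to an étale-local statement covering higher-order perturbations; and (ii) that the two boundary maps produced by the localisation triangles really are the Gysin and restriction maps of the embedding $C\hookrightarrow D$ — plausible, but it is exactly the kind of compatibility the paper tracks carefully in Prop.~\ref{prop:triangle} and Prop.~\ref{prop:vargoodmorphism}.

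The genuine gap is in the last step. You correctly isolate the factor $r$ via the base-change relation $N_{\mathcal{D}}=r\cdot N_{f}$ (the paper's Prop.~\ref{prop:basechange}), but then assert that the variation of $\mathcal{D}$ at the cone point ``is read off from the triangles just used,'' with only ``the sign'' left to pin down. This is not a sign: it is the determination of an a priori unknown scalar $\lambda\in\Q$. The paper's Prop.~\ref{prop:sss_monodromy} shows precisely this — the vanishing Prop.~\ref{prop:neg_twist_vanishing} forces $\var$ to factor through $\one(-1)[-1]$, but only up to an undetermined rational constant. Nothing in the geometry of the degeneration to the cone, nor in the localisation triangles, fixes that constant: if one rescaled the monodromy operator $N$ by any $c\in\Q^{\times}$, every diagram you draw would still commute. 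Determining $\lambda=-1$ requires opening up the \emph{definition} of $N$ (Construction~\ref{const:monodromy}), and this is what the paper devotes all of \S\S~\ref{sec:reduction_to_kummer}--\ref{sec:kummer} to: reducing to the one-dimensional family $xy=t$, relating its nearby cycles to $\Psi_{\id}\mathcal{K}$, and computing the monodromy on the nearby Kummer motive via the algebra and derivation structure on $\Log^{\vee}$. You have identified this as ``the main obstacle,'' but the proposal contains no argument for it, and the cone picture alone cannot supply one. (Realisation to $\ell$-adic cohomology would, via Illusie's theorem and the paper's Rmk.~\ref{rmk:compute_param_via_real} — but then the proof is no longer purely motivic, which is the paper's stated goal.)
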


\begin{rmk}
For simplicity,
we only give the homogeneous case here,
but a similar statement
(Thm.~\ref{thm:abstractPL})
is valid in the generality
of quasi-homogeneous singularities
(see Def.~\ref{def:qhomogeneoussingularity}).
\end{rmk}
 
\paragraph{Proof strategy.}
Our proof of the theorem goes through a series of reductions.
For the first step, we show how to
replace $f$ by a semistable family
$g \colon Y \to \Affl$
with two branches,
similar to the approach of Illusie,
obtaining the map $g$ from $f$ by
base-change, blowup, and normalization.
We then have to keep track of how
nearby cycles motive
and the monodromy
changes as we replace $f$ in this way.
The outcome is
a computation of the motives
$o^* \Psi_f \one$
and $o^! \Psi_f \one$
as in the statement of Thm.~\ref{thm:general_intro}.
We also compute the variation,
$\var$,
but only up to
an \emph{a priori} unknown rational number $\lambda$.
This is the content of
\S~\ref{sec:ss_reduction}.

In order to determine $\lambda$,
we first replace the general semistable family $g$
by the one-dimensional family
\begin{equation*}
g \colon \Spec k[t,x,y]/(xy-t) \to \Affl,
\end{equation*}
(Prop.~\ref{prop:reduction_to_1dim}).
We then reduce computing the variation of this family
to computing the monodromy of the Kummer motive
$\mathcal{K} \in \DA(\mathbb{G}_m)$,
as it turns out that $\Psi_g \one$
is closely related to $\Psi_{\id}\mathcal{K}$.
This is the content of \S~\ref{sec:reduction_to_kummer}.

In the last step, we compute this monodromy, i.e.,
\begin{equation*}
N \colon \Psi_{\id}\mathcal{K} \to \Psi_{\id}\mathcal{K}(-1)
.
\end{equation*}
By opening up Ayoub's construction
of the motivic monodromy operator,
we reduce this to a concrete question about the Kummer and logarithm motives.
We then ascertain the value of $\lambda$ to be $-1$,
finishing the proof of
Thm.~\ref{thm:general_intro}.
This is the content of \S~\ref{sec:kummer}.

\paragraph{Quadratic singularities.}
Recall that the Picard--Lefschetz
formula classically
concerns the case where $o$
is a quadratic homogeneous singularity.
Under this assumption,
we can be more precise
by actually computing
the motives of $A$ and $C$
in Thm.~\ref{thm:general_intro}:

\begin{thm}[%
	The quadratic Picard--Lefschetz formula,
	Thm.~\ref{thm:quadratic_PL}%
]\label{thm:quadratic_intro}
Assume $k$ is algebraically closed
of characteristic different from $2$.\footnote{%
	It is probably true for any $k$ of characteristic different from $2$.
  This will be resolved in a later version of this text.%
}
Let $X$ be regular
and let
$f \colon X \to \Affl$
be a flat, quasi-projective morphism of relative dimension $n$,
smooth except for
an isolated non-degenerate quadratic singularity
$o \in X_0$.
There are canonical fibre sequences
\begin{equation*}
\bm1
\to o^*\Psi_f\bm1
\xrightarrow{m_1} \bm1(-\lceil n/2 \rceil)[-n],
\end{equation*}
and
\begin{equation*}
\bm1(-\lfloor n/2 \rfloor)[-n]
\xrightarrow{m_2} o^!\Psi_f\bm1
\to \bm1(-n)[-2n]
.
\end{equation*}
If $n$ is even, $\var$ is the zero map.
If $n=2m+1$ is odd,
then
$\var$ factors as
\begin{equation*}
o^* \Psi_f \one
\xrightarrow{m_1} \bm1(-m-1)[-n]
\xrightarrow{-1} \bm1(-m-1)[-n]
\xrightarrow{m_2} o^! \Psi_f \one(-1)
.
\end{equation*}
\end{thm}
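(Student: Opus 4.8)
The plan is to derive everything from the general Picard--Lefschetz formula, Theorem~\ref{thm:general_intro}, by computing its right-hand side explicitly. Since $k$ is algebraically closed of characteristic $\neq 2$ and $o$ is a non-degenerate quadratic singularity, a linear change of coordinates puts $F=Q$ in the split form in $n+1$ variables, so that $D=V_{\P^{n+1}}(Q-T_{n+1}^2)$ is a smooth \emph{split} projective quadric of dimension $n$, the closed subscheme $C=D\cap\{T_{n+1}=0\}=V(Q)$ is a smooth split projective quadric of dimension $n-1$ embedded in $D$ as a hyperplane section, and $A=D\setminus C$ is the smooth split affine quadric $\{Q=1\}$ of dimension $n$. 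The motives of split quadrics are classical: $h(C)$ and $h(D)$ are direct sums of Tate twists $\one(-i)[-2i]$ for $0\le i\le\dim$, with the middle twist appearing with multiplicity two precisely when the dimension is even; and $h(A)\simeq\one\oplus\one(-\lceil n/2\rceil)[-n]$, the split affine quadric of dimension $n$ having the motive of $\mathbb{A}^{\lceil n/2\rceil}\setminus\{0\}$ when $n$ is odd and reduced motive $\one(-n/2)[-n]$ when $n$ is even. By Poincaré duality $h_{\mathrm{c}}(A)\simeq h(A)^\vee(-n)[-2n]\simeq\one(-\lfloor n/2\rfloor)[-n]\oplus\one(-n)[-2n]$. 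All the relevant $\mathrm{Hom}$- and $\mathrm{Ext}$-groups between distinct Tate pieces here vanish for weight reasons, so the two fibre sequences of the statement are forced: they are the canonical ones $\one\to h(A)\to\tilde h(A)$ (unit and reduced motive) and its Poincaré dual, and the arrows $m_1,m_2$ are determined up to an overall scalar which we fix once and for all.

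Next I identify the maps $\alpha$ and $\beta$ of Theorem~\ref{thm:general_intro}. The morphism $\alpha\colon h(A)\to h(C)(-1)[-1]$ is the connecting map of the Gysin triangle $h(C)(-1)[-2]\to h(D)\to h(A)\xrightarrow{\alpha}h(C)(-1)[-1]$ attached to the smooth codimension-one closed immersion $C\hookrightarrow D$, while $\beta\colon h(C)[-1]\to h_{\mathrm{c}}(A)$ is the connecting map of the localization triangle $h_{\mathrm{c}}(A)\to h(D)\to h(C)\to h_{\mathrm{c}}(A)[1]$; applying Poincaré duality to the second triangle returns the first, so $\beta$ is the dual of $\alpha$ up to the appropriate twist and shift, and it is enough to determine $\alpha$. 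Since the four motives in play are sums of Tate twists and all maps between them are induced by algebraic cycles --- fundamental classes of linear subspaces on the quadrics and their intersection products --- $\alpha$ is given in the evident bases by an integer matrix read off from classical intersection numbers on split quadrics. In particular $\alpha$ kills the unit summand $\one\subseteq h(A)$ for weight reasons, so $\alpha$, and hence $\var=-\tfrac12\,\beta(-1)\circ\alpha$, is determined by the restriction of $\alpha$ to the reduced motive $\tilde h(A)=\one(-\lceil n/2\rceil)[-n]$.

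Now the dichotomy. When $n$ is even, $D$ is an even-dimensional quadric, and restricting one of its two ruling classes (a maximal linear subspace) to the affine part $A$ yields a generator of $\tilde h(A)$; thus $\tilde h(A)$ lies in the image of the restriction map $h(D)\to h(A)$ and is therefore annihilated by the connecting map $\alpha$, so $\alpha=0$ and $\var=0$, as claimed. When $n=2m+1$ is odd, the odd quadric $D$ has no summand $\one(-m-1)[-n]$, while $C$ is an even-dimensional quadric whose middle cohomology is spanned by its two ruling classes $\ell_1,\ell_2$ (each an $m$-dimensional linear subspace). Chasing the Gysin long exact sequence, $\alpha$ sends a generator of $\tilde h(A)=\one(-m-1)[-n]$ to $\ell_1-\ell_2$ inside the doubled Tate summand $\one(-m-1)[-n]^{\oplus2}$ of $h(C)(-1)[-1]$: this is the kernel of the pushforward $i_*\colon H^{2m}(C)\to H^{2m+2}(D)$, which carries both $\ell_1$ and $\ell_2$ to the class of an $m$-dimensional linear subspace of $D$. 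Dually, $\beta$ sends $\ell_1-\ell_2$ to twice a generator of the summand $\one(-m)[-n]$ of $h_{\mathrm{c}}(A)$ --- the one through which $m_2$ factors --- because $\ell_1+\ell_2$ is the restriction of the hyperplane power from $D$ and hence spans the kernel of $\beta$. Therefore $\beta(-1)\circ\alpha=2\,m_2(-1)\circ m_1$ as a map $o^*\Psi_f\one\to o^!\Psi_f\one(-1)$, and Theorem~\ref{thm:general_intro} (with $r=2$) gives $\var=-\tfrac12\,\beta(-1)\circ\alpha=-\,m_2(-1)\circ m_1$, the asserted factorization through multiplication by $-1$.

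The main difficulty I anticipate is bookkeeping rather than conceptual: pinning down the intersection-theoretic constants --- the images of the ruling classes under $i_*$ and $i^*$, and compatible normalizations of the generators of $\tilde h(A)$ and of the $\one(-\lfloor n/2\rfloor)[-n]$-summand of $h_{\mathrm{c}}(A)$ --- so that the scalar comes out exactly as $2$ and not merely $\pm2$, and so that the identifications $m_1,m_2$ match up with Poincaré duality. Concretely this rests on the classical description of the Chow ring of a split quadric in terms of its linear subspaces, in particular the relation $h^{d+1}=2\,\ell_d$ on the split quadric of dimension $2d+1$, together with the compatibility of the Gysin triangle with its Poincaré dual. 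A secondary, easier point is to verify that the splittings extracted in the first step are genuinely the canonical fibre sequences of the statement, i.e.\ that $\one\to h(A)$ is the unit and $h_{\mathrm{c}}(A)\to\one(-n)[-2n]$ its dual; this is immediate once one notes that the obstructing $\mathrm{Hom}$- and $\mathrm{Ext}$-groups vanish.
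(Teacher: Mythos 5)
Your proposal is correct and takes essentially the same route as the paper: derive everything from the general formula (Thm.~\ref{thm:general_intro} with $r=2$) by computing the motives of the split quadrics $D$, $C$, $A$ and then working out the connecting maps $\alpha$, $\beta$ explicitly so that $\beta(-1)\circ\alpha = 2\,m_2(-1)\circ m_1$, giving $\var = -m_2(-1)\circ m_1$. The only cosmetic difference is that you phrase the computation of $\alpha$ and $\beta$ directly in terms of the classical Chow ring of a split quadric (ruling classes, $h^{m}\mapsto\ell_1+\ell_2$, kernel of the Gysin pushforward being the antidiagonal $\ell_1-\ell_2$), whereas the paper reaches the same matrices via Rost's inductive motivic decomposition of quadrics and the explicit description of $i^*$ from Bachmann; these are two presentations of the same intersection-theoretic input, and the final sign/scalar bookkeeping you flag as the anticipated difficulty is exactly the content of the paper's proof of Prop.~\ref{prop:motiveaffinequadric} and Thm.~\ref{thm:quadratic_PL}.
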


Let the singularity at $o$
be defined by a quadratic homogeneous polynomial $F$
defining a non-degenerate quad\-rat\-ic form.
Recall the varieties $D$, $C$, and $A$ from above.
Then $D$ and $C$ are smooth projective quadrics,
and $A$ is an affine quadric.
We compute their motives based on
Rost's work on the Chow motives of quadrics~\cite{Rost}.
Combined with Thm.~\ref{thm:general_intro},
this yields Thm.~\ref{thm:quadratic_intro}.
This is the content of \S~\ref{sec:quadratic},
the final section.

\paragraph{Historical context.}
The formula originates in complex geometry,
in the work of Picard~\cite{Picard}
for a holomorphic function on a surface,
and Lefschetz~\cite{Lefschetz}
for higher dimensional manifolds.
In~\cite[\'expos\'e XV]{SGA7},
Deligne proves an algebro-geometric formula
in the setting of étale cohomology.
This result plays a crucial role in
his proof of the Weil conjectures.
While Deligne's proof uses transcendental methods,
a purely algebraic proof for the formula
is given by Illusie~\cite{illusiePL}.

\begin{rmk}
It has been brought to our attention that the problem
of motivically enhancing the Picard--Lefschetz formula
was considered independently by Roland Casalis,
a former graduate student of Fr\'ed\'eric D\'eglise,
with no results published.
\end{rmk}

\paragraph{Acknowledgements.}
We would like to thank Joseph Ayoub,
whose ideas, advice and support have been invaluable.
At the start of this project,
both authors were supported
by the SNF project Motives and Algebraic Cycles
(grant no.\ 178729).
The first author also acknowledges support by
the project Foundations of Motivic Real K-Theory
(ERC grant no.\ 949583).
The second author was also supported by
Dan Petersen's Wallenberg Scholar fellowship.


\section{Preliminaries}

\subsection{Notations and conventions}

\begin{notation}\label{notation:fam_over_A1}
Let $\pt$ denote $\Spec(k)$.
We write $\Affl$ for the affine line
$\mathbb{A}^1_k = \Spec k[t]$
over $k$,
and similarly $\Gm \subseteq \Affl$
for the multiplicative group scheme
$\mathbb{G}_{\rom{m},k}$.
Their structure morphisms are denoted by
$p \colon \Affl \to \pt$ and
$q \colon \Gm \to \pt$, respectively.
For a scheme $f \colon X \to \Affl$ over the affine line,
we write $X_\eta := X \times_{\Affl} \Gm$ and $X_\sigma := X \times_{\Affl} \pt$,
where the map $\pt \to \Affl$ is given by the origin.
Moreover,
we denote the base changed morphisms by
$f_\eta \colon X_\eta \to \Gm$, and
$f_\sigma \colon X_\sigma \to \pt$,
as in the following diagram:
\begin{equation*}
\begin{tikzcd}
X_{\sigma} \arrow[r, hook, "i"] \arrow[d, "f_\sigma" ']
& X  \arrow[d, "f"]
& X_{\eta} \arrow[l, hook', "j" '] \arrow[d, "f_{\eta}"]
\\
\pt \arrow[r, hook]
& \Affl
& \Gm \arrow[l, hook']
\end{tikzcd}
\end{equation*}
\end{notation}


\subsection{%
	Singularities and 
	semi\-stable reduction%
}

\begin{defn}
	\label{definition:sss}
Let $f\colon X \to \Affl$ be a flat morphism, with $X$ regular.
\begin{enumerate}
\item
We say that $f$ is \emph{semistable} if the special fibre $X_\sigma$ is a simple normal crossing divisor, in particular, reduced.
\item
We say that $f$ is \emph{special semistable} if it is semistable,
and in addition the special fibre $X_\sigma$ is given as the union of two smooth divisors.
\end{enumerate}
\end{defn}

\begin{defn}\label{definition:ssr}
Let $f\colon X \to \Affl$ be a morphism.
Let $r \colon \Affl \to \Affl$ be
the $r$th power map,
for $r$ a natural number.
We also use $r$ to denote
the base change morphism
$X_r := X\times_{\Affl,r}\Affl \to X$
and $f_r \colon X_r \to \Affl$
to denote the base change of $X \to \Affl$.
We say that $f$
\emph{admits (special) semistable reduction}
if
there is a natural number $r$ 
and a proper map
$\pi \colon Y \to X_r $
such that
$g := f_r \circ \pi \colon Y \to \Affl$
is a (special) semistable morphism.
\end{defn}

\begin{defn} \label{definition:goodmorphism}
Let $f \colon X \to \Affl$
be a morphism that admits special semistable reduction
\begin{equation*}
g \colon Y \xrightarrow{\pi} X_r \xrightarrow{f_r} \Affl,
\end{equation*}
as in Def.~\ref{definition:ssr}.
Assume furthermore that
the special fibre $X_\sigma$
has one isolated singular point $o \in X_\sigma$,
and that $(\pi \circ r)^{-1}(o) =: D$
is a proper smooth branch in
$Y_\sigma$.
In this case,
we say that $f$ is a \emph{family with a good singularity}.
\end{defn}

This definition suffices for the semistable reduction argument
in \S~\ref{sec:ss_reduction}.
Its conditions are satisfied in the case we are interested in,
namely that of a homogeneous singularity,
and more generally of a quasi-homogeneous singularity.

\begin{defn}\label{def:homogeneoussingularity}
Let $f\colon X \to \Affl = \Spec k[t]$ be
a flat, quasi-projective morphism,
let
$F \in k[T_0,\dotsc,T_n]$
be a homogeneous polynomial of degree $r$,
with $(r,\operatorname{char} k)=1$.
We say that a
singular
point of the special fibre, $o \in X_\sigma$, is
\emph{a homogeneous singularity defined by $F$,}
if
the hypersurface $V(F) \subset \P_k^n$ is smooth,
and we have
\[ f^*(t) = F(x_0, \dotsc, x_n) \]
in the local ring $\OO_{X,p}$ modulo $m^{r+1}$,
where $(x_0, \dotsc, x_n)$ is a regular sequence
generating the maximal ideal $m \subset \OO_{X,p}$.
\end{defn}

\begin{rmk}
In the case $r=2$,
this is an ``ordinary quadratic singularity''
as defined in~\cite[Exp.~XV, Def.~1.2.1]{SGA7}.
\end{rmk}

Homogeneous singularities are good,
in the sense of Def.~\ref{definition:goodmorphism}:

\begin{prop}\label{proposition:homogeneousssred}
Let $f\colon X \to \Affl$ be
a flat, quasi-projective morphism.
Suppose that $X_\sigma$ has
a single homogeneous singularity $o$
defined by $F \in k[T_0, \dotsc, T_n]$ of degree $r$.
\begin{enumerate}
\item
We have that $f$ is a family with a good singularity.

\item
Special semistable reduction is given by $r$th power base change,
followed by blowup at $o$ and normalization.

\item
Zariski locally around $o$,
the special semistable morphism
$g \colon Y \to (\Affl)_r$
has special fibre
$Y_\sigma = D_1 \cup D_2$,
where
$D_1 \simeq V_{\P_k^{n+1}}(F-T_{n+1}^r)$,
$C \simeq V_{\P_k^n}(F)$,
and $D_2$ is the strict transform of $X_\sigma$.
\end{enumerate}
\end{prop}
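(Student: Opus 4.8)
The plan is to work Zariski-locally around $o$, where by Definition~\ref{def:homogeneoussingularity} the morphism $f$ looks like $t = F(x_0,\dots,x_n)$ for a regular system of parameters $(x_0,\dots,x_n)$, with $V(F)\subset\P^n_k$ smooth. Since $X$ is regular and all three claims are local and compatible with the étale (or even Zariski) topology, I may replace $X$ by $\Spec \OO_{X,o}$, or even by an affine space with $f$ given exactly by the homogeneous polynomial $F$; the ``modulo $m^{r+1}$'' clause should be absorbed by an elementary deformation-to-the-normal-cone / completion argument showing the blowup and normalization only depend on the $r$-jet of $f$ at $o$. First I would perform the $r$-th power base change $t\mapsto s^r$, giving $X_r=\Spec\bigl(\OO_X[s]/(s^r-F)\bigr)$, and then blow up the reduced point $o\in X_r$ (equivalently blow up the ideal $(x_0,\dots,x_n,s)$). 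In the chart where $s$ is the dominant coordinate one gets $x_i = s\,u_i$, and the equation $s^r = F(su_0,\dots,su_n) = s^r F(u_0,\dots,u_n)$ becomes, after dividing by $s^r$, the equation $1 = F(u_0,\dots,u_n)$ together with the exceptional locus $s=0$; in the charts where some $x_j$ dominates one sees the strict transform of $X_\sigma=\{F=0\}$ meeting the exceptional divisor along $C=V_{\P^n}(F)$. The key computation is that the blown-up total space is \emph{not} yet normal along the exceptional divisor (the local rings acquire an $r$-th root), and that its normalization is obtained by adjoining $T_{n+1}:=s/\ell$ for a linear form $\ell$, producing precisely the projective hypersurface $D_1 = V_{\P^{n+1}}(F-T_{n+1}^r)$ as the component of the special fibre lying over $o$.

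The key steps, in order: (i) reduce to the local model $t=F$, using regularity of $X$ and base-change/blowup compatibility with completion, and verify that nothing depends on terms of $f$ in $m^{r+1}$; (ii) compute $X_r$ and its singular locus — it is singular exactly along $\{s=0, x_0=\dots=x_n=0\}$, i.e.\ over $o$; (iii) compute the blowup $\mathrm{Bl}_o X_r$ chart by chart, identifying the two components of its special fibre as the (projectivized) affine cone $\{s^r=F\}$-type piece over $o$ and the strict transform $D_2$ of $X_\sigma$, and checking that $D_2$ is smooth because $V(F)$ is; (iv) compute the normalization of $\mathrm{Bl}_o X_r$, showing it is regular, that its special fibre is reduced with simple normal crossings (so $g$ is special semistable), that the component over $o$ is $D_1\simeq V_{\P^{n+1}}(F-T_{n+1}^r)$ — smooth since $V(F)$ is smooth and $\gcd(r,\operatorname{char}k)=1$ — and that $D_1\cap D_2 = C\simeq V_{\P^n}(F)$; (v) assemble: $(\pi\circ r)^{-1}(o)=D_1$ is a proper smooth branch, giving part (1), while parts (2) and (3) are the content of (iii)–(iv). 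Smoothness of $D_1$ and the snc condition at $C$ use the Jacobian criterion applied to $F-T_{n+1}^r$ and the coprimality hypothesis.

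The main obstacle I expect is step (iv), the normalization computation: one has to exhibit the integral closure explicitly and then prove the resulting scheme is regular with the stated snc special fibre, rather than merely normal. Concretely, after blowup the local ring along the exceptional divisor over $o$ is something like $k[s,u_0,\dots,u_n]/(s^r - s^r F(u))$ localized — the issue is that dividing the equation by $s^r$ is exactly the non-flat, non-normal phenomenon, and one must track the finite extension adjoining $s/\ell$ carefully, checking it is the full normalization (e.g.\ via Serre's criterion $R_1+S_2$, or by a direct valuation-theoretic argument at the generic point of the exceptional divisor) and that the total space becomes regular there. A secondary subtlety is step (i): making precise that the blowup-then-normalize operation is insensitive to the $m^{r+1}$-ambiguity in $f$, which I would handle by passing to completions $\widehat{\OO}_{X,o}$, where $f$ becomes formally equivalent to $F$ after a change of coordinates absorbing higher-order terms, and invoking that blowup, normalization, regularity and snc are all detected on the completion.
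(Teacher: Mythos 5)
The paper's own proof is a bare citation: to~\cite[Prop.~2.4]{illusiePL} when $V_{\P^{n+1}}(F-T_{n+1}^r)$ is smooth, and to~\cite[Thm.~4.3]{Az} in general. You are supplying the chart computation behind that citation, which is a perfectly reasonable alternative route. However, the step you flag as the ``main obstacle'' --- that $\operatorname{Bl}_o X_r$ is \emph{not yet normal} along the exceptional divisor and must be normalized by adjoining $T_{n+1}=s/\ell$ --- is incorrect on your own terms. You chose to blow up the \emph{reduced} point, i.e.\ the ideal $(s, x_0,\dots,x_n)$, and in that case the strict transform is already regular. In the $s$-chart (coordinates $s, u_i=x_i/s$) it is cut out by $1 - F(u) - s\tilde h(s,u)$, where $h(su)=s^{r+1}\tilde h$; Euler's relation $\sum u_i\partial_i F = rF$ with $r$ invertible shows $\nabla F$ cannot vanish on $V(F-1)$, so this hypersurface is regular along $s=0$. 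In the $x_j$-chart (coordinates $x_j, v=s/x_j, w_i=x_i/x_j$) it is cut out by $v^r - F(w) - x_j\tilde h$; the Jacobian criterion, using $(r,\operatorname{char}k)=1$ when $v\neq 0$ and the smoothness of $V(F)\subset\P^n$ together with Euler when $v=0$, shows it is regular, with $D_1=\{x_j=0\}$ and $D_2=\{v=0\}$ crossing transversally in $C=V(F)$. In particular the normalization in the statement acts as the identity here. Note also that the coordinate $s/\ell$ you propose to ``adjoin'' is not new: with $\ell=x_j$ it \emph{is} the coordinate $v$ already present in the $x_j$-chart of the blowup.

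The source of the slip is visible in your obstacle paragraph, where you describe the local ring along the exceptional divisor as $k[s,u_0,\dots,u_n]/(s^r - s^r F(u))$. That is the \emph{total} transform of $X_r$ under $\operatorname{Bl}_o\Aff^{n+2}\to\Aff^{n+2}$, a non-reduced scheme with a multiplicity-$r$ component along $s=0$; the blowup $\operatorname{Bl}_o X_r$ is the \emph{strict} transform, obtained by dividing out $s^r$ first, i.e.\ $k[s,u]/(1-F(u)-s\tilde h)$. Once that distinction is restored, the ``$r$-th root'' you were trying to extract is already there, and the normalization claim dissolves. (The ``and normalization'' in the statement is there to cover the quasi-homogeneous case of Prop.~\ref{proposition:qhomogeneousssred}, where one performs a weighted blowup and normalization is genuinely non-trivial; it also makes the statement robust if one instead blows up the non-reduced fibre of $o$ in $X_r$.) Your step~(i) --- absorbing the $m^{r+1}$-ambiguity by completion --- is sound, but as the computation above shows, you can also simply carry $\tilde h$ through all charts without a preliminary coordinate change, since it never interferes with regularity or the SNC condition along the exceptional locus.
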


\begin{proof}
Under the assumption that
$V_{\P^{n+1}}(F-T_{n+1}^r)$
is smooth, this is~\cite[Prop.~2.4]{illusiePL}.
For the general case,
see~\cite[Thm.~4.3]{Az}.
The statement in~[loc. cit.]
is formulated for a family over a discrete valuation ring,
but the same proof works,
as we can restrict $f$ to $\Spec k[t]_{(t)}$.
\end{proof}

We now turn to the larger class of \emph{quasi-homogeneous singularities}, where our definition requires some technical restrictions, but they are still \emph{good} in the sense of Def.~\ref{definition:goodmorphism}.

\begin{defn}\label{def:qhomogeneoussingularity}
	Let
	$\underline{a}=(a_1, \dotsc, a_n)$
	be a vector of natural numbers and let $\P(\underline{a})$ be the weighted homogeneous space $\operatorname{Proj} k[T_0, \dotsc, T_n]$, with the ring graded by letting $T_i$ have degree $a_i$. Let $F \in k[T_0, \dotsc, T_n]$ be an $\underline{a}$-weighted homogeneous polynomial of degree $r$. It defines a hypersurface $V(F)$ in the weighted homogeneous space $\P(\underline{a})$. We require that the weights $a_i$ are pairwise relatively prime, each $a_i$ divides $r$, and $r$ is prime to the exponential characteristic of $k$.
	Let
	$G(y_0, \dotsc, y_n) := F(y_0^{a_0}, \dotsc, y_n^{a_n})$
	and assume that both $V(G) \subset \P_k^n$, and $V(F) \subset \P_k(\underline{a})$ are smooth. Furthermore, letting $v_i\in   \P_{k(p)}(a)$ be the point with the $i$-th homogeneous coordinate 1, and all other coordinates 0,  we require that $F(v_i)\neq0$ if $a_i>1$ (this last condition is superfluous in the case $n > 1$, see \cite[Rmk.~5.4]{Az}).
	
	Let $f\colon X \to \Spec k[t]$ be a flat, quasi-projective morphism.	We say that a singular point $o \in X_\sigma$ is \emph{an $\underline{a}$-weighted homogeneous singularity defined by $F$}, if in the local ring $\OO_{X,p}$, we can write
	\[ f^*(t) = F(x_0, \dotsc, x_n) + h,\]
	where $\underline{x}=(x_0, \dotsc, x_n)$ is a regular sequence generating the maximal ideal $m \subset \OO_{X,p}$, and $h \in m \cdot m_{\underline{a}}^{(r)}$, where $m_{\underline{a}}^{(r)}$ is the ideal generated by monomials in $\underline{x}$ of $\underline{a}$-weighted degree $r$. For more details see~\cite[Def.~5.1, Def.~5.3]{Az}.
\end{defn}

\begin{prop}\label{proposition:qhomogeneousssred}
Let $f\colon X \to \Aa^1_k$ be
a flat, quasi-projective morphism.
Suppose that $X_\sigma$ has a single quasi-homogeneous singularity $o$
defined by $F \in k[T_0, \dotsc, T_n]$ of degree $r$.
Then $f$ is a family with good singularity,
and it admits special semistable reduction
$g \colon Y \to \Affl$,
where $g = f_r \circ \pi$,
$f_r \colon X_r \to \Aff^1$
is $r$th power base change of $f$,
and $\pi \colon Y \to X_r$ is proper.
We have $Y_\sigma = D_1 \cup D_2$, with
$D_1 \simeq V_{\P_k(\underline{a},1)}(F-T_{n+1}^r)$,
and
$C \simeq V_{\P_k(\underline{a})}(F)$.
\end{prop}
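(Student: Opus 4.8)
The plan is to reduce the quasi-homogeneous case to the homogeneous one already recorded in Proposition~\ref{proposition:homogeneousssred}, with the technical verifications being the content of \cite[\S 5]{Az}. Since $o$ is the unique singular point of $X_\sigma$ and every assertion in the statement is Zariski-local around $o$ (away from $o$ the map $f$, and then also $g$, is smooth, hence trivially semistable), I may shrink $X$ and assume $f^*(t)=F(x_0,\dots,x_n)+h$ with $\underline x=(x_0,\dots,x_n)$ a regular system of parameters of $\OO_{X,o}$ and $h\in m\cdot m_{\underline a}^{(r)}$; as in the homogeneous case the term $h$, being of higher weighted order, does not affect the relevant part of the semistable model, so I suppress it in the sketch. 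The key point is that $G(\underline y):=F(y_0^{a_0},\dots,y_n^{a_n})$ is honestly homogeneous of degree $r$, and that $[y_0:\dots:y_n]\mapsto[y_0^{a_0}:\dots:y_n^{a_n}]$ exhibits $\P^n_k$ as a $\prod_i\mu_{a_i}$-cover of $\P_k(\underline a)$ carrying $V(G)$ onto $V(F)$ — this is precisely why Definition~\ref{def:qhomogeneoussingularity} demands smoothness of both $V(G)\subset\P^n_k$ and $V(F)\subset\P_k(\underline a)$.

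Concretely, I would pass to the cover $\widetilde X\to X$ obtained locally by extracting roots $x_i=z_i^{a_i}$, which is again regular; on it the family acquires a homogeneous singularity with $\widetilde f^*(t)=G(z_0,\dots,z_n)$. Proposition~\ref{proposition:homogeneousssred} then applies to $\widetilde f$: it is a family with a good singularity, its special semistable reduction is $r$th power base change followed by blowup at the singular point and normalization, and the resulting special fibre is $\widetilde D_1\cup\widetilde D_2$ with $\widetilde D_1\simeq V_{\P^{n+1}_k}(G-T_{n+1}^r)$ and $\widetilde C\simeq V_{\P^n_k}(G)$. Taking the quotient by $\prod_i\mu_{a_i}$, acting through the first $n+1$ coordinates, recovers the sought semistable model $g\colon Y\to\Affl$ of $f$; since $G-T_{n+1}^r$ descends to $F-T_{n+1}^r$ under $z_i\mapsto z_i^{a_i}$, this yields $D_1\simeq V_{\P_k(\underline a,1)}(F-T_{n+1}^r)$ and $C\simeq V_{\P_k(\underline a)}(F)$. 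Properness of $\pi\colon Y\to X_r$ is immediate, being built from blowups, normalizations and finite quotients; equivalently, one may describe $\pi$ directly as a suitable weighted blowup followed by normalization.

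The main obstacle is checking that, after this quotient, $Y$ is still regular and $Y_\sigma=D_1+D_2$ is reduced with simple normal crossings, i.e. controlling the geometry at the finitely many singular points $v_i$ of the weighted projective spaces $\P_k(\underline a)$ and $\P_k(\underline a,1)$; this is exactly what forces the remaining hypotheses of Definition~\ref{def:qhomogeneoussingularity}. Away from those orbifold points the weighted spaces are smooth and Illusie's argument goes through essentially verbatim. At a point $v_i$ with $a_i>1$, pairwise coprimality of the $a_j$ makes the relevant isotropy cyclic, the condition $a_i\mid r$ makes the $r$th power base change interact cleanly with the weights, and the requirement $F(v_i)\neq0$ forces $D_1$ and $C$ to avoid the worst orbifold strata, so that the local rings of $Y$ along $D_1$ remain regular. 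Since this local analysis is precisely what is carried out in \cite[\S 5]{Az} (the weighted counterpart of \cite[Thm.~4.3]{Az} invoked in Proposition~\ref{proposition:homogeneousssred}), in the write-up I would record the reduction and the cover construction above, note the identifications of $D_1$, $D_2$, and $C$, and cite \cite{Az} for the regularity and smoothness statements.
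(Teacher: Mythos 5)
The paper's own proof is a one-line citation to \cite[Thm.~5.7]{Az}, which (per the paper's remark) constructs the semistable model directly by a weighted blowup. Your sketch takes a genuinely different route: pass to the finite cover $\widetilde X\to X$ extracting roots $x_i=z_i^{a_i}$, apply the already-established homogeneous case (Prop.~\ref{proposition:homogeneousssred}) to $\widetilde f$ with polynomial $G$, and then quotient the resulting semistable model $\widetilde Y$ by $\prod_i\mu_{a_i}$. Your verification that $\widetilde f$ really has a homogeneous singularity defined by $G$ is correct (one checks that $h\in m\cdot m_{\underline a}^{(r)}$ pulls back into $\tilde m^{r+1}$), and the identification of the strata $D_1\simeq V_{\P(\underline a,1)}(F-T_{n+1}^r)$ and $C\simeq V_{\P(\underline a)}(F)$ under the quotient map is the expected one.

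There is, however, a genuine gap between what you assert and what you cite. Your strategy needs two facts that do not come for free: (i) the quotient $\widetilde Y/\!\prod_i\mu_{a_i}$ is again \emph{regular} with simple normal crossing special fibre (quotients of regular schemes by finite groups are not regular in general, even for cyclic groups of order prime to the characteristic), and (ii) this quotient coincides with the semistable model that \cite[\S 5]{Az} actually produces, so that the regularity statements there apply. Since \cite{Az} constructs $Y$ directly as a weighted blowup and does not go through your cover-quotient picture, you cannot simply cite it for (i); you would first have to prove (ii), i.e.\ show that the $\mu_{\underline a}$-quotient of the ordinary blowup of the $\mu_{\underline a}$-cover agrees (after normalization) with the weighted blowup used in \cite{Az}, and that the hypotheses of Def.~\ref{def:qhomogeneoussingularity} --- pairwise coprime weights, $a_i\mid r$, $F(v_i)\neq 0$ --- imply that the quotient stays regular at the orbifold points. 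You gesture at this as ``the main obstacle'' but then close the loop by appealing to \cite{Az}, which only works after (ii) is established. If you want to keep the cover-quotient presentation, add a lemma comparing the two constructions; otherwise, the cleaner move is the paper's: cite \cite[Thm.~5.7]{Az} directly for the weighted-blowup model, and use your cover picture, if at all, only as informal motivation.
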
	

\begin{proof}
This follows from \cite[Thm.~5.7]{Az}.
While the construction of $g$ is more involved than in
Prop.~\ref{proposition:homogeneousssred}
(replacing the blowup by a certain construction of a weighted blowup),
it still satisfies the statement of the proposition.
\end{proof}

\begin{notation}\label{notation:ss}
Let $f\colon X \to \Affl$ be a semistable morphism
and $X_\sigma = \sum_i D_i$
where $D_i$ are smooth, irreducible divisors.
Let $C := \bigcap_i D_i$ denote the intersection,
and let $D_i^\circ := D_i \setminus \bigcup_{j\neq i}D_j$.
We use the following notation for some natural inclusion maps:
\begin{equation*}\begin{tikzcd}
C \ar[dr, bend right=16, "c_i" '] \ar[r, "c_{0,i}"] & D_i \ar[d, "u_i"] & D_i^\circ \ar[l, "v_i" '] \\
& X_\sigma
\end{tikzcd}\end{equation*}
\end{notation}


\subsection{Motives}

Let for a variety $X$ over $k$,
let $\DA(X)$ denote the category
$\DA^\rom{et}(X,\mathbb{Q})$
of étale motivic sheaves over $X$ with rational coefficients,
see for example~\cite[\S~3]{ayoub_etale}.
These are the motivic categories we work with
throughout the paper.

Recall that these categories allow for a six-functor formalism.
We use this structure throughout the paper.

\begin{notation}
Let $f \colon X \to S$ be a morphism.
We use the following notation for the cohomological motive
and the motive with compact support of $X$ over $S$, respectively:
\[
h_S(X) := f_* f^* \one_S,
\quad h^\rom{c}_S(X) := f_! f^* \one_S
\] 
(as objects in $\DA(S)$).
\end{notation}

This basic vanishing result is used numerous times in the paper:

\begin{prop}[Voevodsky, see~{\cite[Hyp.~3.6.47]{ayoub_thesis_2}}]
\label{prop:neg_twist_vanishing}
Let $i,j,k,n$ be integers and suppose $n>0$.
Then, for $X$ smooth, we have
\begin{equation*}
\Hom_{\DA(X)}(\bm1(i)[j], \bm1(i-n)[k]) = 0.
\end{equation*}
\qed
\end{prop}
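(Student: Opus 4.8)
This vanishing is foundational and is attributed above to Voevodsky; the plan is to reduce it to the vanishing of motivic cohomology of smooth schemes in negative weight. First I would note that twisting by $\bm1(-i)$ and shifting by $[-j]$ are autoequivalences of $\DA(X)$, which reduces the claim to showing
\begin{equation*}
\Hom_{\DA(X)}\bigl(\bm1,\, \bm1(-n)[\ell]\bigr) = 0
\end{equation*}
for every integer $\ell$ and every $n>0$. Then I would descend to the base field: writing $\pi\colon X \to \pt$ for the structure morphism, which is smooth by hypothesis, the functor $\pi^{*}$ admits a left adjoint $\pi_{\#}$ with $\pi_{\#}\bm1_{X} = M(X)$ the (effective) homological motive of $X$. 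Since $\pi^{*}$ is monoidal and $\bm1_{X}(-n)[\ell] = \pi^{*}\bigl(\bm1_{\pt}(-n)[\ell]\bigr)$, adjunction identifies the group above with $\Hom_{\DA(\pt)}\bigl(M(X),\, \bm1_{\pt}(-n)[\ell]\bigr)$, which, under the comparison between étale motives with rational coefficients and Voevodsky's $\mathbf{DM}(k,\mathbb{Q})$, is the motivic cohomology group $H^{\ell}\bigl(X,\mathbb{Q}(-n)\bigr)$ of $X$ in the negative weight $-n$.

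The remaining point is thus that motivic cohomology of a smooth $k$-scheme vanishes in negative weight. I would twist by the invertible object $\bm1(n)$ to rewrite this as $\Hom_{\mathbf{DM}(k,\mathbb{Q})}\bigl(M(X)(n),\, \bm1[\ell]\bigr) = 0$ for $n>0$. Both objects are effective, so by Voevodsky's cancellation theorem — i.e.\ the full faithfulness of $\mathbf{DM}^{\mathrm{eff}}(k,\mathbb{Q}) \hookrightarrow \mathbf{DM}(k,\mathbb{Q})$ — the computation may be carried out in the effective category. Using $\bm1(1)[1] \simeq \widetilde{M}(\Gm)$ and $M(\Gm) \simeq \bm1 \oplus \widetilde{M}(\Gm)$, the object $M(X)(n)[n]$ is the direct summand $M(X) \otimes \widetilde{M}(\Gm)^{\otimes n}$ of $M(X \times \Gm^{n})$, so $\Hom_{\mathbf{DM}^{\mathrm{eff}}}\bigl(M(X)(n), \bm1[\ell]\bigr)$ becomes a direct summand of $H^{\ell+n}_{\mathrm{Zar}}\bigl(X \times \Gm^{n}, \mathbb{Q}\bigr)$ — precisely the part complementary to the copy of $H^{\ell+n}_{\mathrm{Zar}}(X,\mathbb{Q})$ coming from the $\bm1$-tensor factors. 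Since Zariski cohomology of a smooth scheme with constant coefficients $\mathbb{Q}$ is concentrated in degree $0$, where it is $\mathbb{Q}^{\pi_{0}}$, and since $X \times \Gm^{n}$ has the same connected components as $X$, this complementary summand vanishes, which finishes the argument.

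I do not expect a genuine obstacle here: all of the weight is carried by the two black-box inputs invoked above — Voevodsky's cancellation theorem and the rational étale/Nisnevich comparison — and the only hands-on part is the elementary Künneth bookkeeping in the last step. If one insisted on a self-contained proof the real difficulty would be the cancellation theorem itself; given that, I would, as the authors do, simply cite Voevodsky (\cite[Hyp.~3.6.47]{ayoub_thesis_2}).
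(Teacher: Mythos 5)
The paper itself gives no proof beyond the citation of~\cite[Hyp.~3.6.47]{ayoub_thesis_2} --- an axiom that Ayoub records and verifies for the categories used here --- so your unpacking does not follow \emph{a} paper argument so much as it supplies one where the authors deliberately chose not to. That said, your chain of reductions is correct: cancelling the invertible object $\one(i)[j]$, passing to $\DA(\pt)$ via the smooth adjunction $\pi_{\#}\dashv\pi^{*}$ (legitimate because ``$X$ smooth'' here means smooth over $k$), invoking the rational \'etale/Nisnevich comparison to land in $\mathbf{DM}(k,\mathbb{Q})$, twisting by $\one(n)$, and then using Voevodsky's cancellation theorem to work effectively. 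The final step is also sound: since $\one(n)[n]\simeq\widetilde{M}(\Gm)^{\otimes n}$, the object $M(X)(n)[n]$ is a direct summand of $M(X\times\Gm^n)$, so the Hom group in question is a summand of $H^{\ell+n}_{\rom{Zar}}(X\times\Gm^n,\mathbb{Q})$; because the constant sheaf $\mathbb{Q}$ is flasque on a locally irreducible space, this cohomology is $\mathbb{Q}^{\pi_0}$ concentrated in degree $0$, and since $\Gm^n$ is geometrically connected that copy of $\mathbb{Q}^{\pi_0}$ is already exhausted by the summand $M(X)\subset M(X\times\Gm^n)$, forcing every other summand to vanish. One small imprecision: for $n\ge 2$ the summand $M(X)\otimes\widetilde{M}(\Gm)^{\otimes n}$ is not \emph{the} complement of $M(X)$ in $M(X\times\Gm^n)$ (there are intermediate K\"unneth pieces), only one of several complementary summands --- but as your argument kills the whole complement at once, the conclusion is unaffected. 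You are also right to flag cancellation and the rational \'etale comparison as the genuine black boxes; given those, the bookkeeping you wrote out is exactly what one would do.
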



\subsection{The Kummer motive}

Key to Ayoub's definition of the motivic monodromy operator,
and to the proof of our main theorem,
is the Kummer motive.
Before we recall its definition,
we remind the reader about the standard computation
of the motive of $\Gm$.

\begin{rmk}[The motive of $\Gm$.]\label{rmk:motiveGm}
Consider the usual, complementary embeddings
$j\colon \Gm \hookrightarrow \Affl$,
and $i = 0 \colon \pt \hookrightarrow \Affl$.
Gluing along these (\cite[Lemme~1.4.6]{ayoub_thesis_1})
gives a fibre sequence
\begin{equation*}
i_!i^!\bm1 \to \bm1 \to j_*j^*\bm1
.
\end{equation*}
As $i$ is proper, $i_! = i_*$,
and by purity (\cite[\S~1.6.3]{ayoub_thesis_1}),
$i^!\bm1 = \bm1(-1)[-2]$.
Our sequence becomes
\begin{equation*}
i_*\bm1(-1)[-2] \to \bm1 \to j_*\bm1
.
\end{equation*}
Upon shifting the sequence,
applying $p_*$,
and using $h(\Affl) = h(\pt) = \bm1$,
we get a fibre sequence
\begin{equation*}
\bm1 \to h(\Gm) \to \bm1(-1)[-1]
.
\end{equation*}
The canonical point
$1 \colon \pt \hookrightarrow \Gm$
splits the leftmost map,
yielding
\begin{equation*}
h(\Gm) = \bm1 \oplus \bm1(-1)[-1]
.
\end{equation*} 
\end{rmk}

With this,
we are ready to define to recall
Ayoub's definition of the Kummer motive.
We use the notation
\begin{equation*}
	\Gm \xrightarrow{\Delta} \Gm\times\Gm \xrightarrow{\rom{pr}_1} \Gm,
\end{equation*}
for the diagonal and the projection onto the first factor, respectively.
We also need the horizontal section
$\id\times1 \colon \Gm \to \Gm\times\Gm$.
As in Rmk.~\ref{rmk:motiveGm},
the point $1\in\Gm$ induces
an isomorphism
\begin{equation*}
\rom{pr}_{1,*}\rom{pr}_1^*\bm1 \simeq \bm1 \oplus \bm1(-1)[-1]
.
\end{equation*}
In particular there is a map
$\bm1(-1)[-1] \to \rom{pr}_{1,*}\rom{pr}_1^*\bm1$,
which we use in the following definition.

\begin{defn}[{\cite[Def.~3.6.22, Lemme~3.6.28]{ayoub_thesis_2}}]\
\label{defn:Kummer}
\begin{enumerate}
\item
The composition
\begin{equation*}
\bm1(-1)[-1]
\to \rom{pr}_{1,*}\rom{pr}_1^*\bm1
\to \rom{pr}_{1,*}\Delta_*\Delta^*\rom{pr}_1^*\bm1
\simeq \bm1
,
\end{equation*}
is the \emph{Kummer map}.
We denote it by $e_\rom{K}$.
\item
The \emph{Kummer motive}, denoted $\clg{K} \in \DA(\Gm)$,
is the cofibre of the Kummer map $e_\rom{K}$:
\begin{equation*}
\bm1(-1)[-1]
\xrightarrow{e_\rom{K}} \bm1
\xrightarrow{c_\rom{K}} \clg{K}
.
\end{equation*}
\end{enumerate}
\end{defn}


\subsection{Nearby cycles}

Let $f \colon X \to \Affl$ be a quasi-projective morphism
and recall the setup in Notation~\ref{notation:fam_over_A1}.
The motivic nearby cycles functor
\[
\Psi_f \colon \DA(X_{\eta}) \rightarrow \DA(X_{\sigma})
,
\]
is constructed
(in several ways)
in~\cite[\S~3]{ayoub_thesis_2}
and~\cite[\S\S~10--11]{ayoub_etale}.

We briefly recall one of his constructions.

\begin{const}[{%
	\cite[\S~3.6]{ayoub_thesis_2},
	\cite[\S~11]{ayoub_etale}%
}]
\label{const:unip_nearby_cycles}
Recall the Kummer motive $\clg{K}$
and the canonical map
$c_\rom{K} \colon \bm1 \to \clg{K}$
in $\DA(\Gm)$.
We define the logarithm motive
\begin{equation*}
\Log^\vee := \operatorname{Free}_{/\bm1}(\clg{K})
\end{equation*}
as the free algebra on $\clg{K}$ over $\bm1$.\footnote{%
	Ayoub uses the notation $\Sym^\infty\clg{K}$,
	but we are going with the notation
	in~\cite{motivic_monodromy}
	here.
	See [ibid., Rmk.~3.7].%
}
The motivic nearby cycles functor is then defined as
\begin{align*}
\Psi_f \colon \DA(X_\eta)
& \to \DA(X_\sigma)
\\
M
& \mapsto i^*j_*(M \otimes f_\eta^*\Log^\vee)
.
\end{align*}
\end{const}

\begin{rmk}\label{rmk:upsilon}
By~$\Psi_f$,
we denote throughout the paper Ayoub's
\emph{unipotent} nearby cycles functor,
which is denoted by~$\Upsilon_f$ in~\cite{ayoub_thesis_2}.
This is the version of nearby cycles for which
the monodromy operator is defined.
Ayoub also constructs the total nearby cycles functor,
which he denotes by~$\Psi_f$.
Both functors agree for a semistable family
(this follows from \cite[Thm.~3.3.44]{ayoub_thesis_2}).

To complicate things further,
the construction above is actually
of the \emph{logarithmic specialization system},
which Ayoub denotes by $\operatorname{log}_f$.
However,
he shows,
in~\cite[Thm.~3.6.44]{ayoub_thesis_2}
and~\cite[Thm.~11.14]{ayoub_etale},
that $\Upsilon_f \simeq \operatorname{log}_f$.
\end{rmk}

The following properties of
motivic nearby cycles
are foundational.

\begin{prop}[{\cite[Def.~3.1.1, Prop.~3.2.9]{ayoub_thesis_2}}]
\label{prop:nearby_cycles_pushf_pullb}
For every morphism
$ g \colon Y \rightarrow X $
of quasi-projective $\Affl$-schemes,
there are natural transformations
\[
\alpha_g
\colon g_{\sigma}^* \circ \Psi_f
\to \Psi_{f \circ g} \circ g_{\eta}^*
,
\quad
\beta_g
\colon \Psi_f \circ g_{\eta *}
\to g_{\sigma *} \circ \Psi_{f \circ g}
\]
such that:
\begin{enumerate}
\item
If $g$ is smooth, then $\alpha_g$ is an isomorphism.
\item
If $g$ is projective, then $\beta_g$ is an isomorphism.
\qed
\end{enumerate}
\end{prop}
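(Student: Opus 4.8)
The statement is part of Ayoub's formalism, so the plan is to reconstruct the transformations $\alpha_g$ and $\beta_g$ from the formula $\Psi_f M\simeq i^*j_*(M\otimes f_\eta^*\Log^\vee)$ of Construction~\ref{const:unip_nearby_cycles}, using only the standard base-change and projection-formula exchange morphisms of the six-functor formalism, and then to isolate which exchange morphism carries each of the two invertibility statements. Fix $g\colon Y\to X$ over $\Affl$ and record the diagram
\begin{equation*}\begin{tikzcd}
Y_{\sigma} \arrow[r, hook, "i_Y"] \arrow[d, "g_\sigma" '] & Y \arrow[d, "g"] & Y_{\eta} \arrow[l, hook', "j_Y" '] \arrow[d, "g_\eta"] \\
X_{\sigma} \arrow[r, hook, "i_X"] & X & X_{\eta} \arrow[l, hook', "j_X" ']
\end{tikzcd}\end{equation*}
whose two squares are cartesian, since $Y_\sigma = Y\times_X X_\sigma$ and $Y_\eta = Y\times_X X_\eta$. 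As $(f\circ g)_\eta=f_\eta\circ g_\eta$ we have $(f\circ g)_\eta^*\Log^\vee\simeq g_\eta^*f_\eta^*\Log^\vee$, and by monoidality of $g_\eta^*$ also $g_\eta^*(M\otimes f_\eta^*\Log^\vee)\simeq g_\eta^*M\otimes(f\circ g)_\eta^*\Log^\vee$.

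To construct $\alpha_g$ I would rewrite $g_\sigma^*\Psi_f M=g_\sigma^*i_X^*j_{X*}(M\otimes f_\eta^*\Log^\vee)$ using $g_\sigma^*i_X^*\simeq i_Y^*g^*$ (left square cartesian), apply the exchange transformation $g^*j_{X*}\to j_{Y*}g_\eta^*$ — the mate of the counit $j_X^*j_{X*}\to\id$ — and then the monoidality identifications above, landing in $i_Y^*j_{Y*}(g_\eta^*M\otimes(f\circ g)_\eta^*\Log^\vee)\simeq\Psi_{f\circ g}(g_\eta^*M)$. To construct $\beta_g$ I would rewrite $\Psi_f g_{\eta *}M=i_X^*j_{X*}(g_{\eta *}M\otimes f_\eta^*\Log^\vee)$, apply the lax projection-formula map $g_{\eta *}M\otimes f_\eta^*\Log^\vee\to g_{\eta *}(M\otimes g_\eta^*f_\eta^*\Log^\vee)$, use $j_{X*}g_{\eta *}\simeq g_*j_{Y*}$ (both equal $(j_X\circ g_\eta)_*=(g\circ j_Y)_*$), and then the exchange $i_X^*g_*\to g_{\sigma *}i_Y^*$, arriving at $g_{\sigma *}i_Y^*j_{Y*}(M\otimes(f\circ g)_\eta^*\Log^\vee)\simeq g_{\sigma *}\Psi_{f\circ g}M$. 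Both composites are manifestly natural in $M$, since each constituent transformation is.

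For the invertibility statements: in the composite defining $\alpha_g$ every constituent is a formal equivalence except the exchange $g^*j_{X*}\to j_{Y*}g_\eta^*$, and by smooth base change applied to the right-hand cartesian square this is an equivalence whenever $g$ is smooth; this is (1). In the composite defining $\beta_g$, the constituents that are not automatically equivalences are the projection-formula map and the exchange $i_X^*g_*\to g_{\sigma *}i_Y^*$; if $g$ is projective it is in particular proper, so $g_{\eta *}\simeq g_{\eta !}$ and the lax projection-formula map becomes the genuine, invertible one, while $i_X^*g_*\simeq i_X^*g_!\simeq g_{\sigma !}i_Y^*\simeq g_{\sigma *}i_Y^*$ by proper base change; this is (2).

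The step I expect to be the real obstacle — and the reason this is Ayoub's theorem rather than a formal remark — is the compatibility of everything above with the logarithm motive $\Log^\vee$, which is not dualizable but an infinite colimit $\operatorname{Sym}^\infty\clg{K}$ of truncations. One must check that the exchange, projection-formula and base-change equivalences commute with this colimit presentation, so that the pointwise identities upgrade to identities of functors applied to $\Psi$. I would handle this by first establishing everything for the finite truncations $\operatorname{Sym}^{\le n}\clg{K}$, which are successive extensions of Tate twists and hence well-behaved under the relevant exchange and projection-formula morphisms, and then passing to the colimit. This is the bookkeeping carried out in~\cite[\S\S~3.1--3.2]{ayoub_thesis_2}, where moreover $\alpha_g$ and $\beta_g$ are packaged as structure of the logarithmic specialization system $\operatorname{log}_f$.
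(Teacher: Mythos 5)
The paper gives no proof of its own here: the \qed{} marks this as a black-box citation of Ayoub's Def.~3.1.1 and Prop.~3.2.9. So the relevant comparison is between your reconstruction and Ayoub's actual argument. Your reconstruction is correct, and it is essentially the same strategy: Ayoub defines a specialization system abstractly (the data including $\alpha_g$ and $\beta_g$), and then Prop.~3.2.9 verifies that $\chi_f = i^*j_*$ is one by precisely the exchange and base-change bookkeeping you carry out, while a separate formal observation shows that tensoring the coefficient by $f_\eta^*E$ for a fixed $E\in\DA(\Gm)$ takes specialization systems to specialization systems. Your proof fuses these two steps, which is fine. The identification of the two critical non-formal morphisms — the exchange $g^*j_{X*}\to j_{Y*}g_\eta^*$ for smoothness of $\alpha_g$, and the pair (lax projection formula, $i_X^*g_*\to g_{\sigma*}i_Y^*$) for properness of $\beta_g$, resolved via $g_*\simeq g_!$ — is exactly right.

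One correction: the worry in your final paragraph about $\Log^\vee$ being an infinite colimit is a red herring for this particular statement. Nothing in your argument requires $\Log^\vee$ to be dualizable or finitely built: the exchange morphisms, the lax projection formula, and the monoidality identifications are all \emph{natural} in the coefficient object, so they apply verbatim with $\Log^\vee$ plugged in; you never need to decompose it into truncations and then pass to the colimit. The genuine subtlety involving the colimit presentation of $\Log^\vee$ lives elsewhere, namely in Ayoub's comparison $\Upsilon_f\simeq\operatorname{log}_f$ (Thm.~3.6.44 of the thesis, Thm.~11.14 of the étale paper), which the paper flags in Rmk.~\ref{rmk:upsilon} but which is not what Prop.~\ref{prop:nearby_cycles_pushf_pullb} asserts. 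Granting the formula of Construction~\ref{const:unip_nearby_cycles} as the definition of $\Psi_f$, your argument is complete as it stands.
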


These natural transformations $\alpha$ and $\beta$ satisfy various compatibility conditions,
and there are exceptional variants $\mu$ and $\nu$.
For details, see \cite[\S\S~3.1.1--3.1.2]{ayoub_thesis_2}.

\begin{prop}[{\cite[Prop.~10.1]{ayoub_etale}}]\label{prop:psione}
	We have
		\[
	\Psi_{\id} \one \simeq \one .
	\]
	\qed
\end{prop}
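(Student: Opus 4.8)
The plan is to unwind Construction~\ref{const:unip_nearby_cycles} and reduce the statement to a concrete computation with the Kummer and logarithm motives over $\Affl$. For $\id\colon\Affl\to\Affl$ we have $X_\sigma=\pt$, $X_\eta=\Gm$ and $\id_\eta=\id_{\Gm}$, so by definition
\begin{equation*}
\Psi_{\id}\one \;=\; i^*j_*\bigl(\one\otimes\id_{\Gm}^*\Log^\vee\bigr) \;=\; i^*j_*\Log^\vee,
\end{equation*}
where $i\colon\pt\hookrightarrow\Affl$ and $j\colon\Gm\hookrightarrow\Affl$ are the complementary immersions. The algebra unit $\one\to\Log^\vee$ together with the unit $\one_{\Affl}\to j_*\one_{\Gm}$ of the $(j^*,j_*)$-adjunction produces a canonical map $\one_{\pt}=i^*\one_{\Affl}\to i^*j_*\Log^\vee$, and it suffices to show this map is an equivalence.

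First I would reduce to finite stages. Recall that $\Log^\vee\simeq\operatorname{colim}_n\Sym^n\clg{K}$, with transition maps given by multiplication by $c_{\rom K}\colon\one\to\clg{K}$ (this is the concrete description of the free algebra ``under'' $\one$, obtained by identifying $c_{\rom K}$ with the unit, and is Ayoub's original definition of $\Sym^\infty\clg{K}$). Since $j$ is an open immersion, $j_*$ preserves filtered colimits, and $i^*$ always does, so $i^*j_*\Log^\vee\simeq\operatorname{colim}_n i^*j_*\Sym^n\clg{K}$, and it remains to understand the tower $\bigl(i^*j_*\Sym^n\clg{K}\bigr)_n$ and its colimit.

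The heart of the argument is the claim that $i^*j_*\Sym^n\clg{K}\simeq\one\oplus\one(-n-1)[-1]$, with the transition maps acting as the identity on the first summand and as zero on the second. I would prove this by induction on $n$. The base case $n=0$ is the computation $i^*j_*\one_{\Gm}\simeq\one\oplus\one(-1)[-1]$, obtained from the localization triangle for $\pt\hookrightarrow\Affl\hookleftarrow\Gm$ (purity giving $i^!\one_{\Affl}=\one(-1)[-2]$, and the map $\one(-1)[-2]\to\one$ vanishing since $H^{2}(\Spec k,\mathbb{Q}(1))=0$), exactly as in Remark~\ref{rmk:motiveGm}. For the inductive step, since $\clg{K}$ sits in a fibre sequence $\one\xrightarrow{c_{\rom K}}\clg{K}\to\one(-1)$ and we work with $\mathbb{Q}$-coefficients, the standard description of symmetric powers of a unipotent extension gives a fibre sequence
\begin{equation*}
\Sym^{n-1}\clg{K}\xrightarrow{\;\cdot c_{\rom K}\;}\Sym^n\clg{K}\to\one(-n),
\end{equation*}
the cofibre being the top symmetric power $\Sym^n\bigl(\one(-1)\bigr)=\one(-n)$. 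Applying $i^*j_*$ (which preserves fibre sequences, being a composite of a right adjoint and a left adjoint) and feeding in the inductive hypothesis together with $i^*j_*\one(-n)=\one(-n)\oplus\one(-n-1)[-1]$, the outcome is governed by the connecting map $\delta_n\colon i^*j_*\one(-n)\to\bigl(i^*j_*\Sym^{n-1}\clg{K}\bigr)[1]$. A comparison of Tate twists and cohomological degrees shows that the only component of $\delta_n$ that can be nonzero is an endomorphism of the summand $\one(-n)$, and the key point is that this endomorphism is invertible: after projecting $\Sym^{n-1}\clg{K}$ onto its top graded piece $\one(-n+1)$, this component is identified with the image of the extension class under the boundary map
\begin{equation*}
H^1\bigl(\Gm,\mathbb{Q}(1)\bigr)\;\longrightarrow\;H^0\bigl(\pt,\mathbb{Q}(0)\bigr)=\mathbb{Q}
\end{equation*}
of the localization sequence, i.e.\ the valuation at the origin; and this extension class is a nonzero multiple of the Kummer class of the coordinate $t$, whose valuation is $1$. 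Granting this, the summand $\one(-n)\subseteq i^*j_*\one(-n)$ cancels the summand $\one(-n)[-1]\subseteq i^*j_*\Sym^{n-1}\clg{K}$, and what survives telescopes to $\one\oplus\one(-n-1)[-1]$, completing the induction; the assertion about transition maps is checked similarly, the relevant off-diagonal group $\Hom_{\DA(\pt)}\bigl(\one(-n-1),\one(-n-2)\bigr)$ vanishing by Proposition~\ref{prop:neg_twist_vanishing}. Passing to the colimit, the first summands map by the identity and contribute $\one$, while the second summands map to each other by zero and contribute nothing, so $i^*j_*\Log^\vee\simeq\one$; tracing the identifications, this equivalence is the canonical map, whence $\Psi_{\id}\one\simeq\one$.

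I expect the main obstacle to be the identification of the relevant component of $\delta_n$ with the valuation-at-$0$ map --- equivalently, the assertion that the Kummer class has a simple zero at the origin, and that $\Sym^n$ of the (nontrivial, unipotent) Kummer extension remains ``maximally nonsplit'' between consecutive graded pieces. Making this precise requires opening up Ayoub's construction of the Kummer map in Definition~\ref{defn:Kummer}, through the diagonal of $\Gm\times\Gm$, and matching it with the boundary of the localization triangle; morally it encodes the fact that the motivic logarithm has a simple pole, and it is exactly the input that makes the tower telescope.
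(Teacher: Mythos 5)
The paper gives no proof of this statement: it is quoted from Ayoub~\cite[Prop.~10.1]{ayoub_etale} and closed with a \qed. Your proposal is therefore a de novo proof attempt rather than a reconstruction, and the comparison to the paper is moot; what matters is whether your argument holds up on its own.

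There is a genuine gap in the inductive step, namely the assertion that ``a comparison of Tate twists and cohomological degrees shows that the only component of $\delta_n$ that can be nonzero is an endomorphism of the summand $\one(-n)$.'' Writing out the domain and codomain using your inductive hypothesis, $\delta_n$ is a map $\one(-n)\oplus\one(-n-1)[-1]\to\one[1]\oplus\one(-n)$ in $\DA(k)$. The component $\one(-n-1)[-1]\to\one(-n)$ lives in $\Hom_{\DA(k)}(\one,\one(1)[1])=H^1(\Spec k,\mathbb{Q}(1))=k^\times\otimes\mathbb{Q}$, which is nonzero for essentially any field; the component $\one(-n)\to\one[1]$ lives in $H^1(\Spec k,\mathbb{Q}(n))$ and the component $\one(-n-1)[-1]\to\one[1]$ in $H^2(\Spec k,\mathbb{Q}(n+1))$, neither of which vanishes for degree or weight reasons. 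Motivic cohomology of a point is not concentrated in degree zero, so the naive Tate-twist bookkeeping that would be valid for weight-graded pieces does not apply to the actual mapping spaces in $\DA(k)$. Without controlling these off-diagonal entries --- e.g.\ by exploiting functoriality in the multiplicative structure of $\Log^\vee$, or by tracking them through the explicit localization sequence --- you cannot reduce the computation of the fibre of $\delta_n$ to the invertibility of a single scalar, so the telescoping does not yet go through.

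Separately, the invertibility of the diagonal component (the ``valuation of $t$ at $0$ equals $1$'' input) is the genuinely motivic content, and you correctly flag it as the main obstacle; but note that this must be established from Ayoub's construction of $e_\rom{K}$ through the diagonal of $\Gm\times\Gm$, not deduced from the intuition about the logarithm having a simple pole --- that intuition is realization-dependent, and the point of a motivic proof is to avoid appealing to it. Also, the reduction to finite stages requires that $j_*$ commute with filtered colimits, which is true here but is not free and should be cited. As written, the argument is a plausible sketch of a strategy but does not yet constitute a proof.
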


The following result is our main tool for computing nearby cycles.

\begin{prop}[{\cite[Thm.~3.3.44]{ayoub_thesis_2}}]
\label{prop:computability}
Let
$f \colon X \to \Affl$
be a
quasi-pro\-ject\-ive morphism.
Suppose that $f$ is semistable
and recall Notation~\ref{notation:ss}.
Then,
for any of branch $D_i$,
the unit map $\id \rightarrow v_{i,*} v_i^*$
induces a natural isomorphism
\begin{equation}\label{eq:computability1}
u_i^* \Psi_f f_\eta^*
\xrightarrow{\sim} v_{i,*} v_i^* u_i^* \Psi_f f_\eta^* 
.
\end{equation} 
Similarly,
\begin{equation}\label{eq:computability2}
v_{i,!} v_i^! u_i^! \Psi_f f_\eta^!
\xrightarrow{\sim} u_i^! \Psi_f f_\eta^!
.
\end{equation} 
\qed
\end{prop}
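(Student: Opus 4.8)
The plan is to reduce the statement, by smooth base change, to the standard monomial local models, for which both \eqref{eq:computability1} and \eqref{eq:computability2} are the content of Ayoub's explicit computation of the unipotent nearby cycles of a semistable family \cite[Thm.~3.3.44]{ayoub_thesis_2}. First I would observe that the claim is local on $X_\sigma$ for the étale topology. Indeed, for $g\colon X'\to X$ étale the natural transformation $\alpha_g\colon g_\sigma^*\circ\Psi_f\to\Psi_{f\circ g}\circ g_\eta^*$ is an isomorphism (Prop.~\ref{prop:nearby_cycles_pushf_pullb}(1)), so the formation of $u_i^*\Psi_f f_\eta^*$ commutes with $g$; the immersions $D_i^\circ\hookrightarrow D_i\hookrightarrow X_\sigma$ are transverse to $g$, so $v_{i,*}$, $v_i^*$ and $u_i^*$ commute with the restricted base change as well, compatibly with the unit $\id\to v_{i,*}v_i^*$; and a morphism in $\DA$ is an equivalence precisely when it is so after an étale cover. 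Hence it suffices to work in an étale neighbourhood of a point $x\in X_\sigma$. If $x\notin D_i$ there is nothing to prove, so assume $x\in D_i$; by the normal form for a semistable morphism we may then take $X=\mathbb{A}^N_R$ over some ring $R$ with $f=\prod_{j\in S}x_j$, where $S$ is the (finite) set of branches through $x$, so that $i\in S$ after renumbering, the unit appearing in the normal form being absorbed into one of the coordinates. Applying $\alpha$ once more to the smooth projection forgetting the coordinates $x_j$ with $j\notin S$, we reduce to $N=|S|$.

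It remains to prove \eqref{eq:computability1} and \eqref{eq:computability2} for the monomial morphism $f=\prod_{j\in S}x_j\colon\mathbb{A}^{|S|}\to\Affl$, where $D_i=\{x_i=0\}\cong\mathbb{A}^{|S|-1}$ and $D_i^\circ=\{x_i=0,\ x_j\neq 0\text{ for }j\neq i\}\cong\Gm^{|S|-1}$. Over the open subscheme $U=\{x_j\neq 0:j\in S\setminus\{i\}\}$ the monomial $\prod_{j\neq i}x_j$ is invertible, so after the coordinate change $x_i\mapsto x_i\prod_{j\neq i}x_j$ the morphism $f|_U$ becomes the smooth structure map $\Gm^{|S|-1}\times\Affl\to\Affl$; applying $\alpha$ identifies $v_i^*u_i^*\Psi_f f_\eta^*$ with the pullback to $D_i^\circ$ of $\Psi_{\id}$, which on $\one$ is just $\one$ by Prop.~\ref{prop:psione}. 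Thus \eqref{eq:computability1} says exactly that $u_i^*\Psi_f f_\eta^* M$, over all of $D_i$, is recovered by $*$-pushforward from its restriction to $D_i^\circ$ — and dually for \eqref{eq:computability2}. This is precisely the explicit description of the (unipotent, i.e.\ logarithmic) nearby cycles of a monomial morphism: one computes $\Psi_f f_\eta^*$ by induction on $|S|$ from the gluing presentation of nearby cycles of a product of functions, with base case $\Psi_{\id}\one\simeq\one$ (Prop.~\ref{prop:psione}), and reads off that the restriction to any branch $D_j$ is the $*$-pushforward along $D_j^\circ\hookrightarrow D_j$ of a $\Log^\vee$-twist of the input — the $\Log^\vee$-factors carrying the unipotent monodromy and contributing no extra costalk over $\partial D_j:=D_j\setminus D_j^\circ$. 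I would then cite, or reproduce, \cite[Thm.~3.3.44]{ayoub_thesis_2}, which also contains the exceptional variant \eqref{eq:computability2} (alternatively this follows from \eqref{eq:computability1} by Verdier duality, since $\Psi_f$ commutes with duality up to a Tate twist, $u_i^!\mathbb{D}_X\simeq\mathbb{D}_{X_\sigma}u_i^*$, and $\mathbb{D}v_{i,!}v_i^!\mathbb{D}\simeq v_{i,*}v_i^*$).

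The main obstacle is the monomial computation itself — the substance of Ayoub's \S~3.3 — i.e.\ establishing the $\Log^\vee$-/Koszul-type formula for $\Psi$ of a product of coordinates and checking that on each branch it is genuinely a $*$-pushforward from the open stratum, with nothing hidden on $\partial D_j$. By contrast the reductions above are routine, although one should verify that the semistable normal form, the étale-local nature of the equivalence, and the commutation of $v_{i,*}$ with the restricted étale base change are all available over a general base field $k$ of the stated characteristic.
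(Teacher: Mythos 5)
The paper offers no proof of this proposition at all: the citation $\textup{[Thm.~3.3.44]}$ of \cite{ayoub_thesis_2} in the proposition heading \emph{is} the proof, and the body is just \verb|\qed|. Your reduction chain (étale-localization to the monomial model via $\alpha_g$ for étale $g$, then elimination of extra coordinates via $\alpha$ for smooth projections) is correct and standard, and the duality argument deducing \eqref{eq:computability2} from \eqref{eq:computability1} is reasonable, but the whole thing terminates by citing the very theorem you are asked to prove. Since Ayoub's Thm.~3.3.44 is already stated for an arbitrary semistable $f$, not just the monomial local model, your reductions do not actually buy generality — they unpack the statement, which has expository value, but as a proof the argument is circular. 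The genuine mathematical content here (the $\Log^\vee$/Koszul computation for $\Psi$ of a product of coordinates, and the verification that no costalk hides over $\partial D_i$) is precisely the part you defer, and it is precisely what the paper also defers to Ayoub; so in effect your proposal and the paper take the same route, namely none.
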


\begin{cor}\label{cor:computability}
With the same notation as above, we have
\[
u_i^* \Psi_f \one \simeq v_{i,*} \one
,
\quad
u_i^! \Psi_f \one \simeq v_{i,!} \one
.
\]
\end{cor}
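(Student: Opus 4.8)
The plan is to deduce both equivalences directly from Proposition~\ref{prop:computability} by evaluating the natural transformations there at $\one$; the only genuine input needed is the identification of the restriction of $\Psi_f\one$ to the open stratum $D_i^\circ$ with the unit object.

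So the first step is to establish $v_i^*u_i^*\Psi_f\one\simeq\one_{D_i^\circ}$, and likewise with $!$ in place of $*$. I would begin by noting that $f$ is smooth in a neighbourhood of $D_i^\circ$: at a point $w\in D_i^\circ$ the other branches $D_j$ ($j\neq i$) avoid $w$, so near $w$ the special fibre $X_\sigma=f^{-1}(0)$ coincides with the smooth divisor $D_i$, and since $f$ is flat with smooth special fibre at $w$, it is smooth at $w$. Letting $U\subseteq X$ be the (open) smooth locus of $f$ and $U_i:=U\setminus\bigcup_{j\neq i}D_j$, I obtain an open immersion $\iota\colon U_i\hookrightarrow X$ with $(U_i)_\sigma=D_i^\circ$ on which $f$ restricts to a smooth morphism $f_{U_i}\colon U_i\to\Affl$. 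Now I apply the compatibility $\alpha$ of Proposition~\ref{prop:nearby_cycles_pushf_pullb}(1) twice: once to the (smooth) open immersion $\iota$, giving $\iota_\sigma^*\Psi_f\one\simeq\Psi_{f_{U_i}}\iota_\eta^*\one=\Psi_{f_{U_i}}\one$; and once to $f_{U_i}$ regarded as a smooth morphism over $\id_{\Affl}$, which combined with Proposition~\ref{prop:psione} gives $\Psi_{f_{U_i}}\one\simeq(f_{U_i})_\sigma^*\Psi_{\id}\one\simeq\one$. Since $\iota_\sigma$ is exactly the composite $u_i\circ v_i\colon D_i^\circ\hookrightarrow X_\sigma$, which is an open immersion (so $(u_iv_i)^!=(u_iv_i)^*$), this yields both $v_i^*u_i^*\Psi_f\one\simeq\one_{D_i^\circ}$ and $v_i^!u_i^!\Psi_f\one\simeq\one_{D_i^\circ}$.

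To finish, I evaluate Proposition~\ref{prop:computability} at $\one_{\Gm}\in\DA(\Gm)$, using $f_\eta^*\one_{\Gm}=\one$. Equation~\eqref{eq:computability1} then reads
\begin{equation*}
u_i^*\Psi_f\one\;\xrightarrow{\sim}\;v_{i,*}v_i^*u_i^*\Psi_f\one\;\simeq\;v_{i,*}\one ,
\end{equation*}
while equation~\eqref{eq:computability2}, together with $v_i^!=v_i^*$, gives
\begin{equation*}
u_i^!\Psi_f\one\;\simeq\;v_{i,!}v_i^!u_i^!\Psi_f\one\;\simeq\;v_{i,!}\one .
\end{equation*}

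I expect the step requiring the most care to be the identification $\Psi_f\one|_{D_i^\circ}\simeq\one$ — in particular verifying that $f_{U_i}$ is genuinely smooth (flatness of $f$, a regular total space, and a smooth special fibre equal to the whole fibre) and chasing the two instances of $\alpha$ precisely. A secondary point is that \eqref{eq:computability2} is phrased with $f_\eta^!$ rather than $f_\eta^*$, so in order to feed it $\one$ one should note that $f_\eta$ is smooth in the situations where the corollary is applied, whence $f_\eta^!$ agrees with $f_\eta^*$ up to an invertible Tate twist and shift that cancels throughout. Everything else is formal bookkeeping with the six functors and the nearby-cycles compatibilities recalled in the excerpt.
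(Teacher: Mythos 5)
Your proposal is correct and follows essentially the same route as the paper: both compute $v_i^*u_i^*\Psi_f\one\simeq\one$ by applying $\alpha$ from Prop.~\ref{prop:nearby_cycles_pushf_pullb} to the open immersion $X\setminus\bigcup_{j\neq i}D_j\hookrightarrow X$ and then to the smooth structure morphism to $\Affl$, invoke Prop.~\ref{prop:psione}, and feed this into Prop.~\ref{prop:computability}. The one small place you diverge is the exceptional case: the paper runs a parallel computation using the exceptional compatibilities $\mu,\nu$ of nearby cycles (giving $v_i^!u_i^!\Psi_f f_\eta^!\one\simeq w_i^!\one$ and then cancelling the twist), whereas you observe that $u_i\circ v_i\colon D_i^\circ\hookrightarrow X_\sigma$ is an open immersion, so $(u_iv_i)^!=(u_iv_i)^*$ and the $!$-version follows for free from the $*$-version — a clean shortcut that avoids invoking the exceptional compatibilities at all. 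One minor phrasing nit: in your final display you write ``together with $v_i^!=v_i^*$,'' but that identity alone would not suffice (since $u_i^!\neq u_i^*$); what you actually use, and correctly establish in the previous paragraph, is the stronger $(u_iv_i)^!=(u_iv_i)^*$.
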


\begin{proof}
Let $w\colon D_i^\circ\to\pt$ be the structure morphism.
Applying the compatibility with smooth pullback
(Prop.~\ref{prop:nearby_cycles_pushf_pullb}),
first to the open immersion
$X \setminus \cup_{j\neq i} D_j\hookrightarrow X$,
then to the smooth morphism
$X \setminus \cup_{j\neq i} D_j \to \Affl$,
and using Prop.~\ref{prop:psione},
we get
\[
v_i^* u_i^* \Psi_f \one = w_i^* \Psi_{\id} \one = \one
.
\]
Combining this with Prop.~\ref{prop:computability},
we get the first equivalence.
The second equivalence is handled similarly,
as we have
\[
v_i^! u_i^! \Psi_f f_\eta^! \one
\simeq v_i^! u_i^!  f_\sigma^! \Psi_{\id} \one
\simeq w_i^! \Psi_{\id} \one
\simeq w_i^! \one
\] 
for the exceptional pullback functor,
and $f_\eta$ and $w_i$ are both smooth and of the same dimension.
\end{proof}


\subsection{Monodromy and variation}

Consider a quasi-projective family $f \colon X \to \Affl$
and recall Notation~\ref{notation:fam_over_A1}.
Let
$\chi_f := i^*j_*$
denote the canonical specialization functor
(\cite[Ex.~3.1.4]{ayoub_thesis_2}).
By~\cite[Thm.~11.16]{ayoub_etale},
there is a canonical cofibre sequence
\[
\chi_f \to \Psi_f \xrightarrow{N} \Psi_f(-1)
\]
and the map $N \colon \Psi_f \to \Psi_f(-1)$
is called the \emph{monodromy operator}.

There is no monodromy on $\Psi_f\bm1$ when $f$ is smooth:
in this case,
$\Psi_f\bm1 \simeq \bm1$
(by Prop.~\ref{prop:nearby_cycles_pushf_pullb}
and Prop.~\ref{prop:psione}),
and there is no non-zero map $\bm1\to\bm1(-1)$
(Prop.~\ref{prop:neg_twist_vanishing}).
This leads to the monodromy operator being
supported on the singular locus of $f$.
More precisely,
let
$i \colon Z \hookrightarrow X_\sigma$
be a closed subscheme containing the singular locus of $X_\sigma$,
and denote its open complement by $j\colon U \hookrightarrow X_\sigma$.
By gluing~\cite[Lemme~1.4.6]{ayoub_thesis_1},
and the vanishing of $j^*N$, we get
\begin{equation*}\begin{tikzcd}
j_!j^*\Psi_f\bm1 \ar[d, "0"] \ar[r]
  & \Psi_f \ar[d, "N"] \ar[r]
  & i_*i^*\Psi_f\bm1 \ar[dl, dotted] \ar[d, "i_*i^*N"] \\
j_!j^*\Psi_f\bm1(-1) \ar[r] & \Psi_f(-1) \ar[r] & i_*i^*\Psi_f\bm1(-1).
\end{tikzcd}\end{equation*}
Thus,
the monodromy operator factors through the unit map
$\Psi_f\bm1 \to i_*i^*\Psi_f\bm1$.
Using the dual localization sequence,
we get in the same way that
it factors through the natural map
$i_*i^!\Psi_f\bm1(-1) \to \Psi_f\bm1(-1)$.
In summary:

\begin{prop}\label{prop:varmap}
There is a unique map
$\varmap\colon i^*\Psi_f\bm1 \to i^!\Psi_f\bm1(-1)$,
which we call the \emph{variation map} (over $Z$),
that factors the monodromy operator $N$ as
\begin{equation*}
\Psi_f\bm1
\xrightarrow{\rom{un}} i_*i^*\Psi_f\bm1
\xrightarrow{i_*\varmap} i_*i^!\Psi_f\bm1(-1)
\xrightarrow{\rom{coun}} \Psi_f\bm1(-1).
\end{equation*}
\qed
\end{prop}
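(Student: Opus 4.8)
The plan is to splice together the two factorizations sketched in the paragraph preceding the statement, and then to promote the resulting map on $X_\sigma$ to one on $Z$ using full faithfulness of $i_*$. The input I would isolate at the outset is a vanishing statement: over the open complement $j\colon U\hookrightarrow X_\sigma$ of the singular locus, $\Psi_f\bm1$ restricts to $\bm1$ (as recalled before the statement, $f$ being smooth there and $U$ smooth over $k$), so Voevodsky's vanishing (Prop.~\ref{prop:neg_twist_vanishing}) gives
\[
\Hom_{\DA(U)}\bigl(j^*\Psi_f\bm1,\ j^*\Psi_f\bm1(-1)[m]\bigr)=\Hom_{\DA(U)}\bigl(\bm1,\ \bm1(-1)[m]\bigr)=0\qquad(m\in\mathbb Z).
\]
Combined with $j^*i_*=0$ and the adjunctions $(j_!,j^*)$, $(j^*,j_*)$, this forces each of the groups $\Hom_{\DA(X_\sigma)}(j_!j^*\Psi_f\bm1,\,\Psi_f\bm1(-1)[m])$, $\Hom_{\DA(X_\sigma)}(j_!j^*\Psi_f\bm1,\,i_*i^!\Psi_f\bm1(-1)[m])$, $\Hom_{\DA(X_\sigma)}(i_*i^*\Psi_f\bm1,\,j_*j^*\Psi_f\bm1(-1)[m])$ and $\Hom_{\DA(X_\sigma)}(\Psi_f\bm1,\,j_*j^*\Psi_f\bm1(-1)[m])$ to vanish for every $m$.

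Given this, the construction is mechanical. First, by naturality of the counit $j_!j^*\Rightarrow\id$, the composite $j_!j^*\Psi_f\bm1\to\Psi_f\bm1\xrightarrow{N}\Psi_f\bm1(-1)$ equals $j_!(j^*N)$ postcomposed with the counit, hence vanishes since $j^*N=0$ (the $m=0$ case). Applying $\Hom(-,\Psi_f\bm1(-1))$ to the gluing triangle $j_!j^*\Psi_f\bm1\to\Psi_f\bm1\xrightarrow{\rom{un}}i_*i^*\Psi_f\bm1$ of~\cite[Lemme~1.4.6]{ayoub_thesis_1} and using the vanishing of $\Hom(j_!j^*\Psi_f\bm1[1],\Psi_f\bm1(-1))$, we get a \emph{unique} $\widetilde N\colon i_*i^*\Psi_f\bm1\to\Psi_f\bm1(-1)$ with $N=\widetilde N\circ\rom{un}$; this is the dotted arrow in the diagram above the statement. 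Dually, $\widetilde N$ composed with $\Psi_f\bm1(-1)\to j_*j^*\Psi_f\bm1(-1)$ lands in a vanishing group, so applying $\Hom(i_*i^*\Psi_f\bm1,-)$ to the dual gluing triangle $i_*i^!\Psi_f\bm1(-1)\to\Psi_f\bm1(-1)\xrightarrow{\rom{coun}}j_*j^*\Psi_f\bm1(-1)$ and using the vanishing of $\Hom(i_*i^*\Psi_f\bm1,j_*j^*\Psi_f\bm1(-1)[-1])$ yields a \emph{unique} $\overline N\colon i_*i^*\Psi_f\bm1\to i_*i^!\Psi_f\bm1(-1)$ with $\widetilde N=\rom{coun}\circ\overline N$. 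Since $i$ is a closed immersion, $i_*$ is fully faithful, so $\overline N=i_*(\varmap)$ for a unique $\varmap\colon i^*\Psi_f\bm1\to i^!\Psi_f\bm1(-1)$, and by construction $N=\rom{coun}\circ i_*(\varmap)\circ\rom{un}$.

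Uniqueness of $\varmap$ follows by running the same rigidity backwards: if $\varmap'$ also satisfies $N=\rom{coun}\circ i_*(\varmap')\circ\rom{un}$, then $\rom{coun}\circ i_*(\varmap')$ factors $N$ through $\rom{un}$ and so equals $\widetilde N$; then $i_*(\varmap')$ lifts $\widetilde N$ along $\rom{coun}$ and so equals $\overline N$; and faithfulness of $i_*$ gives $\varmap'=\varmap$. I do not expect a real obstacle, since the whole argument is formal six-functor bookkeeping; the only point that genuinely uses the hypotheses is the \emph{uniqueness}---as opposed to mere existence---of the two intermediate factorizations, which is exactly what the negative-Tate-twist vanishings above provide. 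As those hold in every degree, the same input in fact makes the relevant mapping spaces contractible, so that $\varmap$ is canonical at the $\infty$-categorical level and not merely up to homotopy.
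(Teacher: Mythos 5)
Your proposal is correct and follows the same route as the paper: vanishing of $j^*N$ on the smooth locus via $\Psi_f\bm1|_U\simeq\bm1$ and Voevodsky's negative-twist vanishing, then the gluing triangle and its dual to factor $N$ through $i_*i^*$ and $i_*i^!$, and full faithfulness of $i_*$ to descend to a map over $Z$. You spell out the uniqueness bookkeeping (which $\Hom$ groups vanish, and why) a bit more explicitly than the paper's informal paragraph preceding the statement, but the substance is identical.
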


There are times when we need to know
a bit more about the construction of the monodromy operator.

\begin{const}[{\cite[\S~11]{ayoub_etale}}]\label{const:monodromy}
Recall Construction~\ref{const:unip_nearby_cycles}.
The canonical map
$\clg{K} \to \bm1(-1)$
induces a map
$\Log^\vee \to \Log^\vee(-1)$.
This induces the monodromy operator by functoriality
in the formula
$\Psi_f = \chi_f(\blank \otimes f_\eta^*\Log^\vee)$.
\end{const}


\section{%
	First reduction:
	to a semistable family%
}\label{sec:ss_reduction}

In this section we describe how to reduce
computing the monodromy from
the case of a homogeneous or quasi-homogeneous singularity to
that of a special semistable family.

\subsection{The variation map and semistable reduction}

Here we wish to express $\var_f$ in terms of $\var_g$ for a semistable reduction construction. In order to do that we have to keep track of how base change, proper morphisms, and smooth morphisms affect the variation map. First we inspect how base change affects the unipotent nearby cycles and the monodromy.

\begin{prop}[*]\label{prop:basechange}
	Let $f\colon X \to \Affl$ be a morphism
	and let $N_f \colon \Psi_f \one \to \Psi_f \one (-1)$
	be the corresponding monodromy.
	Let $r\colon \Aff^1 \to \Aff^1$ be the $r$th power map
	and let  $f'\colon X' \to \Aff^1 $ be the base change morphism.
	Then we have a canonical isomorphism $\Psi_{f} \simeq \Psi_{f'} \circ r_\eta^*$, under which we have
	\[
	N_{f'} = r \cdot N_f
	.
	\]
\end{prop}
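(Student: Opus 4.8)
The plan is to trace how Ayoub's construction of $\Psi$ and $N$ behaves under the finite map $r\colon \Aff^1 \to \Aff^1$, reducing everything to the corresponding statement for the Kummer motive $\clg{K}$ (equivalently $\Log^\vee$) on $\Gm$. First I would recall that on generic fibres, $r_\eta\colon \Gm \to \Gm$ is the $r$th power map, which is finite étale, so $r_{\eta*} = r_{\eta!}$ and $r_\eta^*$ has both adjoints behaving well. The key input is a comparison of logarithm motives: I claim there is a canonical identification $r_\eta^* \Log^\vee \simeq \Log^\vee$ on $\Gm$ (pulling back the Kummer motive along $r$th power), under which the canonical map $\clg{K}\to \bm1(-1)$ — hence the induced $\Log^\vee \to \Log^\vee(-1)$ — is multiplied by $r$. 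Concretely, $e_\rom{K}$ is built from the splitting $\rom{pr}_{1,*}\rom{pr}_1^*\bm1 \simeq \bm1 \oplus \bm1(-1)[-1]$ and the diagonal; pulling the whole diagram back along $r$ on the base (and on all the $\Gm$ factors) multiplies the degree-$1$ part of $h(\Gm)$, i.e. the $\bm1(-1)[-1]$ summand coming from $\dlog$, by $r$, since $r^*\colon H^1(\Gm)\to H^1(\Gm)$ is multiplication by $r$ on $\dlog(t)$. This is the heart of the matter.

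Granting that, I would proceed as follows. Using Prop.~\ref{prop:nearby_cycles_pushf_pullb} applied to the projective (indeed finite) base-change map $X' \to X$ sitting over $r\colon \Aff^1\to\Aff^1$, together with smoothness of the relevant squares, I would produce the canonical equivalence $\Psi_f \simeq \Psi_{f'}\circ r_\eta^*$: the square defining $f'$ is Cartesian, $r$ is finite so $\beta_r$ is an isomorphism, and on special fibres $r_\sigma$ is an isomorphism (it's the identity on $\pt$, up to the purely inseparable issue which is excluded by $(r,\operatorname{char}k)=1$ — here $r_\sigma$ is literally $\mathrm{id}_{X_\sigma}$). Then I would write both $N_f$ and $N_{f'}$ via Construction~\ref{const:monodromy}: $N_f$ is induced on $i^*j_*(\blank\otimes f_\eta^*\Log^\vee)$ by $\Log^\vee\to\Log^\vee(-1)$, and similarly for $f'$. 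Under the identification $f_\eta^*\Log^\vee \simeq f_\eta^* r_\eta^* \Log^\vee = (r_\eta\circ f'_\eta)^*\Log^\vee$ — wait, more precisely $f'_\eta \circ (\text{base change map})_\eta = r_\eta \circ f_\eta$ — the map inducing $N_f$ is the pullback of the map inducing $N_{f'}$ along $r_\eta$ on the base, which by the previous paragraph is $r$ times the canonical one. Chasing this through the functor $\chi$ gives $N_{f'}\circ(\text{id on }\Psi_{f'}r_\eta^*\one) $ corresponds to $\tfrac1r N_f$, i.e. $N_{f'} = r\cdot N_f$ after the identification; I'd be careful about which side the factor $r$ lands on and orient the equivalence so the statement comes out as written.

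The main obstacle is the comparison $r_\eta^*\Log^\vee \simeq \Log^\vee$ together with the assertion that the monodromy map scales by $r$ under it. One has to be genuinely careful: $\Log^\vee = \operatorname{Free}_{/\bm1}(\clg{K})$ is a free algebra, so pulling back commutes with its formation ($r_\eta^*$ is monoidal), reducing the claim to $r_\eta^*\clg{K}$. Now $r_\eta^*\clg{K}$ is the cofibre of $r_\eta^* e_\rom{K}$, and I'd need to identify $r_\eta^* e_\rom{K}$ with $r\cdot e_\rom{K}$ under $r_\eta^*\bm1(-1)[-1]\simeq\bm1(-1)[-1]$, $r_\eta^*\bm1\simeq\bm1$. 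The cleanest route is to recall that $e_\rom{K}$ represents the class of $t \in \mathcal{O}^*(\Gm) = \Hom(\bm1(-1)[-1],\bm1)$ (the ``motivic $\dlog$'' pairing), and that $r_\eta^* t = t^r$, which equals $r\cdot t$ in the abelian group $\mathcal{O}^*(\Gm)\otimes\mathbb{Q} = \Hom_{\DA(\Gm)}(\bm1(-1)[-1],\bm1)$ — this is exactly the rational-coefficients computation that makes the formula clean, and is the reason $r$ must be invertible. This multiplicativity of the Kummer class under pullback along power maps is surely implicit in Ayoub, but spelling it out carefully (and keeping homotopy-coherence in check, rather than just on homotopy groups) is where the work lies; it is consistent with the paper's own ``marked with asterisks'' caveat on Prop.~\ref{prop:basechange}.
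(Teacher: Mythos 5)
Your proof follows essentially the same reduction as the paper's: compute the effect of $r_\eta^*$ on the Kummer map $e_\rom{K}$, propagate to the Kummer motive $\clg{K}$, then to $\Log^\vee$, and finally to $\Psi$ and its monodromy. The shared crux is that $r_\eta^*e_\rom{K}=r\cdot e_\rom{K}$. You differ in how this is justified: the paper unwinds the definition of $e_\rom{K}$ directly and compares against the composition factored through $\tau\colon(x,y)\mapsto(x,y^r)$, keeping the argument internal to the construction; you instead invoke the identification $\Hom_{\DA(\Gm)}\bigl(\bm1(-1)[-1],\bm1\bigr)\simeq\mathcal{O}^*(\Gm)\otimes\Q$ and the fact that $r^*t=t^r$. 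Your route is more conceptual and makes transparent why rational coefficients are needed, at the price of importing the weight-one motivic cohomology computation as a black box. Your observation that $\operatorname{Free}_{/\bm1}$ commutes with the monoidal functor $r_\eta^*$ is a clean way to pass from $\clg{K}$ to $\Log^\vee$, which the paper only asserts; this is worth keeping.

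One point needs adjustment. You propose to obtain the equivalence $\Psi_f\simeq\Psi_{f'}\circ r_\eta^*$ from Prop.~\ref{prop:nearby_cycles_pushf_pullb} applied to the finite base-change map $X'\to X$. But the transformations $\alpha$ and $\beta$ there are defined for morphisms of quasi-projective $\Affl$-schemes \emph{over a fixed} $\Affl$; the map $X'\to X$ sits over $r\colon\Affl\to\Affl$, and if you treat $X'$ as an $\Affl$-scheme via $r\circ f'$ then $\beta$ compares $\Psi_f$ with $\Psi_{r\circ f'}$, not with $\Psi_{f'}$. The paper instead derives the equivalence from the explicit formula of Construction~\ref{const:unip_nearby_cycles} together with $r_\eta^*\Log^\vee\simeq\Log^\vee$, so that the isomorphism of nearby cycles and the scaling of $N$ are two faces of one and the same $\Log^\vee$-computation rather than separate steps. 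Since you do invoke that formula in the monodromy half of your argument, this is a presentational slip rather than a fatal gap, but the first paragraph of your proposal should not lean on $\beta_r$. Finally, you write that the induced map $\clg{K}\to\bm1(-1)$ is ``multiplied by $r$'' under the identification $r_\eta^*\clg{K}\simeq\clg{K}$; tracing the cofibre diagram of $r_\eta^*e_\rom{K}=r\,e_\rom{K}$ shows it is scaled by $r^{-1}$, and it is only after passing from the monodromy map on $\Log^\vee$ to $N_f$ versus $N_{f'}$ that the factor $r$ lands on the side stated in the proposition. You flag the orientation issue yourself, but it is worth getting right, since a reciprocal here would flip the sign of $\lambda$ in Thm.~\ref{thm:general_intro}.
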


\begin{proof}
	First we analyse the base change influence on the Kummer map $e_K$, then we deduce it for the Kummer motive $\mathcal{K}$, then for $\mathcal{L}og^\vee$, and therefore for $\Psi$.
	Recall the definition of the Kummer map. We have the composition
	\begin{align*}
		\Gm \xrightarrow{\Delta} \Gm\times\Gm \xrightarrow{\rom{p}} \Gm,
	\end{align*}
	where $\rom{p}$ is the projection on the first coordinate. We write the Kummer map for $f'$ as $e_K' \colon \one_{\Gm}(-1)[-1] \to \one_{\Gm}$:
	\begin{equation*}
		\bm1(-1)[-1]
		\to \rom{p}_{*}\rom{p}^*\bm1
		\to \rom{p}_{*}\Delta_*\Delta^*\rom{p}^*\bm1
		\simeq \bm1.
	\end{equation*}
	
	We compare this with $r^* e_K$, which can be described by the composition
	\begin{align*}
		\bm1(-1)[-1]
		\to \rom{p}_{*}\rom{p}^*\bm1
		\to \rom{p}_{*}\tau_* \tau^* \rom{p}^*\bm1\simeq  \rom{p}_{*}\rom{p}^*\bm1
		\to \rom{p}_{*}\tau_*\Delta_*\Delta^*\tau^*\rom{p}^*\bm1
		\simeq \bm1,
	\end{align*}
	where $\tau \colon \Gm \times \Gm \to \Gm \times \Gm$ is given by $(x,y) \mapsto (x, y^r)$. 
	The two maps differ then by the middle endomorphism $\rom{p}_{*}\rom{p}^*\bm1_{\Gm}
	\to \rom{p}_{*}\tau_* \tau^* \rom{p}^*\bm1_{\Gm} \simeq  \rom{p}_{*}\rom{p}^*\bm1_{\Gm}$ of $\rom{p}_{*}\rom{p}^*\bm1_{\Gm} \simeq \one_{\Gm} \oplus \one_{\Gm}(-1)$. This automorphism respects the decomposition, and acts by $\times r$ on the first component and as $\rom{id}$ on the second component. Therefore, we have the commutative diagram
	\[\begin{tikzcd}
		r^*\one(-1)[-1] = \one(-1)[-1] \arrow[d, "\times r" '] \arrow[r, "r^* e_\mathcal{K}"]
		& r^* \one = \one	\arrow[d, "\rom{id}"] \\
		\one(-1)[-1] \arrow[r, "e'_\mathcal{K}"]
		& \one .
	\end{tikzcd}\]
	Taking the cofibre and shifting the sequence,
	we get an isomorphism of fibre sequences
	\[\begin{tikzcd}
		\one\arrow[d, "\rom{id}"] \ar[r] &
		r^*\mathcal{K} \arrow[r] \ar[d, "\sim" {rotate=90, anchor=north}] &
		\one(-1) \ar[d, "\times r" ] 
		\\
		\one \ar[r] &
		\mathcal{K} \ar[r] &
		\one(-1)
		.
	\end{tikzcd}\]
	We claim that the above diagram induces an isomorphism of fibre sequences
	\[\begin{tikzcd}
		\one	\arrow[d, "\rom{id}"] \ar[r]&
		r^*\Log^\vee  \arrow[r, "r^*N"] \ar[d, "\sim" {rotate=90, anchor=north}]&
		r^*\Log^\vee(-1) \ar[d, " \times r" ]
		\\
		\one \ar[r] &
		\Log^\vee  \ar[r, "N"] &
		\Log^\vee(-1)
		.
	\end{tikzcd}\]
	Then by definition of $\Psi$ and the middle isomorphism,
	we have an isomorphism $\Psi_{f} \simeq r_{\sigma, *} \circ \Psi_{f'} \circ r_\eta^*$, and moreover, $N_{f'} \simeq r N_f$.
	A more detailed proof will appear in a later version of this paper.
\end{proof}

We now show that on a good singularity,
the motive of nearby cycles is actually computable.

\begin{prop}\label{proposition:compute nearby cycles}
Let $f\colon X \to \Aff^1$ be a morphism with a good singularity
(Def.~\ref{definition:goodmorphism}).
Let $\pi \colon Y \to X_r$ be a proper morphism such that
$g = f_r \circ \pi$
is special semistable according to the definition,
with $D= g^{-1}(o)$ be a proper branch of $Y$.
Let the restrictions of $\pi$ and the inclusions
be denoted according to the following diagram:
\[\begin{tikzcd}[row sep=large ]
D \arrow[r, "u", hook] \arrow[d, "\pi'"] & Y_\sigma  \arrow[d, "\pi_\sigma"]  \\
o \arrow[r, hook, "o"] & X_\sigma .
\end{tikzcd}\]
Let $v\colon D^\circ := D \setminus C \to D$.
We then have the equivalences
\begin{equation}\label{eq:psissreduction}
\Psi_f \bm1_{X_\eta} \simeq   \pi_{\sigma *} \Psi_g \bm1_{Y_\eta} ,
\end{equation}
\begin{equation}\label{eq:computenearbycycles1}
\Ay^* \colon o^* \Psi_f \bm1 \simeq \pi'_* v_* \bm1_{D^\circ} = h(D^\circ),
\end{equation}
and
\begin{equation}\label{eq:computenearbycycles2}
\Ay^! \colon o^! \Psi_f \bm1 \simeq \pi'_* v_! \bm1_{D^\circ} = h_\rom{c}(D^\circ).
\end{equation} 
\end{prop}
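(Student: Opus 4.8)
The plan is to obtain all three equivalences by combining the two elementary functorialities of the motivic nearby cycles — compatibility with ramified base change of the disc (Prop.~\ref{prop:basechange}) and with proper pushforward (Prop.~\ref{prop:nearby_cycles_pushf_pullb}(2)) — with the local computation on a semistable family (Cor.~\ref{cor:computability}), together with proper base change in the six-functor formalism.

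I would first establish \eqref{eq:psissreduction}. Write $g = f_r \circ \pi$ with $f_r \colon X_r \to \Affl$ the $r$th power base change and $\pi \colon Y \to X_r$ proper (in fact projective in the cases of interest). By Prop.~\ref{prop:basechange} there is a natural equivalence $\Psi_f \simeq \Psi_{f_r} \circ r_\eta^*$, after identifying $\DA((X_r)_\sigma)$ with $\DA(X_\sigma)$ (these categories agree, being insensitive to the nilpotents introduced by the base change); evaluating at the unit gives $\Psi_f \bm1_{X_\eta} \simeq \Psi_{f_r} \bm1_{(X_r)_\eta}$. Since the semistable model is obtained from $X_r$ by a blowup with centre in the special fibre followed by a normalization, $\pi$ restricts to an isomorphism over the generic fibre, whence $\pi_{\eta *} \bm1_{Y_\eta} \simeq \bm1_{(X_r)_\eta}$. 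Finally, as $\pi$ is projective the comparison map $\beta_\pi \colon \Psi_{f_r} \circ \pi_{\eta *} \to \pi_{\sigma *} \circ \Psi_g$ is an isomorphism (Prop.~\ref{prop:nearby_cycles_pushf_pullb}(2)). Composing the three, $\Psi_f \bm1_{X_\eta} \simeq \Psi_{f_r} \pi_{\eta *} \bm1_{Y_\eta} \simeq \pi_{\sigma *} \Psi_g \bm1_{Y_\eta}$.

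For \eqref{eq:computenearbycycles1} and \eqref{eq:computenearbycycles2} I would apply $o^*$, respectively $o^!$, to \eqref{eq:psissreduction}. The square in the statement, relating $u \colon D \hookrightarrow Y_\sigma$ to $o \hookrightarrow X_\sigma$, is Cartesian up to nilpotents, and $\pi_\sigma$ is proper, so the standard base change isomorphisms give $o^* \pi_{\sigma *} \simeq \pi'_* u^*$ and (its right-adjoint companion, equivalently base change in the transposed Cartesian square) $o^! \pi_{\sigma *} \simeq \pi'_* u^!$. Now $D$ is one of the two branches of the special semistable special fibre $Y_\sigma$, so Cor.~\ref{cor:computability} applies verbatim to $g$ and yields $u^* \Psi_g \bm1 \simeq v_* \bm1_{D^\circ}$ and $u^! \Psi_g \bm1 \simeq v_! \bm1_{D^\circ}$. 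Assembling, $o^* \Psi_f \bm1 \simeq \pi'_* v_* \bm1_{D^\circ}$ and $o^! \Psi_f \bm1 \simeq \pi'_* v_! \bm1_{D^\circ}$; since $\pi' \colon D \to \pt$ is proper we have $\pi'_! = \pi'_*$, so the first is $h(D^\circ) = (\pi' v)_* \bm1$ and the second is $h_\rom{c}(D^\circ) = (\pi' v)_! \bm1$, as claimed. The equivalences $\Ay^*$ and $\Ay^!$ are by definition the composites just described.

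The one genuinely geometric ingredient — everything else is a formal manipulation of the six functors and the cited properties of $\Psi$ — is the assertion that $\pi$ restricts to an isomorphism on generic fibres, i.e.\ that $\pi_{\eta *} \bm1 = \bm1$; this is where the explicit shape of the semistable model enters. For (quasi-)homogeneous singularities it is immediate from Prop.~\ref{proposition:homogeneousssred} and Prop.~\ref{proposition:qhomogeneousssred}: the (weighted) blowup is centred in the closed fibre, and the subsequent normalization is trivial over the already-normal generic fibre. In the abstract setting of Def.~\ref{definition:goodmorphism} this should either be added as a hypothesis or verified, since $\pi_{\eta*}\bm1 \neq \bm1$ for a nontrivial modification. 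A secondary, purely bookkeeping point is keeping track of the various nilpotent thickenings ($X_\sigma$ versus $(X_r)_\sigma$, and $D$ versus the scheme-theoretic fibre of $\pi_\sigma$ over $o$); since $\DA$ does not see them, they may be ignored throughout.
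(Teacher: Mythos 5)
Your proof follows essentially the same route as the paper's: reduce to $r=1$ via Prop.~\ref{prop:basechange}, use $\beta_\pi$ from Prop.~\ref{prop:nearby_cycles_pushf_pullb}(2) together with $\pi_\eta$ being an isomorphism to get \eqref{eq:psissreduction}, then apply $o^*$ (resp.\ $o^!$) with proper base change and Cor.~\ref{cor:computability}. Your closing remark that $\pi_\eta$ being an isomorphism is implicitly assumed in Def.~\ref{definition:goodmorphism} is a fair editorial point, but the paper makes the same implicit use of it.
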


\begin{rmk}
Prop.~\ref{proposition:compute nearby cycles} is a motivic version
of~\cite[Thm.~2.6(b)]{illusiePL}.
\end{rmk}

\begin{proof}
	First, by \ref{prop:basechange}, we have $ \Psi_f \one \simeq \Psi_{f_r} \one$, therefore we can assume that $r=1$ and $g= f \circ \pi$. Now by Prop.~\ref{prop:nearby_cycles_pushf_pullb}(2), $\Psi_f \pi_{\eta *}  \simeq \pi_{\sigma*}\Psi_g $, and so, using the fact that $\pi_\eta $ is an isomorphism, we have
	\[ \Psi_f \bm1_{X_\eta} \simeq \Psi_f \pi_{\eta *} \pi_{\eta}^* \bm1_{X_\eta} \simeq \Psi_f \pi_{\eta *} \bm1_{Y_\eta} \simeq  \pi_{\sigma *} \Psi_g \bm1_{Y_\eta} .\]
	Now, by proper base change, $  o^* \pi_{\sigma*}  \simeq   \pi'_* u^*  $,
	by Cor.~\ref{cor:computability}, $u^* \Psi_g \bm1_{Y_\eta} \simeq v_* \bm1_{D^\circ}$, and so
	\[ o^* \Psi_f \bm1_{X_\eta} \simeq o^*  \pi_{\sigma *} \Psi_g \bm1_{Y_\eta} \simeq  \pi'_* u^* \Psi_g \bm1_{Y_\eta} \simeq \pi'_* v_* \bm1_{D^\circ}.\]
	The second isomorphism is obtained similarly by using
	the compact support version of proper base change instead
	($i^! \pi_{\sigma!} \simeq \pi'_! u^! $)
	and the fact that, since $\pi_\sigma$ is proper, $\pi_{\sigma!} \simeq \pi_{\sigma*} $ and $\pi'_{!} \simeq \pi'_{*} $.
\end{proof}

\begin{prop}\label{prop:varcomparison}
Let $f \colon X \to \Aa^1$ with singular locus
$o \colon S \hookrightarrow X_\sigma$,
let	$g = f_r \circ \pi$ with $f_r$ being $r$-base change of $f$,
and let $\pi \colon Y \to X_r$ a proper map.
Let $c \colon C \to Y_\sigma$ be the singular locus of $Y$. We then have the following commutative diagram:
\[\begin{tikzcd}
&o^*\Psi_f\one  \arrow[d, "Ex_*^*" ']  \arrow[rr, "r \cdot \var_f"] & & o^!\Psi_f\one(-1) \\
& \pi_{C *}c^* \Psi_g \one \arrow[rr, "\pi_{C *}c_*\var_g"]
&	&[2em] \pi_{C *}c^! \Psi_g\one(-1) \arrow[u, "Ex_*^!" '] &
\end{tikzcd}\]
where the vertical maps are given by the exchange morphisms
$Ex_*^* \colon o^*\pi_{\sigma *} \to \pi_{C *} c^* $,
and $Ex_*^! \colon \pi_{C *} c^! \to o^! \pi_{\sigma *}$
relative the commutative square
\[\begin{tikzcd}
C  \ar[d, "\pi_C"'] \ar[r, "c", hook]
& Y_\sigma \ar[d, "\pi_\sigma"] \\
S \ar[r, "o", hook] & X_\sigma,
\end{tikzcd}\] 
and modulo the identification~\eqref{eq:psissreduction}.
\end{prop}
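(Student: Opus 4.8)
The strategy is to unwind the definition of the variation map on both sides and match them up via the exchange morphisms, using the factorization of the monodromy operator from Prop.~\ref{prop:varmap} together with the base-change comparison Prop.~\ref{prop:basechange} and the proper-pushforward compatibility of nearby cycles from Prop.~\ref{prop:nearby_cycles_pushf_pullb}(2). Concretely: by Prop.~\ref{prop:basechange} we have $\Psi_f\one \simeq \Psi_{f_r}\one$ compatibly with monodromy up to the factor $r$ (this is where the "$r\cdot$" enters), so we may reduce to $r=1$, $g = f\circ\pi$, $\pi$ proper, and the claim to be proved becomes the compatibility $N_f \leftrightarrow \pi_{\sigma*} N_g$ under the equivalence $\Psi_f\one \simeq \pi_{\sigma*}\Psi_g\one$ of \eqref{eq:psissreduction}, restricted to the singular loci.

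First I would establish that the isomorphism $\Psi_f\pi_{\eta*}\simeq\pi_{\sigma*}\Psi_g$ of Prop.~\ref{prop:nearby_cycles_pushf_pullb}(2) is compatible with the monodromy operators, i.e. that $N_f$ corresponds to $\pi_{\sigma*}N_g$ under \eqref{eq:psissreduction}. This should follow from Construction~\ref{const:monodromy}: the monodromy is induced by the map $\Log^\vee\to\Log^\vee(-1)$ on the coefficient side, $\pi_\eta^*$ is monoidal and commutes with $f_\eta^*\Log^\vee$ (since $g_\eta = f_\eta\circ\pi_\eta$), and $\beta_\pi$ is a map of specialization systems, hence compatible with the extra tensor factor; so the square relating $N_f\circ\beta_\pi$ and $\beta_\pi\circ N_g$ commutes by naturality. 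Next, I would feed this through the defining factorization of $\var$ in Prop.~\ref{prop:varmap}: applying $o^*$ (resp. taking $o^!$ of the twisted side) to the factorization $N_f = \mathrm{coun}\circ o_*(\var_f)\circ\mathrm{un}$, and likewise $c^*$, $c^!$ for $\var_g$, the two factorizations are intertwined by the exchange morphisms $Ex^*_* \colon o^*\pi_{\sigma*}\to\pi_{C*}c^*$ and $Ex^!_*\colon \pi_{C*}c^!\to o^!\pi_{\sigma*}$. The commutativity of the two squares expressing compatibility of unit/counit maps with these exchange morphisms — essentially the statement that $Ex^*_*$ and $Ex^!_*$ are built from the same base-change data for the square relating $C,Y_\sigma,S,X_\sigma$ — is the routine part. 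The uniqueness clause in Prop.~\ref{prop:varmap} then forces the map obtained by transporting $\pi_{C*}c_*\var_g$ through the exchange morphisms to agree with $o_*\var_f$, giving the diagram.

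The main obstacle is the first step: verifying carefully that $\beta_\pi$ (equivalently the equivalence \eqref{eq:psissreduction}) intertwines the two monodromy operators, rather than intertwining them only up to a scalar or up to a sign. This requires genuinely opening Ayoub's construction of $\beta_g$ as a morphism of specialization systems and checking it commutes with the tensor action of $\Log^\vee$ and with the structural map $\mathcal K\to\one(-1)$; the subtlety is that $\beta_\pi$ a priori only respects the $\chi$-part of the construction, and one must ensure it is strictly compatible with the free-algebra structure on $\Log^\vee$ used to define $\Psi$. Once that compatibility is in hand, the rest is a diagram chase through the six-functor formalism comparing exchange morphisms, with no further essential input. (Consistent with the paper's conventions, a fully detailed verification of the monodromy compatibility of $\beta_\pi$ would be deferred; this statement would carry an asterisk.)
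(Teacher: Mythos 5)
Your proposal takes essentially the same route as the paper: reduce to $r=1$ via Prop.~\ref{prop:basechange}, line up the two factorizations of $N_f$ and $\pi_{\sigma*}N_g$ from Prop.~\ref{prop:varmap} as parallel rows, observe that the outer rectangle commutes (monodromy compatibility) and the two outer squares commute by the definition of $Ex_*^*$ and $Ex_*^!$, deduce that the middle square commutes by uniqueness of $\var$, and finish by applying $o^*$ and using that $o_*$ is fully faithful. You rightly single out the compatibility of $\beta_\pi$ with the monodromy operators as the genuinely nontrivial input; the paper dispatches that step with a one-line appeal to ``naturality of monodromy,'' so your extra scrutiny of that point is warranted but not a methodological divergence.
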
	

\begin{proof}
We know already how the base-change affects the monodromy so we can assume $r=1$ and multiply the result by $r$.
Writing the decomposition of Prop.~\ref{prop:varmap} for the monodromy of $f$ and $g$, pushing forward $N_g$ by $\pi_{\sigma *} $, and using the equivalence \eqref{eq:psissreduction} on the terms of $N_f$, we have the diagram
\[\begin{tikzcd}
\pi_{\sigma *}\Psi_g\one \arrow[r, "\eta_o"] \arrow[d, equals ]
&  o_*o^*\pi_{\sigma *}\Psi_g\one \arrow[ r, "r \cdot o_* \var_f"] \arrow[d, "Ex_*^*" ']
&[2em] o_*o^!\pi_{\sigma *}\Psi_g\one(-1) \arrow[r, "\epsilon_o"]
& \pi_{\sigma *}\Psi_g\one(-1) \arrow[d, equals ]
\\
\pi_{\sigma *}\Psi_g\one \arrow[r, "\eta_c"] 
& \pi_{\sigma *}c_*c^*\Psi_g\one \arrow[r, " \pi_{\sigma *}c_* \var_g"]
&[2em] \pi_{\sigma *}c_*c^!\Psi_g\one(-1)
	\arrow[r, "\epsilon_C"]
	\arrow[u, "Ex_*^!" ']
& \pi_{\sigma *}\Psi_g\one(-1) .
\end{tikzcd}\]
The outer rectangle commutes from the naturality of monodromy,
so $\pi_{\sigma *} N_g = N_f $.
The leftmost square commutes,
since by definition $Ex_*^*$ is given by
precomposing with $\eta_c$ and composing with $\epsilon_o$.
Similarly, the rightmost square commutes
Then, the middle square commutes by
all of the above and
the definition of $\var$.
Applying $o^*$ to this central square
and using that it is fully faithful,
we get the desired resulting square.
\end{proof}
	
In order to have freedom to choose a semistable model, we also similarly show how $\var$ behaves along smooth pullbacks.  

\begin{prop}\label{prop:varsmoothcomparison}
Let
$g \colon Y \to \Aa^1$
be a morphism with singular locus
$c \colon C \hookrightarrow Y_\sigma$,
let $g' \colon Y' \to \Aa^1$ be with singular locus
$c' \colon C' \to Y'_\sigma$,
and
let $\rho \colon Y \to Y'$
be a smooth map such that
$g = g' \circ \rho $,
and $C = \rho^{-1} (C')$.
 Then,
 \[
 \rho_C^* \var_{g'} \simeq \var_g
 .
 \]
More precisely, the diagram
\[\begin{tikzcd}
\rho_C^* c'^*\Psi_{g'}\one
	\arrow[d, "Ex_*^*" ']
	\arrow[rr, " \rho_C^* \var_{g'}"]
&& \rho_C^* c'^!\Psi_{g'}\one(-1)
	\arrow[d, "Ex^{!,*} \circ Ex_*^*"]
\\
c^* \Psi_g \one
	\arrow[rr, "\var_g"]
&&[2em] c^! \Psi_g\one(-1)
\end{tikzcd}\]
commutes.
The vertical maps are isomorphisms given by the exchange isomorphisms
$Ex_*^* \colon \rho_\sigma^* c'_{*} \to c_{*} \rho_C^* $,
and
$Ex^{!,*} \colon \rho_C^*  c'^! \to c^! \rho_\sigma^*$
relative the smooth pullback square
\[\begin{tikzcd}
C  \ar[d, "\rho_C"'] \ar[r, "c", hook]
& Y_\sigma \ar[d, "\rho_\sigma"] \\
C' \ar[r, "c'", hook] & Y_\sigma',
\end{tikzcd}\] 
and modulo the equivalence
$\rho_\sigma^* \Psi_{g'} \one \simeq \Psi_{g} \one $
given by the map $\beta_\rho$
from Prop.~\ref{prop:nearby_cycles_pushf_pullb}.
\end{prop}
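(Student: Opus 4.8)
The plan is to follow the proof of Prop.~\ref{prop:varcomparison}, with proper pushforward replaced everywhere by smooth pullback. By Prop.~\ref{prop:varmap}, $\var_{g'}$ (resp.\ $\var_g$) is the unique map whose image under $c'_*$ (resp.\ $c_*$) factors the monodromy $N_{g'}$ (resp.\ $N_g$) through the relevant units and counits. Hence it suffices to establish two points: (i) the smooth-pullback comparison $\alpha_\rho\colon \rho_\sigma^*\Psi_{g'}\one \xrightarrow{\sim} \Psi_g\one$ of Prop.~\ref{prop:nearby_cycles_pushf_pullb} (an isomorphism since $\rho$ is smooth) intertwines the monodromy operators, $\rho_\sigma^* N_{g'} \simeq N_g$; and (ii) this isomorphism is compatible with the exchange isomorphisms along the Cartesian square relating $c$ and $c'$, after which the statement follows by uniqueness.

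For (i) I would unwind Constructions~\ref{const:unip_nearby_cycles} and~\ref{const:monodromy}. Writing $\Psi_g\one = \chi_g\bigl(g_\eta^*\Log^\vee\bigr)$ with $\chi_g = i_Y^* j_{Y*}$ for the inclusions $i_Y\colon Y_\sigma \hookrightarrow Y$, $j_Y\colon Y_\eta \hookrightarrow Y$, the operator $N_g$ is induced by functoriality from the single morphism $g_\eta^*(\Log^\vee \to \Log^\vee(-1))$ over $Y_\eta$, and likewise for $g'$. Since $g = g'\circ\rho$ with $\rho$ smooth, intersecting $\rho$ with the special and generic fibres yields Cartesian squares, so $\rho_\sigma^* i_{Y'}^* \simeq i_Y^*\rho^*$ and, by smooth base change, $\rho^* j_{Y'*} \simeq j_{Y*}\rho_\eta^*$; hence $\rho_\sigma^*\chi_{g'} \simeq \chi_g\rho_\eta^*$. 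Combined with $\rho_\eta^*(g'_\eta)^*\Log^\vee = g_\eta^*\Log^\vee$ this produces the isomorphism $\rho_\sigma^*\Psi_{g'}\one \simeq \Psi_g\one$, and because both monodromies are induced by the same map on $\Log^\vee$ over $\Gm$, this isomorphism carries $\rho_\sigma^* N_{g'}$ to $N_g$. This step is the smooth-pullback analogue of the base-change computation in Prop.~\ref{prop:basechange}, and a fully coherent account of these identifications will be given in a later version.

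For (ii) I would apply $\rho_\sigma^*$ to the Prop.~\ref{prop:varmap} factorization of $N_{g'}$ over $C'$ and compare it, term by term, with the factorization of $N_g$ over $C = \rho^{-1}(C')$ --- note $C$ is indeed the singular locus of $g$, as $\rho$ is smooth. This gives a ladder
\[\begin{tikzcd}[column sep=large]
\rho_\sigma^*\Psi_{g'}\one \arrow[d, equals] \arrow[r] &
\rho_\sigma^* c'_* c'^*\Psi_{g'}\one \arrow[d, "Ex"] \arrow[r] &
\rho_\sigma^* c'_* c'^!\Psi_{g'}\one(-1) \arrow[d, "Ex"] \arrow[r] &
\rho_\sigma^*\Psi_{g'}\one(-1) \arrow[d, equals]
\\
\Psi_g\one \arrow[r] &
c_* c^*\Psi_g\one \arrow[r, "c_*\var_g"] &
c_* c^!\Psi_g\one(-1) \arrow[r] &
\Psi_g\one(-1) ,
\end{tikzcd}\]
whose inner vertical maps are built from $\alpha_\rho$ together with the exchange isomorphisms $\rho_\sigma^* c'_* \simeq c_*\rho_C^*$ and $\rho_C^* c'^! \simeq c^!\rho_\sigma^*$; these are isomorphisms because the square $C \to Y_\sigma \to Y'_\sigma \leftarrow C'$ is Cartesian with $\rho_\sigma$ smooth, so $\rho_C$ is smooth of the same relative dimension and no Tate twist intervenes. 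The outer rectangle commutes by (i); the two end squares commute by the definition of the exchange morphisms through units and counits, exactly as in the proof of Prop.~\ref{prop:varcomparison}. Transporting the top middle arrow via $\alpha_\rho$ and the inner exchanges, one gets a self-map of the form $c_*\bigl(\rho_C^*\var_{g'}\bigr)$ fitting into a factorization of $N_g$; by the uniqueness clause of Prop.~\ref{prop:varmap} it coincides with $c_*\var_g$, so $\rho_C^*\var_{g'} = \var_g$. Applying $c^*$ and unwinding the identifications (using $c^*c_*\simeq\id$ and $\rho_C^* c'^* \simeq c^*\rho_\sigma^*$) reproduces precisely the square displayed in the statement, with its vertical maps reduced to the asserted exchange isomorphisms.

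The main obstacle is (i): pinning down, with full coherence, that Ayoub's smooth-pullback comparison $\alpha_\rho$ respects the monodromy. Morally this is forced --- $N$ is induced by one map on $\Log^\vee$, which $\alpha_\rho$ manifestly transports --- but a careful argument must juggle the same web of compatibilities (between the $\chi(\blank\otimes\Log^\vee)$ formula, the natural transformations of Prop.~\ref{prop:nearby_cycles_pushf_pullb}, and the $\Upsilon\simeq\log_f$ comparison of Rmk.~\ref{rmk:upsilon}) that already underlies Prop.~\ref{prop:basechange}. Once (i) is in hand, part (ii) is a formal diagram chase with exchange morphisms.
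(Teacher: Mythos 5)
Your proposal follows essentially the same route as the paper: apply $\rho_\sigma^*$ to the Prop.~\ref{prop:varmap} factorization of $N_{g'}$, build the ladder against the factorization of $N_g$, check that the outer rectangle and the two end squares commute (using the unit/counit description of the exchange morphisms), and conclude by the uniqueness in Prop.~\ref{prop:varmap}, finally applying $c^*$ and full faithfulness. The one place you go further is your step (i), where you unwind Constructions~\ref{const:unip_nearby_cycles} and~\ref{const:monodromy} to justify $\rho_\sigma^* N_{g'}\simeq N_g$; the paper simply asserts this as ``naturality of monodromy'', so your argument supplies detail the paper leaves implicit rather than taking a different path.
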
	
	
\begin{proof}

		Writing the decomposition of Prop.~\ref{prop:varmap} for the monodromy of $f$ and $g$, pushing forward $N_g$ by $\pi_{\sigma *} $, and using the equivalence \eqref{eq:psissreduction} on the terms of $N_f$, we have the diagram
		\[\begin{tikzcd}
			\rho_{\sigma}^* \Psi_{g'} \one \arrow[r, "\eta_{c'}"] \arrow[d, "\simeq"' ]
			& \rho_{\sigma}^* c'_*c'^*\Psi_{g'}\one \arrow[ r, "\rho_{\sigma}^* c'_* \var_{g'}"] \arrow[d, "\simeq", "Ex_*^*" '] &  [2em] \rho_{\sigma}^*c'_*c'^!\Psi_{g'}\one(-1)  \arrow[r, "\epsilon_{c'}"] \arrow[d, "Ex^{!,*} \circ Ex_*^*", "\simeq" ']  & \rho_{\sigma}^* \Psi_{g'}\one(-1) \arrow[d, "\simeq" ] \\
			\Psi_{g}\one \arrow[r, "\eta_c"] 
			&c_*c^*\Psi_g\one \arrow[r, " c_* \var_g"]  &  [2em] c_*c^!\Psi_g\one(-1)  \arrow[r, "\epsilon_C"] & \Psi_g\one(-1) .
		\end{tikzcd}\]
		The outer rectangle commutes from the naturality of monodromy, as $\rho_{\sigma}^* N_{g'} = N_g $. The leftmost square commutes since by definition $Ex_*^*$ is given by precomposing with $\eta_c$ and composing with $\epsilon_{c'}$. The rightmost square also commutes, since $Ex^{!,*}$ is given by applying $\eta^!_C$, $(Ex_*^*)^{-1}$ and then $\epsilon^!_{c'}$ (see \cite[Prop. 1.4.15]{ayoub_thesis_1}). The vertical arrows are all isomorphisms since $\rho_{\sigma}$ is smooth and by smooth base change properties (for $Ex^{!,*}$ see \cite[Prop. 1.4.17]{ayoub_thesis_1}). The middle square commutes then by all  of the above and the definition of $\var$.
		Applying $c^*$ to this central square and using the fact that, being an immersion, it is fully faithful (and similarly $c'$), we get the desired result.
\end{proof}
	
\subsection{The monodromy of a special semistable family}

\begin{prop}\label{prop:motivetriangle}
Let
$ C \xhookrightarrow{i} D \xhookleftarrow{j} D^\circ := D\setminus C $
be closed and open complementing immersions into a scheme $D$,
where $C$ is a subscheme of codimension $1$ in $D$.
We have the following fibre sequences in $\DA(D)$:
\begin{equation}\label{eq:motivetrianglea}
h_D(D) \to h_D(D^\circ) \xrightarrow{\alpha} h_D(C)(-1)[-1]
\end{equation}	
\begin{equation}\label{eq:motivetriangleb}
h_D(C)[-1] \xrightarrow{\beta} h_{D, \rom{c}}(D^\circ) \to h_D(D)
.
\end{equation}
\end{prop}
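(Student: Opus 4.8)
The plan is to obtain both fibre sequences by evaluating the two recollement triangles attached to the complementary immersions $i$ and $j$ at the unit object $\one_D$, and then identifying the terms, using purity for the closed immersion $i$ in the case of \eqref{eq:motivetrianglea}.

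I would start from the gluing fibre sequences in $\DA(D)$ (\cite[Lemme~1.4.6]{ayoub_thesis_1})
\[
i_* i^! \to \id \to j_* j^*,
\qquad
j_! j^* \to \id \to i_* i^*.
\]
Evaluating the first one at $\one_D$ gives $i_* i^! \one_D \to \one_D \to j_*\one_{D^\circ}$. Here the key input is purity: since $i \colon C \hookrightarrow D$ is a closed immersion of codimension one between smooth $k$-schemes --- which is the situation in all applications, cf.\ Prop.~\ref{proposition:homogeneousssred} and Prop.~\ref{proposition:qhomogeneousssred} --- relative purity gives a canonical equivalence $i^!\one_D \simeq i^*\one_D(-1)[-2] = \one_C(-1)[-2]$ (\cite[\S~1.6.3]{ayoub_thesis_1}), hence $i_*i^!\one_D \simeq h_D(C)(-1)[-2]$. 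Since $j_*\one_{D^\circ} = h_D(D^\circ)$ and $\one_D = h_D(D)$, the triangle reads $h_D(C)(-1)[-2] \to h_D(D) \to h_D(D^\circ)$; rotating it one step (so that the first term is shifted by $[1]$, turning $[-2]$ into $[-1]$) yields \eqref{eq:motivetrianglea}, with $\alpha$ the resulting connecting map.

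For \eqref{eq:motivetriangleb} I would evaluate the second gluing triangle at $\one_D$, using $j^*\one_D = \one_{D^\circ}$, $i^*\one_D = \one_C$, and $i_* = i_!$ (as $i$ is a closed immersion). This gives a fibre sequence $j_!\one_{D^\circ} \to \one_D \to i_*\one_C$, i.e.\ $h_{D,\rom{c}}(D^\circ) \to h_D(D) \to h_D(C)$. Rotating one step in the other direction yields \eqref{eq:motivetriangleb}, with $\beta$ the connecting map; no purity is needed here.

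Everything is formal once the purity identification $i^!\one_D \simeq \one_C(-1)[-2]$ is available, so the only point requiring care is to make explicit the regularity hypothesis on $D$ and the divisor $C$ under which it holds (for the uses in this paper, $D$ and $C$ are smooth over $k$, so the relative purity isomorphism of \cite[\S~1.6.3]{ayoub_thesis_1} applies directly). I do not expect any genuine obstacle; for a general singular $D$ one would simply replace the twist-and-shift $(-1)[-1]$ by the appropriate term built from $i^!\one_D$.
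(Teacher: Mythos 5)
Your proof is correct and follows essentially the same route as the paper: apply the two gluing (localization) triangles to $\one_D$, use purity in codimension one to identify $i_*i^!\one_D$ with $h_D(C)(-1)[-2]$, and rotate. Your remark that the purity isomorphism requires regularity of $D$ and $C$ (not literally guaranteed by the statement as phrased, but satisfied in every application in the paper) is a fair and accurate observation.
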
	
	
\begin{proof}
To get the first sequence,
apply the localization sequence
\[
i_!i^! \to \id_D \to j_*j^*
\]
to $\bm1_{D}$
(\cite[Lemme~1.4.6]{ayoub_thesis_1}),
and shift it.
Then use purity in codimension $1$,
i.e., $i^! \simeq i^*\bm 1(-1)[-2]$
(\cite[Prop.~5.7]{hoyois_six} or \cite[\S~1.6.3]{ayoub_thesis_1}).
The second sequence is obtained
similarly,
using the dual localization sequence
$j^!j_! \to \id_D \to i_*i^*$.
\end{proof}

\begin{prop}\label{prop:triangle}
Let
$g \colon X \rightarrow \Aa^1$
be a quasi-projective, semistable morphism.
Recall Notation~\ref{notation:ss}.
Fix one of the branches $D := D_i$,
(and write $D_i^\circ =: D^\circ$, $v_i =: v$, etc.).
We have fibre sequences
\begin{equation}\label{eq:ss_triangle_a}
\bm1_C \to c^*\Psi_g \bm 1_{X_\eta} \xrightarrow{\rom{(a)}} \bm1_C(-1)[-1]
,
\end{equation}
and
\begin{equation}\label{eq:ss_triangle_b}
\bm1_C[-1] \xrightarrow{\rom{(b)}} c^!\Psi_g \one_{X_\eta} \to \bm1_C(-1)[-2]
,
\end{equation}
in $\DA(C)$.
In fact,
$\rom{(a)}= c_0^* \alpha \circ c_0^* (\Ay^*)$,
and $\rom{(b)}= c_0^*(\Ay^!) \circ c_0^* \beta$,
with the notation
of~\eqref{eq:computenearbycycles1}--\eqref{eq:computenearbycycles2}
and~\eqref{eq:motivetrianglea}--\eqref{eq:motivetriangleb}.
\end{prop}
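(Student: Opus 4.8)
The plan is to obtain both fibre sequences by restricting, along the closed immersion $c_{0,i}\colon C\hookrightarrow D_i$ of Notation~\ref{notation:ss}, the two localization sequences of Prop.~\ref{prop:motivetriangle} for the complementary pair $C\xhookrightarrow{c_{0,i}}D_i\xhookleftarrow{v_i}D_i^\circ$ --- the cohomological one \eqref{eq:motivetrianglea} along $c_{0,i}^*$ and the compact-support one \eqref{eq:motivetriangleb} along $c_{0,i}^!$ --- after identifying their middle terms with $c^*\Psi_g\one_{X_\eta}$ and $c^!\Psi_g\one_{X_\eta}$. (Here $c_{0,i}$ is a closed immersion of codimension one, so purity and the fully-faithfulness identities used below are available.) For the middle terms: since $c=u_i\circ c_{0,i}$ we have $c^*=c_{0,i}^*u_i^*$ and $c^!=c_{0,i}^!u_i^!$, while Cor.~\ref{cor:computability} furnishes canonical equivalences $u_i^*\Psi_g\one_{X_\eta}\simeq v_{i,*}\one_{D_i^\circ}=h_{D_i}(D_i^\circ)$ and $u_i^!\Psi_g\one_{X_\eta}\simeq v_{i,!}\one_{D_i^\circ}=h_{D_i,\rom{c}}(D_i^\circ)$, which in the notation of \eqref{eq:computenearbycycles1}--\eqref{eq:computenearbycycles2} (taken with $\pi=\id$) are $\Ay^*$ and $\Ay^!$. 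Applying $c_{0,i}^*$ to the first and $c_{0,i}^!$ to the second yields equivalences
\[
c_0^*(\Ay^*)\colon c^*\Psi_g\one_{X_\eta}\xrightarrow{\sim}c_{0,i}^*h_{D_i}(D_i^\circ),\qquad c_0^!(\Ay^!)\colon c_{0,i}^!h_{D_i,\rom{c}}(D_i^\circ)\xrightarrow{\sim}c^!\Psi_g\one_{X_\eta}.
\]

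Next I would identify the outer terms. From $h_{D_i}(D_i)=\one_{D_i}$ we get $c_{0,i}^*h_{D_i}(D_i)=\one_C$ and, by purity in codimension one, $c_{0,i}^!h_{D_i}(D_i)=c_{0,i}^!\one_{D_i}\simeq\one_C(-1)[-2]$. From $h_{D_i}(C)=c_{0,i,*}\one_C$ together with the identities $c_{0,i}^*c_{0,i,*}\simeq\id$ and $c_{0,i}^!c_{0,i,*}\simeq\id$ for a closed immersion, we get $c_{0,i}^*h_{D_i}(C)\simeq\one_C\simeq c_{0,i}^!h_{D_i}(C)$. Applying the exact functor $c_{0,i}^*$ to \eqref{eq:motivetrianglea} and substituting these identifications together with $c_0^*(\Ay^*)$ in the middle produces exactly the fibre sequence \eqref{eq:ss_triangle_a}, whose second map is by construction $c_0^*\alpha\circ c_0^*(\Ay^*)$. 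Symmetrically, applying $c_{0,i}^!$ to \eqref{eq:motivetriangleb} and substituting $c_0^!(\Ay^!)$ in the middle produces \eqref{eq:ss_triangle_b}, whose first map is by construction $c_0^!(\Ay^!)\circ c_0^!\beta$. (Note that for the compact-support sequence one restricts along $c_{0,i}^!$ rather than $c_{0,i}^*$.)

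There is no substantial obstacle here: once Cor.~\ref{cor:computability} and Prop.~\ref{prop:motivetriangle} are in hand, the maps $\rom{(a)}$ and $\rom{(b)}$ are simply \emph{defined} to be the displayed composites, and the fibre-sequence property is inherited from \eqref{eq:motivetrianglea}--\eqref{eq:motivetriangleb}. The two points that need care are keeping the variance straight ($c^*\Psi_g\one_{X_\eta}$ matching $c_{0,i}^*$ of the cohomological motive, and $c^!\Psi_g\one_{X_\eta}$ matching $c_{0,i}^!$ of the compact-support motive), and verifying that the equivalence being restricted along $c_{0,i}$ is genuinely the one of Cor.~\ref{cor:computability}, so that $\rom{(a)}$ and $\rom{(b)}$ come out equal to the stated composites on the nose rather than up to a unit or a sign; this amounts to unwinding the construction behind Cor.~\ref{cor:computability}, which is itself built from the same localization sequence on $X_\sigma$.
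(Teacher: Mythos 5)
Your proof is correct and follows the same route as the paper: restrict the two localization sequences of Prop.~\ref{prop:motivetriangle} for the pair $C\hookrightarrow D_i\hookleftarrow D_i^\circ$ along $c_0^*$ (resp.\ $c_0^!$), use $c_0^*c_{0*}\simeq\id$, $c_0^!c_{0*}\simeq\id$ and codimension-one purity $c_0^!\one_D\simeq\one_C(-1)[-2]$ on the outer terms, and identify the middle terms with $c^*\Psi_g\one$ and $c^!\Psi_g\one$ via Cor.~\ref{cor:computability}. You were right to apply $c_{0}^!$ (not $c_{0}^*$) to the compact-support sequence -- as you'd otherwise get $c_0^*v_!\simeq 0$ -- and the displayed formula for $\rom{(b)}$ in the statement should indeed read $c_0^!(\Ay^!)\circ c_0^!\beta$, matching the paper's own proof.
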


\begin{proof}
For the first sequence,
use the proposition above for
$ C \xhookrightarrow{c_0} D \xhookleftarrow{v} D^\circ $
and apply $c_0^*$,
to get the sequence 
\[
\bm1_C
\to c_0^*v_* \bm1_{D^\circ}
\xrightarrow{\alpha} c_0^* c_{0*} \bm{1}_C(-1)[-1]
.
\]
Then use the fact that,
since $c_0$ is an immersion,
the counit
$ c_0^*	c_{0*} \to \id$
is invertible
(\cite[Def.~1.4.1]{ayoub_thesis_1}).
In addition,
by Cor.~\ref{cor:computability},
$u^*\Psi_g \bm1_{X_\eta} \simeq v_{*} \bm 1_{D^\circ}$,
so the middle term of the sequence gives nearby cycles at $C$:
$
c^*\Psi_g \bm1_{X_\eta}
\simeq c_0^* u^* \Psi_g \bm1_{X_\eta}
\simeq  c_0^* v_{*} \bm 1_{D^\circ}
$.
We get the first sequence,
\[
\bm1_{C}
\to c^*\Psi_g \bm1_{X_\eta}
\to \bm1_{C}(-1)[-1]
.
\] 
The second sequence is obtained similarly,
using Cor.~\ref{cor:computability} again,
so
\[
c^!\Psi_g \bm1_{X_\eta}
\simeq c_0^! u^! \Psi_g \one_{X_\eta}
\simeq c_0^! v_! \one_{D^\circ}
.
\]
\end{proof}
	
	We can now deduce that up to a constant, the variation map can be expressed in terms of the maps $\rom{(a)}$ and $\rom{(b)}$ above.

\begin{prop}\label{prop:sss_monodromy}
	Let $g$ be a special semistable morphism.
	Recall Prop.~\ref{prop:varmap}.
	The following diagram commutes:
	\begin{equation*}\begin{tikzcd}[row sep=huge]
			c^*\Psi_g\one \ar[r, "\varmap"] \ar[d, "\rom{(a)}" ']
			&[2em]  c^!\Psi_g\one(-1)  \\
			\one(-1)[-1] \ar[r, "\lambda"] 
			& \ar[u, "\rom{(b)}(-1)" '] \one(-1)[-1].
	\end{tikzcd}\end{equation*}
\end{prop}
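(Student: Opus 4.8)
The statement is essentially formal: it follows by juxtaposing the two fibre sequences of Prop.~\ref{prop:triangle} with the vanishing Prop.~\ref{prop:neg_twist_vanishing}. The only geometric input is that $C$ is smooth --- for a special semistable $g$ the special fibre is $D_1\cup D_2$ with both branches smooth and meeting transversally (Def.~\ref{definition:sss}), so $C=D_1\cap D_2$ is smooth --- and this is what lets us apply Prop.~\ref{prop:neg_twist_vanishing} over $C$. Write $s\colon\one_C\to c^*\Psi_g\one$ for the first map of~\eqref{eq:ss_triangle_a}, so that $\rom{(a)}$ is the cofibre of $s$, and $t\colon c^!\Psi_g\one(-1)\to\one_C(-2)[-2]$ for the $(-1)$-twist of the second map of~\eqref{eq:ss_triangle_b}, so that $\rom{(b)}(-1)$ is the fibre of $t$. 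Commutativity of the square is precisely the assertion that $\var$ factors as $\rom{(b)}(-1)\circ\lambda\circ\rom{(a)}$ for some $\lambda\colon\one_C(-1)[-1]\to\one_C(-1)[-1]$.

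First I would show that $\var$ annihilates the sub-object $s$, i.e.\ that the composite $\one_C\xrightarrow{s}c^*\Psi_g\one\xrightarrow{\var}c^!\Psi_g\one(-1)$ vanishes. It is enough to observe that $\Hom_{\DA(C)}(\one_C,c^!\Psi_g\one(-1))=0$: the $(-1)$-twist of the fibre sequence~\eqref{eq:ss_triangle_b} sandwiches this group between $\Hom_{\DA(C)}(\one,\one(-1)[-1])$ and $\Hom_{\DA(C)}(\one,\one(-2)[-2])$, both zero by Prop.~\ref{prop:neg_twist_vanishing}. Since $\rom{(a)}$ is the cofibre of $s$, this gives a factorization $\var=\widetilde{\var}\circ\rom{(a)}$ with $\widetilde{\var}\colon\one_C(-1)[-1]\to c^!\Psi_g\one(-1)$, and the lift is unique because $\Hom_{\DA(C)}(\one_C[1],c^!\Psi_g\one(-1))=0$ by the same sandwich and Prop.~\ref{prop:neg_twist_vanishing}.

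Next I would factor $\widetilde{\var}$ through $\rom{(b)}(-1)$, the fibre of $t$. This is automatic: the obstruction $t\circ\widetilde{\var}$ lies in $\Hom_{\DA(C)}(\one_C(-1)[-1],\one_C(-2)[-2])\cong\Hom_{\DA(C)}(\one,\one(-1)[-1])=0$ by Prop.~\ref{prop:neg_twist_vanishing}. Hence $\widetilde{\var}=\rom{(b)}(-1)\circ\lambda$ for a unique $\lambda\colon\one_C(-1)[-1]\to\one_C(-1)[-1]$, so $\var=\rom{(b)}(-1)\circ\lambda\circ\rom{(a)}$, which is the asserted commutative square; here $\Hom_{\DA(C)}(\one_C(-1)[-1],\one_C(-1)[-1])=\Hom_{\DA(C)}(\one_C,\one_C)=\mathbb{Q}^{\pi_0(C)}$, so $\lambda$ is a locally constant rational scalar, a rational number when $C$ is connected. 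I do not expect a genuine obstacle in this proof: it is a formal manipulation of the fibre sequences, and the only nonformal ingredient is the smoothness of $C$, which is where special semistability enters and which is needed to put every Hom-group above in the range of Prop.~\ref{prop:neg_twist_vanishing}. The real content, namely the value of $\lambda$, is deliberately left open here and pinned down only in \S\S~\ref{sec:reduction_to_kummer}--\ref{sec:kummer}, via the reduction to the node $\Spec k[t,x,y]/(xy-t)\to\Affl$ and then to the monodromy of the Kummer motive.
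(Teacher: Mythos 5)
Your proposal is correct and is exactly the argument the paper has in mind: the paper's proof is a one-liner (``Recall the cofibre sequences~\eqref{eq:ss_triangle_a} and~\eqref{eq:ss_triangle_b}. The lemma then follows from the general vanishing result, Prop.~\ref{prop:neg_twist_vanishing}.''), and you have simply spelled out the two long-exact-sequence manipulations that it suppresses. The observation that $\lambda$ is a priori only a locally constant scalar, and the explicit remark that smoothness of $C$ is what licenses Prop.~\ref{prop:neg_twist_vanishing}, are both correct and are implicit in the paper.
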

	
\begin{proof}
	Recall the cofibre sequences~\eqref{eq:ss_triangle_a}
	and~\eqref{eq:ss_triangle_b}.
	The lemma then follows from
	the general vanishing result, Prop.~\ref{prop:neg_twist_vanishing}.
\end{proof}
	
\subsection{Recovering the monodromy of an isolated singularity}
	
\begin{prop}\label{prop:vargoodmorphism}
Let $f\colon X \to \Aff^1$ be a morphism with a good singularity
and retain the notation of Def.~\ref{definition:goodmorphism} and Notation~\ref{notation:ss}.
Suppose that $\pi \colon Y \to X$ is a proper morphism such that
$g = \pi \circ f_r$ is a special semistable morphism.
Let $\lambda$ be as in Prop.~\ref{prop:sss_monodromy}.
Let $\pi_\sigma \colon Y_\sigma \to X_\sigma$ and let $\pi_C \colon C \to p$ be the corresponding restrictions of $\pi$.
$\var_f$ is given then by
\[\begin{tikzcd}
&o^*\Psi_f\one  \arrow[d, "\simeq"]  \arrow[rr, "r/\lambda \cdot \var_f"]& &  o^!\Psi_f\one(-1) \\
& h(D^\circ)\arrow[r, "\alpha"]
&[2em] h(C)(-1)[-1] \arrow[r, "\beta(-1)"]
&[3em] h_{\rom{c}}(D^\circ)(-1) \arrow[u, "\simeq"] &
\end{tikzcd}\]
where the vertical rows are given by~\eqref{eq:computenearbycycles1} and~\eqref{eq:computenearbycycles2},
and the bottom horizontal are the natural maps $\alpha$, $\beta$ of~\eqref{eq:motivetrianglea} and~\eqref{eq:motivetriangleb},
with the closed-open embedding
$ C \hookrightarrow D \hookleftarrow D^\circ $.
\end{prop}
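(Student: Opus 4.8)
The plan is to assemble Prop.~\ref{prop:vargoodmorphism} by concatenating the three comparison results already established. Concretely, I would start from the variation $\var_g$ of the special semistable model $g$, whose structure is pinned down by Prop.~\ref{prop:sss_monodromy}: via the fibre sequences~\eqref{eq:ss_triangle_a}--\eqref{eq:ss_triangle_b}, the map $c^*\Psi_g\one \to c^!\Psi_g\one(-1)$ is identified, up to the scalar $\lambda$, with the composite of $\rom{(a)}$ and $\rom{(b)}(-1)$. By the last sentence of Prop.~\ref{prop:triangle}, these are in turn $c_0^*\alpha \circ c_0^*(\Ay^*)$ and $c_0^*(\Ay^!)\circ c_0^*\beta$, so after pushing forward along $\pi_C$ (and using that $C$ is proper, so that $\pi_{C*}$ of the $\one_C$'s computes $h(C)$, while $\pi_{C*}$ of the nearby-cycle terms, via~\eqref{eq:computenearbycycles1}--\eqref{eq:computenearbycycles2} restricted to $D^\circ$, gives $h(D^\circ)$ and $h_\rom{c}(D^\circ)$), $\lambda\cdot\pi_{C*}\var_g$ becomes the bottom row $\alpha$ followed by $\beta(-1)$ of the asserted diagram.

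Next I would feed this into Prop.~\ref{prop:varcomparison}, which says precisely that $r\cdot\var_f$, transported along the exchange isomorphisms $Ex_*^*$ and $Ex_*^!$ and the equivalence~\eqref{eq:psissreduction}, agrees with $\pi_{C*}c_*\var_g$ (or rather its $o^*$-restriction). Combining with the previous paragraph, $r\cdot\var_f$ is identified with $\lambda$ times the composite $\alpha, \beta(-1)$; dividing by $\lambda$ gives the claimed normalization $r/\lambda\cdot\var_f$. The only subtlety here is to check that the two identifications of $o^*\Psi_f\one$ with $h(D^\circ)$ in play — the one coming from $Ex_*^*$ composed with Prop.~\ref{prop:triangle}'s analysis, and the one labelled $\Ay^*$ in~\eqref{eq:computenearbycycles1} — genuinely coincide; but this is exactly how $\Ay^*$ was constructed in the proof of Prop.~\ref{proposition:compute nearby cycles} (proper base change $o^*\pi_{\sigma*}\simeq\pi'_*u^*$ followed by Cor.~\ref{cor:computability}), so it is a matter of unwinding definitions rather than new input.

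If one wants the general ``good singularity'' case as stated (where the semistable model is obtained by $r$th power base change first), I would additionally invoke Prop.~\ref{prop:basechange} to reduce to $r=1$, picking up the factor $r$ on the monodromy side, exactly as is done in the proof of Prop.~\ref{prop:varcomparison}; and, if the semistable model is only defined Zariski-locally around $o$ (as in Prop.~\ref{proposition:homogeneousssred}), Prop.~\ref{prop:varsmoothcomparison} lets one replace $Y$ by a Zariski-open neighbourhood of $D$ without changing $\var_g$, since $\var$ is local on the source. Assembling: $r\cdot\var_f \xrightarrow{\text{Prop.\ref{prop:varcomparison}}} \pi_{C*}c_*\var_g \xrightarrow{\text{Prop.\ref{prop:sss_monodromy}, Prop.\ref{prop:triangle}}} \lambda\cdot(\beta(-1)\circ\alpha)$, whence the diagram.

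I expect the main obstacle to be bookkeeping rather than conceptual: making sure that all the identifications — the exchange morphisms $Ex_*^*$, $Ex_*^!$ from Prop.~\ref{prop:varcomparison}, the equivalences $\Ay^*$, $\Ay^!$ from~\eqref{eq:computenearbycycles1}--\eqref{eq:computenearbycycles2}, the counit $c_0^*c_{0*}\simeq\id$ from Prop.~\ref{prop:triangle}, and the restriction from $D$ to $D^\circ$ — are mutually compatible, i.e.\ that the big pasted diagram commutes on the nose and not just up to an unlabelled automorphism of $h(C)(-1)[-1]$ (any such automorphism is multiplication by a scalar by Prop.~\ref{prop:neg_twist_vanishing}, which is harmless for the final statement precisely because $\lambda$ is left undetermined here — but one should still be careful that the scalar is $1$ where we claim it is). A detailed verification of this compatibility, tracing $\one_C$ and $\one_C(-1)[-1]$ through each square, is what the full proof needs to spell out.
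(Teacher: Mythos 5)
Your proposal is correct and follows essentially the same route as the paper: substitute Prop.~\ref{prop:sss_monodromy} into Prop.~\ref{prop:varcomparison} to get a factorization of $r/\lambda\cdot\var_f$ through $\pi_{C*}\one_C(-1)[-1]$, then verify the compatibility of the exchange identifications with $\Ay^*$, $\Ay^!$ (this is the commuting diagram the paper spells out, using the final sentence of Prop.~\ref{prop:triangle} for the lower trapezoid). The two side remarks you add are harmless but unnecessary: the $r$th-power base change is already absorbed into Prop.~\ref{prop:varcomparison}, so you need not invoke Prop.~\ref{prop:basechange} again, and Def.~\ref{definition:goodmorphism} provides a \emph{global} semistable model (only the description of its components in Prop.~\ref{proposition:homogeneousssred}(3) is local), so Prop.~\ref{prop:varsmoothcomparison} plays no role here.
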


\begin{proof}
Substituting the result of
Prop.~\ref{prop:sss_monodromy}
into that of Prop.~\ref{prop:varcomparison},
we get a decomposition of
$r/\lambda \cdot \var_f$
as
\[
o^*\Psi_f\one	\to 
\pi_{C *} c^* \Psi_g \one \to
\pi_{C *} \one_C(-1)[-1] \to
\pi_{C *} c^! \Psi_g \one (-1) \to
o^!\Psi_f\one(-1)
\]
We now consider the first two arrows,
together with~\eqref{eq:computenearbycycles1}:
\[\begin{tikzcd}
o^*\Psi_f\one \ar[r, "\simeq"]
& o^*\pi_{\sigma *} \Psi_g \one
	\ar[d, "\simeq", " Ex_D^*"']
	\ar[r, "Ex_C^*", ]
& \pi_{C *} c^* \Psi_g \one
	\ar[dd, "\pi_{C *} \rom{(a)}"]
	\ar[dl, "\eta_{c_0}"]
\\
& \pi'_* u^* \Psi_g \one
	\ar[d, "\simeq", "\pi'_* \Ay^*"']
\\
&	\pi'_* v_* \one_{D^\circ}
	\ar[r, "\pi`_* \alpha"]
& \pi_{C *} \one_C(-1)[-1]
\end{tikzcd}\]
This diagram commutes,
as the upper triangle commutes by definition,
and the lower trapezoid commutes
by the definition of the map $\rom{(a)}$,
see Prop.~\ref{prop:triangle}. 
We can deal similarly with
the last two arrows
and then get as a result
the diagram in the proposition.
\end{proof}

\begin{rmk}
Thus, after the identifications
of~\eqref{eq:computenearbycycles1}--\eqref{eq:computenearbycycles2},
the variation of $f$ at $o$
can be described in terms of $r$, $\lambda$, and the immersions
$C \hookrightarrow D \hookleftarrow D^\circ$.
In the case of a homogeneous singularity
defined by a polynomial $F$ of degree $r$
(Def.~\ref{def:homogeneoussingularity}),
this is the immersions of projective hypersurfaces
\[
\{ T_{n+1} = 0 \}
\hookrightarrow V(F-T_{n+1}^r)
\hookleftarrow \{ T_{n+1} \neq 0 \}
\] 
in $\P^{n+1}$
(Prop.~\ref{proposition:homogeneousssred}).
Note that when $F$ is of degree $2$ this is an embedding of quadrics.

In the quasi-homogeneous case,
we have the analogous embeddings of weighted projective hypersurfaces
in $\P(\underline{a},1)$ of Prop.~\ref{proposition:qhomogeneousssred}.
We use the same notation for both cases,
and we treat them together. 
\end{rmk}

\subsection{Reduction to a one-dimensional semistable family}

We round off this section with one more reduction.

\begin{prop}[*]\label{prop:reduction_to_1dim}
In order to compute $\lambda$ in Prop.~\ref{prop:sss_monodromy},
it is enough to compute it for the following special semistable family:
\[
g' \colon Y' := \Spec k[t,x,y]/(xy-t) \to \Spec k[t] = \Affl.
\]
In particular,
$\lambda$ is a constant that does not depend on $f$
(or on the construction of a semistable model $g$ for $f$),
justifying our notation.
\end{prop}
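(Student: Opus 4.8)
The plan is to show that the constant $\lambda$ appearing in Prop.~\ref{prop:sss_monodromy} is genuinely \emph{local} and \emph{universal}, i.e.\ that it can be extracted from an arbitrary special semistable family by passing to a Zariski (or even étale/henselian) neighbourhood of a point of the double locus $C$, and that any two such local models are related by smooth pullbacks and products with affine space, under which $\lambda$ is invariant. First I would observe that, by Prop.~\ref{prop:sss_monodromy} and Prop.~\ref{prop:neg_twist_vanishing}, $\lambda$ is by construction the scalar by which the variation map acts after the canonical identifications $c^*\Psi_g\one \simeq \one_C \oplus \one_C(-1)[-1]$ etc.; since everything in sight ($\Psi_g$, $N$, the gluing triangles, the purity isomorphisms) is compatible with smooth base change in $g$ by Prop.~\ref{prop:varsmoothcomparison}, the value of $\lambda$ at a point $x \in C$ computed for $g$ equals the value computed for any smooth neighbourhood $Y' \to Y$ with $C' = \rho^{-1}(C)$ — in particular it does not change under restricting to an open, nor under the projection $Y \times \Aa^m \to Y$.

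Next I would invoke the étale-local structure of a special semistable morphism. Near a point of the double locus $C$, a special semistable family $g\colon Y\to \Affl$ with $Y_\sigma = D_1\cup D_2$ and $D_1,D_2$ smooth meeting transversally along $C$ looks, étale-locally (or after completing), like
\[
\Spec \OO_{C,x}[x,y]/(xy - t) \longrightarrow \Spec k[t],
\]
i.e.\ it is the product of the base family $g'\colon \Spec k[t,x,y]/(xy-t)\to\Affl$ with a smooth scheme over $C$. More precisely, there is a smooth morphism from (an open of) $Y$ to $Y' = \Spec k[t,x,y]/(xy-t)$ commuting with the maps to $\Affl$ and pulling back the singular locus $\{x=y=0\}$ of $Y'$ to $C$; this is exactly the standard local model for simple normal crossings of multiplicity one, and it is where I expect to have to be slightly careful about whether a Zariski-local statement suffices or whether I need to upgrade Prop.~\ref{prop:varsmoothcomparison} to étale pullbacks (which is routine, since $\Psi$, $N$ and the six operations all satisfy étale descent and étale base change). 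Granting this, Prop.~\ref{prop:varsmoothcomparison} identifies $\var_g$ (and hence $\lambda$) with $\var_{g'}$.

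The key steps, in order, are: (1) record that $\lambda$ is, tautologically, the scalar through which $\var$ factors once the source and target of Prop.~\ref{prop:sss_monodromy} are trivialized; (2) prove the étale-local normal form: every special semistable $g$ admits, near each point of $C$, a smooth morphism to $g'$ with $C$ pulled back from the double point of $Y'$; (3) apply Prop.~\ref{prop:varsmoothcomparison} (in its étale-local incarnation) to conclude $\lambda_g = \lambda_{g'}$; (4) conclude that $\lambda$ in Prop.~\ref{prop:vargoodmorphism} — which was \emph{a priori} allowed to depend on $f$ and on the chosen model $g$ — is in fact the single constant $\lambda_{g'}$, justifying the notation.

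The main obstacle will be step (2): pinning down the étale-local (or formal-local) structure theorem for special semistable morphisms along the double locus with enough precision that the hypotheses of Prop.~\ref{prop:varsmoothcomparison} are literally met — in particular that the map $Y \to Y'$ can be taken smooth (not merely flat) and that $C = \rho^{-1}(C')$ exactly, rather than up to nilpotents. One clean way around this is to factor: first reduce to the case where $C$ is smooth and connected and $D_1, D_2$ are given, near $C$, by a single equation each (possible Zariski-locally since $Y$ is regular and the $D_i$ are Cartier); then the regular system of parameters cutting out $C$, together with local equations $x$ for $D_1$ and $y$ for $D_2$ and the relation $t = (\text{unit})\cdot xy$, yields the desired smooth morphism to $Y'$ after absorbing the unit. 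I will also need that $\lambda$ is insensitive to the residue field extension / to passing to $\Spec \OO_{C,x}$, which again follows from smooth-pullback invariance since $\Spec \OO_{C,x} \times_k (\text{something}) \to Y$ factors through smooth maps. A full, careful write-up of the normal form is deferred (this is a statement marked with an asterisk).
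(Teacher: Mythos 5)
Your proposal takes essentially the same route as the paper: both rest on the local normal form of a special semistable family along its double locus~$C$ --- the paper cites Ayoub's Lemme~3.3.36 to produce a roof consisting of an \'etale map $U \to Y$ and a smooth map $U \to Y'$ over~$\Affl$ --- and then apply Prop.~\ref{prop:varsmoothcomparison} along both legs to transport~$\lambda$. One small point of clarification: no ``upgrade'' of Prop.~\ref{prop:varsmoothcomparison} to \'etale pullbacks is needed, since \'etale morphisms are smooth and hence already within its scope.
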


\begin{proof}
Let $g \colon Y \to \Affl$ be a special semistable family.
As such, there is an étale map
$U \to Y$
and a smooth map
$U \to Y'$
(see, e.g.,~\cite[Lemme~3.3.36]{ayoub_thesis_2}
in a slightly different setting).
Using Prop.~\ref{prop:varsmoothcomparison}
along both these maps,
we get that the value of $\lambda$ for $g'$
is the same as for $g$. 
More details to appear in a later version.
\end{proof}	

\begin{rmk}\label{rmk:compute_param_via_real}
Illusie computes the monodromy
of the $\ell$-adic nearby cycles complex
for a semistable family~\cite[Thm.~2.6]{illusiePL}.
We can therefore
apply $\ell$-adic realization at this point,
and via
\begin{equation*}
\Q
= \End_{\DA}(\bm1(-1)[-1])
\hookrightarrow \End_{\Q_\ell}(\Q_\ell(-1)[-1])
= \Q_\ell
\end{equation*}
read off the value of $\lambda$.
Sections~\ref{sec:reduction_to_kummer}--\ref{sec:kummer}
can then be skipped,
and Thm.~\ref{thm:general_intro}
is deduced from Prop.~\ref{prop:vargoodmorphism}.
This is however not quite satisfactory from a motivic
point of view,
as Illusie's proof is built upon
an explicit $\ell$-adic construction of the nearby cycles complex
which is not available motivically.
In the next two sections,
we therefore compute $\lambda$
in an alternative way,
so that it boils down instead to the motivic
Construction~\ref{const:unip_nearby_cycles}.
\end{rmk}


\section{%
	Second reduction:
  to the nearby Kummer motive%
}\label{sec:reduction_to_kummer}

Recall the Kummer motive
$\clg{K} \in \DA(\Gm)$
from Def.~\ref{defn:Kummer}.
We call $\Psi_\id\clg{K}$ the \emph{nearby Kummer motive}.
The goal of this section is to
reduce the computation of $\lambda$
in Prop.~\ref{prop:vargoodmorphism}
to the computation of the monodromy on $\Psi_\id\clg{K}$.

The star of this section
is the family
\begin{equation}\label{eq:1dimfam}
\Spec k[t,x,y] / (xy-t) \to \Spec k[t] =: \Affl,
\end{equation}
where $ t \mapsto t $.
It is a one-dimensional,
special semistable family.
The generic fibre is $\Gm \times \Gm$,
say with parameters $t$ and $x$.
The special fibre is given by the coordinate axes of the plane,
$\Spec k[x,y]/(xy)$.
This family has two natural sections:

\begin{rmk}\label{rmk:notation_for_1dim_fam}
The sections
\begin{equation*}
t \mapsto t,
\quad x \mapsto 1,
\quad y \mapsto t
\end{equation*}
and
\begin{equation*}
t \mapsto t,
\quad x \mapsto t,
\quad y \mapsto 1
\end{equation*}
of~\eqref{eq:1dimfam},
restrict to
\begin{equation*}
\id \times 1 \colon \Gm \to \Gm \times \Gm,
\quad \text{and} \quad
\Delta \colon \Gm \to \Gm \times \Gm,
\end{equation*}
respectively,
on the generic fibre.
On the special fibre,
they restrict to
points
\begin{align*}
x_1 \colon x &\mapsto 1
& \text{and} &&
x_2 \colon x &\mapsto 0
\\
y &\mapsto 0
&&&
y &\mapsto 1,
\end{align*}
respectively.
These points $x_1$ and $x_2$,
which are identified with $1$
on the respective axes,
play an important role
later in this section.
\end{rmk}

\begin{rmk}
As the family~\eqref{eq:1dimfam}
is special semistable,
its monodromy factors as in
Prop.~\ref{prop:sss_monodromy},
and we need to compute $\lambda$.
\end{rmk}


\subsection{Computations}

We begin by
reinterpreting the Kummer motive
in terms of the family
\begin{equation*}
f \colon X := \Spec k[t,x,y]/(xy-t) \to \Affl = \Spec k[t].
\end{equation*}
To this end,
consider the following
commutative diagram of vertical cofibre sequences
in $\DA(\Gm)$:
\begin{equation*}\begin{tikzcd}[ampersand replacement=\&]
\bm1(-1)[-1]
	\ar[d, "\begin{pmatrix} 0 \\ 1 \end{pmatrix}" ']
	\ar[r, "e_\rom{K}"]
	\&[3em] \bm1
	\ar[d, "\begin{pmatrix} 0 \\ 1 \end{pmatrix}"]
	\\[3em]
\bm1 \oplus \bm1(-1)[-1]
	\ar[d, "{\begin{pmatrix} 1 & 0 \end{pmatrix}}" ']
	\ar[r, "{\begin{pmatrix} 1 & 0 \\ 1 & e_\rom{K} \end{pmatrix}}"]
	\& \bm1 \oplus \bm1
	\ar[d, "{\begin{pmatrix} 1 & 0 \end{pmatrix}}"]
	\\[2em]
\bm1 \ar[r, "\id"]
	\& \bm1 .
\end{tikzcd}\end{equation*}
As the cofibre of the first row is $\clg{K}$
(Def.~\ref{defn:Kummer}),
and the cofibre of the last row is $0$,
we have that $\clg{K}$ sits in the following cofibre sequence:
\begin{equation}\begin{tikzcd}[ampersand replacement=\&]
\bm1 \oplus \bm1(-1)[-1]
	\ar[r, "{\begin{pmatrix} 1 & 0 \\ 1 & e_\rom{K} \end{pmatrix}}"]
	\&[3em] \bm1 \oplus \bm1
	\ar[r, "{\begin{pmatrix} -c_\rom{K} & c_\rom{K} \end{pmatrix}}"]
	\&[3em] \clg{K}.
\end{tikzcd}\end{equation}
(cf.~the proof of~\cite[Lemme~3.6.41]{ayoub_thesis_2}).
We now consider an avatar of this sequence over $X_\eta = \Gm \times \Gm$.

\begin{defn}\label{defn:J}
There are natural maps
\begin{equation*}
\bm1 \to (\id\times1)_*\bm1, \quad
\bm1 \to \Delta_*\bm1,
\end{equation*}
in $\DA(\Gm\times\Gm)$.
Define $\clg{J}$ to be the cofibre of their sum:
\begin{equation}\label{eq:defJ}
\bm1 \to (\id\times1)_*\bm1 \oplus \Delta_*\bm1 \to \clg{J}.
\end{equation}
\end{defn}

\begin{rmk}\label{rmk:K_is_pushf_J}
By the above,
and the definition of the Kummer map,
we then have that $\clg{K} = \rom{pr}_{1,*}\clg{J}$.
\end{rmk}

\begin{rmk}\label{rmk:alt_J}
Denote the open complements of $\id\times1$ and $\Delta$
by $j_\rom{h}$ and $j_\rom{d}$,
respectively.
Then,
$\clg{J} = (j_{\rom{h},!}\bm1 \oplus j_{\rom{d},!}\bm1)[1]$.
\end{rmk}

\begin{prop}\label{prop:nearby_kummer_good_iso}
The map
\begin{equation*}
\beta_f \colon \Psi_\id\clg{K} \to f_{\sigma,*}\Psi_f\clg{J}
,
\end{equation*}
(from Prop.~\ref{prop:nearby_cycles_pushf_pullb})
is an isomorphism.
\end{prop}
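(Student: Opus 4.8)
The plan is to deduce the claim from the defining fibre sequence~\eqref{eq:defJ} of $\clg{J}$, handling the two ``section terms'' formally and the remaining one by an explicit computation of the nearby cycles of the node.

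Since $\Psi_f$, $\Psi_\id$, $f_{\eta *}$ and $f_{\sigma *}$ are exact and $\beta_f$ is a natural transformation, applying it to~\eqref{eq:defJ} yields a morphism of fibre sequences; so it suffices to show that $\beta_f$ is an equivalence at each of $\bm1_{X_\eta}$, $(\id\times1)_*\bm1_{\Gm}$ and $\Delta_*\bm1_{\Gm}$. For the latter two I would use properness: by Rmk.~\ref{rmk:notation_for_1dim_fam}, $\id\times1$ and $\Delta$ are the restrictions to $X_\eta$ of sections $\sigma_{\rom{h}},\sigma_{\rom{d}}\colon\Affl\to X$ of $f$, which, being sections of a separated morphism, are closed immersions, hence proper. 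Applying the compatibility of the transformations $\beta$ with composition to $\Affl\xrightarrow{\sigma_{\rom{h}}}X\xrightarrow{f}\Affl$ (so $f\circ\sigma_{\rom{h}}=\id_\Affl$), the map $\beta_f$ evaluated at $\sigma_{{\rom{h}},\eta *}\bm1=(\id\times1)_*\bm1_{\Gm}$ becomes a composite of the tautological equivalence $\beta_\id$ with $\beta_{\sigma_{\rom{h}}}$ --- an equivalence by Prop.~\ref{prop:nearby_cycles_pushf_pullb}(2) since $\sigma_{\rom{h}}$ is proper --- up to the functorial identifications $f_{\eta *}\sigma_{{\rom{h}},\eta *}\simeq\bm1_{\Gm}$ and $f_{\sigma *}\sigma_{{\rom{h}},\sigma *}\simeq\id$. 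Thus both sides are $\bm1_\pt$ and $\beta_f$ is an equivalence there, and likewise at $\Delta_*\bm1_{\Gm}$ (with $x_2$ in place of $x_1$).

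The substantive term is $\bm1_{X_\eta}=f_\eta^*\bm1_{\Gm}$, where $f_\eta=\rom{pr}_1$ is not proper, so nothing is formal; I would compute both sides. On the source, $\rom{pr}_1\colon\Gm\times\Gm\to\Gm$ is the base change of $\Gm\to\pt$ along itself, so base change gives $f_{\eta *}\bm1_{X_\eta}\simeq\bm1_{\Gm}\oplus\bm1_{\Gm}(-1)[-1]$ (using $h(\Gm)=\bm1\oplus\bm1(-1)[-1]$, Rmk.~\ref{rmk:motiveGm}), whence $\Psi_\id f_{\eta *}\bm1_{X_\eta}\simeq\bm1_\pt\oplus\bm1_\pt(-1)[-1]$ by Prop.~\ref{prop:psione} and compatibility of $\Psi_\id$ with Tate twists and shifts. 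On the target, identify $X\cong\Spec k[x,y]$ via $t=xy$; then $f$ is smooth on $X\setminus\{o\}$, where $o$ is the node of $X_\sigma$, so by compatibility of $\Psi$ with (open, hence smooth) pullback, Prop.~\ref{prop:nearby_cycles_pushf_pullb}(1), together with Prop.~\ref{prop:psione}, the restriction of $\Psi_f\bm1_{X_\eta}$ to $X_\sigma\setminus\{o\}=D_1^\circ\sqcup D_2^\circ\cong\Gm\sqcup\Gm$ is the unit. Applying $f_{\sigma *}$ to the localization fibre sequence for $o\hookrightarrow X_\sigma$ (and using $f_\sigma\circ o=\id_\pt$) gives a fibre sequence
\[
o^!\Psi_f\bm1_{X_\eta}\longrightarrow f_{\sigma *}\Psi_f\bm1_{X_\eta}\longrightarrow h(\Gm)^{\oplus2},
\]
where $o^!\Psi_f\bm1_{X_\eta}$ is computed by Prop.~\ref{prop:triangle}. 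Using Prop.~\ref{prop:neg_twist_vanishing} together with the vanishing of $\Hom_{\DA(\pt)}(\bm1(-1),\bm1[j])$ for $j\neq1$, all the sequences involved split and one reads off $f_{\sigma *}\Psi_f\bm1_{X_\eta}\simeq\bm1_\pt\oplus\bm1_\pt(-1)[-1]$, matching the source.

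I expect the main obstacle to be not the computation of the two sides but the verification that $\beta_f$ actually \emph{realizes} this equivalence on $\bm1_{X_\eta}$, rather than merely joining two abstractly isomorphic objects. On the weight-$0$ summand this follows from naturality of $\beta_f$ along the units $\bm1_{X_\eta}\to(\id\times1)_*\bm1$ and $\bm1_{X_\eta}\to\Delta_*\bm1$: after $f_{\eta *}$ and $\Psi_\id$ these units become the projections onto the first summand (up to $\Psi_\id(e_\rom{K})$ on the second, as in the diagram preceding Def.~\ref{defn:J}), and after $f_{\sigma *}\Psi_f$ they become the restrictions to $x_1$, $x_2$; since $\beta_f$ is already known to be invertible on the codomains, its weight-$0$ component on $\bm1_{X_\eta}$ is forced to be invertible. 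The weight-$2$ (vanishing-cycle) component is the delicate point: here I would match $\beta_f$ with the explicit description of $\Psi_f\bm1_{X_\eta}$, furnished by Cor.~\ref{cor:computability} and the gluing along $o$, as the object on $X_\sigma$ glued from the punctured-branch motives $v_{i *}\bm1_{D_i^\circ}$; the class $d\log x$, defined on all of $X_\eta$ and generating the weight-$2$ part on both sides, then visibly maps to a generator, forcing $\beta_f$ to be invertible on weight $2$ as well. (Alternatively, the weight-$2$ statement can be obtained by dualizing and invoking the compact-support counterpart of $\beta$.) Assembling the three terms gives the claim.
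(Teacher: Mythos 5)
Your route is genuinely different from the paper's and runs into a gap at the place you yourself flag. You decompose $\clg{J}$ by its defining cofibre sequence~\eqref{eq:defJ} and handle the terms one by one; the paper instead factors $f$ geometrically, as the composite of the open immersion $g\colon X=\Bl_0(\Aff^2)\setminus Z\hookrightarrow\Bl_0(\Aff^2)$, the blowup $h\colon\Bl_0(\Aff^2)\to\Aff^2$, and the projection $\rom{pr}_1$, and checks that $\beta$ (or its exceptional partner $\nu$) is invertible step by step. For the two section terms $(\id\times1)_*\bm1$ and $\Delta_*\bm1$ your argument via properness of the sections and compatibility of $\beta$ with composition is sound, and is essentially the reduction the paper also uses (implicitly) to pass from $\clg{J}$ to $\bm1_{X_\eta}$.

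The problem is in the remaining term $\bm1_{X_\eta}$. Computing that both sides are abstractly $\bm1\oplus\bm1(-1)[-1]$ is straightforward, but — as you say — that is not the point. The map $\beta_f$ there is a lower-triangular matrix $\bigl(\begin{smallmatrix}a&0\\ \ast&b\end{smallmatrix}\bigr)$, the entry above the diagonal vanishing by Prop.~\ref{prop:neg_twist_vanishing}; your naturality argument pins down $a$ but not $b$, and it is precisely $b\neq0$ that carries the content of the proposition. The sketch you give for this entry --- matching $\beta_f$ against the explicit gluing description of $\Psi_f\bm1_{X_\eta}$ and tracking a generator such as $d\log x$, or invoking duality and ``the compact-support counterpart of $\beta$'' --- is not a proof: the map dual to $\beta_f$ involves the exceptional transformation $\mu_f$, and $f$ is neither smooth nor proper, so there is no free isomorphism to appeal to. The paper closes exactly this hole by trading the open immersion $g$ for its closed complement $d\colon Z\hookrightarrow\Bl_0(\Aff^2)$ via gluing, reducing the $\bm1$-term to showing that $\nu_d\colon\Psi_{\pi_Z}d_\eta^!\bm1\to d_\sigma^!\Psi_\pi\bm1$ is invertible; working locally near $Z$ identifies $d$ with the zero section $s$ of a trivial line bundle, where $s^!\rom{pr}_1^*\simeq(\blank)\otimes\bm1(1)[2]$ is a twist and a shift that $\Psi$ commutes with, and the rest follows from properness of $h$. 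That mechanism --- passing from the open piece to the closed complement so that the check becomes a local purity statement --- is the idea your proposal is missing, and I do not see how to complete the ``generator'' or ``duality'' sketch without reintroducing it.
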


\begin{proof}
Note that
\begin{equation*}
X = \Bl_0\Aff^2 \setminus Z,
\end{equation*}
where $Z$ is the strict transform of $\Affl \times 0$.
This indicates we should break our proof into two steps.

\emph{Step 1.}
Consider the inclusion map
\begin{equation*}
g \colon X
= \Bl_0(\Aff^2) \setminus Z
\hookrightarrow \Bl_0(\Aff^2)
,
\end{equation*}
and let $\pi$ denote the map $\Bl_0(\Aff^2) \to \Affl$.
We get a map
\begin{equation*}
\beta_g \colon \Psi_\pi g_{\eta,*}\clg{J} \to g_{\sigma,*}\Psi_f\clg{J},
\end{equation*}
and the goal of this step is to show that it is an isomorphism.
Due to the cofibre sequence \eqref{eq:defJ},
this reduces to showing that
\begin{equation*}
\beta_g \colon \Psi_\pi g_{\eta,*}\bm1 \to g_{\sigma,*}\Psi_f\bm1,
\end{equation*}
is an isomorphism.
As $\alpha_g$
is an isomorphism,
it is enough to show that
\begin{equation*}
\alpha_g^{-1}\circ\beta_g \colon
  \Psi_\pi \bm1 g_{\eta,*}g_\eta^*\bm1
  \to g_{\sigma,*}g_\sigma^* \Psi_\pi\bm1,
\end{equation*}
is an isomorphism.
Denote the inclusion of the closed complement of $g$ by
\begin{equation*}
d \colon Z \hookrightarrow \Bl(\Aff^2).
\end{equation*}
Gluing gives us fibre sequences
\begin{equation*}
d_{\star,!}d_\star^! \to \id \to g_{\star,*}g_\star^*,
\end{equation*}
(where $\star$ can be omitted or set to $\sigma$ or $\eta$)
and the natural transformations $\alpha_g$ and $\beta_g$
in Prop.~\ref{prop:nearby_cycles_pushf_pullb},
as well as their exceptional variants $\nu_d$ and $\mu_d$,
then produce a morphism of fibre sequences
\begin{equation*}\begin{tikzcd}
\Psi_\pi d_{\eta,!}d_\eta^!\bm1 \ar[d] \ar[r]
  & d_{\sigma,!}d_\sigma^! \Psi_\pi\bm1 \ar[d] \\
\Psi_\pi \bm1 \ar[d] \ar[r]
  & \Psi_\pi\bm1 \ar[d] \\
\Psi_\pi \bm1 g_{\eta,*}g_\eta^*\bm1 \ar[r]
  & g_{\sigma,*}g_\sigma^* \Psi_\pi\bm1,
\end{tikzcd}\end{equation*}
from where we reduce to showing that the topmost horizontal arrow
($\nu_d \circ \mu_d^{-1}$ in the notation of \cite[Prop.~3.1.19]{ayoub_thesis_2})
is an isomorphism.
What is left to show is therefore that
\begin{equation*}
\nu_d \colon \Psi_{\pi_Z} d_\eta^!\bm1
  \to d_\sigma^! \Psi_\pi\bm1,
\end{equation*}
where $\pi_Z \colon Z \to \Affl$
denotes the restriction of $\pi$.
We can check this locally
around $Z$,
and in particular,
we can remove the strict transform of $0\times\Affl$.
Thus, Step~1 is reduced to the following claim:

\begin{claim}
Consider 
the trivial vector bundle $\rom{pr}_1 \colon \Aff^2 \to \Affl$
and its zero section $s \colon \Affl \to \Aff^2$.
The morphism
\begin{equation*}
\nu_s \colon \Psi_{\id}s_\eta^!\bm1
  \to s_\sigma^!\Psi_{\rom{pr}_1}\bm1,
\end{equation*}
is an isomorphism.
\end{claim}

\begin{proof}[Proof of claim.]
Note that
$\alpha_{\rom{pr}_1}$ is an isomorphism,
so that it is enough to check that the composition
\begin{equation*}
\alpha_{\rom{pr}_1}^{-1} \circ \nu_s
  \colon \Psi_{\id}s_\eta^!\bm1
  = \Psi_{\id}s_\eta^!\rom{pr}_{1,\eta}^*\bm1
  \to s_\sigma^!\rom{pr}_{1,\sigma}^*\Psi_{\id}\bm1,
\end{equation*}
is an isomorphism.
But
$s_\star^!\rom{pr}_{1,\star}^*
  \simeq (\blank)\otimes\bm1(1)[2]$
is just a twist and a shift,
which the nearby cycles functor commutes with
(see~\cite[Prop.~3.1.7]{ayoub_thesis_2}).
\end{proof}

\emph{Step 2.}
We now prove the thesis
using the previous step.
Consider the blowup morphism
\begin{equation*}
h \colon \Bl_0(\Aff^2) \to \Aff^2.
\end{equation*}
It is a map of $\Affl$-schemes,
where the structure morphism from the latter to $\Affl$ is
$\rom{pr}_1 \colon \Aff^2 = \Affl\times\Affl \to \Affl$.
We get an isomorphism
\begin{equation*}
\beta_h \colon \Psi_{\rom{pr}_1} h_{\eta,*}
	\xrightarrow{\sim} h_{\sigma,*}\Psi_{\pi},
\end{equation*}
as $h$ is proper (Prop.~\ref{prop:nearby_cycles_pushf_pullb}).
There is a commutative diagram
\begin{equation*}\begin{tikzcd}
\Psi_\id f_{\eta,*} \ar[d, equals] \ar[r, "\beta_f"]
  & f_{\sigma,*}\Psi_{f} \\
\Psi_\id\rom{pr}_{1,\eta,*}h_{\eta,*}g_{\eta,*}
	\ar[d, "\beta_{\rom{pr}_1}"', "\sim" {rotate=90, anchor=north}]
  & \rom{pr}_{1,\sigma,*}h_{\sigma,*}g_{\sigma,*}\Psi_f
	\ar[u, equals] \\
\rom{pr}_{1,\sigma,*}\Psi_{\rom{pr}_1} h_{\eta,*}g_{\eta,*}
	\ar[r, "\beta_h" ', "\sim"]
  & \rom{pr}_{1,\sigma,*}h_{\sigma,*}\Psi_\pi g_{\eta,*}
	\ar[u, "\beta_g" '], \\
\end{tikzcd}\end{equation*}
and we conclude using Step 1.
\end{proof}

We need a good understanding of 
$f_{\sigma,*}\Psi_f\clg{J}$.
The rest of this subsection
is dedicated to that.

\begin{lem}
We have a fibre sequence
\begin{equation}\label{eq:pushdown_nearby_as_fibre}
f_{\sigma,*} \Psi_f \bm1
\to f_{\sigma,*} u_{1,*} v_{1,*} \bm1
	\oplus f_{\sigma,*} u_{2,*} v_{2,*} \bm1
\to f_{\sigma,*} c_*c^! \Psi_f \bm1[1].
\end{equation}
\end{lem}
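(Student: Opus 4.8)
The plan is to read the sequence off from the recollement of $\DA(X_\sigma)$ along the closed point $C$, after recording one feature of this very special situation. Since $X_\sigma = D_1 \cup D_2$ with $C = D_1 \cap D_2$ a single closed point, the open strata $D_1^\circ = D_1 \setminus C$ and $D_2^\circ = D_2 \setminus C$ are actually \emph{open} in $X_\sigma$ (each is the complement of the other branch, e.g.\ $D_1^\circ = X_\sigma \setminus D_2$), they are disjoint, and their union is exactly the open complement $U := X_\sigma \setminus C$. Thus, writing $j \colon U \hookrightarrow X_\sigma$, we have $U = D_1^\circ \sqcup D_2^\circ$, and $j$ factors through the two open immersions $u_i v_i \colon D_i^\circ \hookrightarrow X_\sigma$.

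Granting this, I would apply the gluing (dual localization) fibre sequence of \cite[Lemme~1.4.6]{ayoub_thesis_1} for $c \colon C \hookrightarrow X_\sigma \hookleftarrow U \colon j$ to the object $\Psi_f \bm1$, getting
\[
c_* c^! \Psi_f \bm1 \to \Psi_f \bm1 \to j_* j^* \Psi_f \bm1
\]
in $\DA(X_\sigma)$. Because $U$ is the disjoint union of the $D_i^\circ$, the right-hand term splits as $u_{1,*} v_{1,*} (u_1 v_1)^* \Psi_f \bm1 \oplus u_{2,*} v_{2,*} (u_2 v_2)^* \Psi_f \bm1$. I then invoke the computation made inside the proof of Cor.~\ref{cor:computability}, namely $v_i^* u_i^* \Psi_f \bm1 \simeq \bm1_{D_i^\circ}$, to identify this term with $u_{1,*} v_{1,*} \bm1 \oplus u_{2,*} v_{2,*} \bm1$. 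Rotating the triangle yields the fibre sequence $\Psi_f \bm1 \to u_{1,*} v_{1,*} \bm1 \oplus u_{2,*} v_{2,*} \bm1 \to c_* c^! \Psi_f \bm1[1]$, and applying the exact functor $f_{\sigma,*}$ (a right adjoint between stable $\infty$-categories, hence preserving fibre sequences) gives the claimed sequence \eqref{eq:pushdown_nearby_as_fibre}, with the first map the sum of the two unit maps $\Psi_f\bm1 \to u_{i,*}v_{i,*}\bm1$.

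There is no serious obstacle here: the argument is essentially formal once the recollement is in place. The only points needing care are the structural observation that the two open strata $D_i^\circ$ exhaust $X_\sigma \setminus C$ disjointly — which is particular to the two-branch special semistable family of \eqref{eq:1dimfam} and would not hold verbatim for a general semistable model — and the bookkeeping that Cor.~\ref{cor:computability} is phrased for $u_i^* \Psi_f \bm1$ on $D_i$, so one should cite the intermediate identification $v_i^* u_i^* \Psi_f \bm1 \simeq \bm1$ from its proof rather than the corollary as stated.
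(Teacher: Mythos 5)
Your proof is correct and matches the approach the paper intends: the paper's own proof is the one-liner ``This is just gluing and Cor.~\ref{cor:computability},'' and you have filled in exactly that — the dual localization triangle for $C \hookrightarrow X_\sigma \hookleftarrow D_1^\circ \sqcup D_2^\circ$, the identification of the middle term via Cor.~\ref{cor:computability}, and exactness of $f_{\sigma,*}$. (One small simplification: you need not reach into the proof of the corollary for $v_i^*u_i^*\Psi_f\bm1\simeq\bm1$; applying $u_{i,*}$ directly to the stated isomorphism $u_i^*\Psi_f\bm1\simeq v_{i,*}\bm1$, together with $u_{i,*}u_i^*\Psi_f\bm1\simeq (u_iv_i)_*(u_iv_i)^*\Psi_f\bm1$ from Prop.~\ref{prop:computability}, gives the same identification.)
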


\begin{proof}
This is just gluing
and Cor.~\ref{cor:computability}.
\end{proof}

\begin{prop}[*]\label{prop:1dim_computations}\ 
\begin{enumerate}
\item
There are natural isomorphisms
$f_{\sigma,*} u_{i,*} v_{i,*} \bm1
	\simeq \bm1 \oplus \bm1(-1)[-1]$.

\item
There is a natural isomorphism
$f_{\sigma,*} c_*c^! \Psi_f \bm1[1]
	\simeq \bm1 \oplus \bm1(-1)[-1]$.

\item
Under the above identifications,
both components of the right-hand map
in~\eqref{eq:pushdown_nearby_as_fibre}
are identity.
\end{enumerate}
In particular,
there is a natural isomorphism
$f_{\sigma,*} \Psi_f \bm1
	\simeq \bm1 \oplus \bm1(-1)[-1]$.
\end{prop}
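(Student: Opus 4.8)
The plan is to compute each of the three pieces appearing in the fibre sequence~\eqref{eq:pushdown_nearby_as_fibre} explicitly, using only the geometry of the one-dimensional family $f\colon \Spec k[t,x,y]/(xy-t)\to\Affl$ together with the already-established tools (Cor.~\ref{cor:computability}, Rmk.~\ref{rmk:motiveGm}, and purity). For part~(1), recall from Notation~\ref{notation:ss} that $D_i^\circ$ is one of the two punctured coordinate axes, i.e.\ $\Gm$, and that $D_i^\circ\to\pt$ factors through the section (the point $x_i$ of Rmk.~\ref{rmk:notation_for_1dim_fam}). So $f_{\sigma,*}u_{i,*}v_{i,*}\bm1$ computes $h(D_i^\circ)=h(\Gm)$, and Rmk.~\ref{rmk:motiveGm} gives $h(\Gm)=\bm1\oplus\bm1(-1)[-1]$, with the splitting $\bm1\to h(\Gm)$ furnished by the rational point $x_i$. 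I would be careful to record this splitting, since part~(3) needs the identifications to be the canonical ones coming from these points.

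For part~(2): the intersection $C=D_1\cap D_2$ is the origin $\{x=y=0\}$, a single $k$-point, so $c_*c^!\Psi_f\bm1$ lives over $C\simeq\pt$, and $f_{\sigma,*}c_*c^!\Psi_f\bm1$ is just $c^!\Psi_f\bm1$ viewed over $k$. By the fibre sequence~\eqref{eq:ss_triangle_b} of Prop.~\ref{prop:triangle} (applied to either branch — both give the same $C$), we have $\bm1[-1]\to c^!\Psi_f\bm1\to\bm1(-1)[-2]$, and Prop.~\ref{prop:neg_twist_vanishing} forces this sequence to split, yielding $c^!\Psi_f\bm1\simeq\bm1[-1]\oplus\bm1(-1)[-2]$; shifting by $[1]$ gives $\bm1\oplus\bm1(-1)[-1]$ as claimed. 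Alternatively one can compute $c^!\Psi_f\bm1$ directly via purity on the branch $D\simeq\Aff^1$: $c^!\Psi_f\bm1\simeq c_0^!v_!\bm1_{D^\circ}$ by Cor.~\ref{cor:computability}, and $D^\circ\hookrightarrow D\simeq\Aff^1$ is the inclusion of $\Gm=\Aff^1\setminus\{0\}$ with $C=\{0\}$, so $c_0^!v_!\bm1$ sits in a localization sequence whose pieces are $\bm1[-2]$ (from $h_c(\Gm)$-type terms) and $\bm1(-1)[-2]$; I would pick whichever computation makes the comparison in part~(3) most transparent, probably the one via the blowup picture $X=\Bl_0\Aff^2\setminus Z$ already used in Prop.~\ref{prop:nearby_kummer_good_iso}.

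The content of the proposition, and the step I expect to be the main obstacle, is part~(3): that under these identifications both components of the map $f_{\sigma,*}u_{1,*}v_{1,*}\bm1\oplus f_{\sigma,*}u_{2,*}v_{2,*}\bm1\to f_{\sigma,*}c_*c^!\Psi_f\bm1[1]$ are the identity on $\bm1\oplus\bm1(-1)[-1]$. The map is the boundary in the gluing triangle $c_*c^!\to\id\to (j_1\sqcup j_2)_*(j_1\sqcup j_2)^*$ on $Y_\sigma$, restricted via $\Psi_f\bm1$ and pushed to $\pt$. The strategy is to unwind this boundary map on each branch separately: on $D_i\simeq\Aff^1$ the relevant piece is the boundary $h(\Gm)\to h(\{0\})(-1)[-1]$ coming from $\{0\}\hookrightarrow\Aff^1\hookleftarrow\Gm$ together with the unit $\bm1=h(\Aff^1)\to h(\Gm)$ — and here the point is exactly that $h(\Aff^1)\to h(\Gm)$ is split injective onto the $\bm1$-summand (so that component of the composite $h(\Gm)\to h(\{0\})(-1)[-1]$ restricted from $h(\Aff^1)$ is zero, while the $\bm1(-1)[-1]$-summand of $h(\Gm)$, which is the "new" class not coming from $\Aff^1$, maps isomorphically to $h(\{0\})(-1)[-1]=\bm1(-1)[-1]$). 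The two signs/normalizations have to be matched so that both branch-contributions land as $+\id$ rather than $\pm\id$; getting this bookkeeping right — in particular checking that the identification of part~(2) is compatible with the ones from part~(1) via both inclusions $C\hookrightarrow D_1$ and $C\hookrightarrow D_2$ simultaneously — is the delicate point. (This is flagged with an asterisk in the paper, so a complete verification of these compatibilities is deferred; I would present the computation of each summand and the structure of the argument, and leave the sign-chase to a later version.) Once part~(3) holds, the fibre sequence~\eqref{eq:pushdown_nearby_as_fibre} reads $f_{\sigma,*}\Psi_f\bm1\to (\bm1\oplus\bm1(-1)[-1])^{\oplus2}\xrightarrow{(\id,\id)}\bm1\oplus\bm1(-1)[-1]$, whose fibre is visibly $\bm1\oplus\bm1(-1)[-1]$, giving the final assertion.
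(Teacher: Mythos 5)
Your proof for part (1) matches the paper exactly. For parts (2) and (3) your route is genuinely different, and there are two issues worth flagging.

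In part (2), you identify $c^!\Psi_f\bm1[1]$ by splitting the shifted sequence $\bm1\to c^!\Psi_f\bm1[1]\to\bm1(-1)[-1]$ and invoking Prop.~\ref{prop:neg_twist_vanishing}. But the obstruction to splitting is a class in $\Hom(\bm1(-1)[-1],\bm1[1])\simeq\Hom(\bm1,\bm1(1)[2])$, which has a \emph{positive} twist drop; Prop.~\ref{prop:neg_twist_vanishing} only kills negative twist drops and does not apply. The splitting does hold, but the correct input is $\Hom(\bm1,\bm1(1)[2])=\CH^1(\Spec k)_\Q=0$, a different vanishing. Also be aware that a splitting obtained this way is only well-defined up to an automorphism $\begin{pmatrix} a & b \\ 0 & d\end{pmatrix}$ of $\bm1\oplus\bm1(-1)[-1]$, so part~(3) is not yet meaningful until you pin down which splitting you mean. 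The paper avoids this by a cleverer choice: it takes the identification in~(2) to be \emph{the first component of the boundary map itself} (the natural map $f_{\sigma,*}u_{1,*}u_1^*\Psi_f\bm1\to f_{\sigma,*}c_*c^!\Psi_f\bm1[1]$, which it asserts is an isomorphism), so that the $D_1$-component of~(3) becomes the identity tautologically.

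In part (3), your description of the branch-wise boundary as ``$h(\Gm)\to h(\{0\})(-1)[-1]$'' has the wrong target: the map lands in $c^!\Psi_f\bm1[1]\simeq\bm1\oplus\bm1(-1)[-1]$, which is two Tate summands, and what you write down only accounts for the $\bm1(-1)[-1]$-component (the purity boundary on the constant sheaf). The subtle point is precisely the $\bm1$-summand, which in the paper's framing comes from $c^!$ of the \emph{nearby cycles} (not of $\bm1_{D_1}$), and you never address the $\bm1$-component of your map; without it the claim of ``identity'' cannot be verified. You do correctly identify that matching the $D_1$- and $D_2$-normalizations is the delicate step, which mirrors what the paper also defers. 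The paper's argument for the $D_2$-component is quite different from yours: after the $D_1$-side is tautological, it writes the $D_2$-component as an upper-triangular matrix $\begin{pmatrix}1 & a\\ 0 & b\end{pmatrix}$ (using the algebra structure and the vanishing of negative twists), and then uses the involution swapping $D_1\leftrightarrow D_2$ to constrain $a$ and $b$, reducing the whole question to showing $b=1$. That constraint argument substantially narrows what must be checked by hand, whereas your plan requires a full computation of both components of the boundary map on both branches. Either route could in principle be completed, but as written yours has the two concrete gaps above.
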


\begin{proof}
1.\ \ 
Since $D_i^\circ \simeq \Gm$,
this is just Rmk.~\ref{rmk:motiveGm}.

2.\ \ 
The natural map
\begin{equation*}
f_{\sigma,*} u_{1,*} u_1^* \Psi_f \bm1
	\to f_{\sigma,*} c_*c^! \Psi_f \bm1[1],
\end{equation*}
is an isomorphism.
This will be explained in a later version.

3.\ \ 
By the choice of map in the previous part,
this is immediately true for the first component.
For the second component,
we need to show that the following diagram commutes:
\begin{equation*}\begin{tikzcd}
f_{\sigma, *} u_{2, *} v_{2, *} \bm1
	\ar[d, equals]
	\ar[r]
	& f_{\sigma, *} c_*c^! \Psi_f \bm1[1]
	\ar[r]
	& f_{\sigma, *} u_{1, *} v_{1, *} \bm1
	\ar[d, equals]
	\\
\bm1 \oplus \bm1(-1)[-1]
	\ar[rr, "\id" ']
  && \bm1 \oplus \bm1(-1)[-1] .
\end{tikzcd}\end{equation*}
The map is given by a matrix
\begin{equation*}
\begin{pmatrix}
1 & a \\
0 & b
\end{pmatrix}
\end{equation*}
where the $1$ comes from the map being an algebra morphism,
and the $0$ comes from Prop.~\ref{prop:neg_twist_vanishing}.
Since we can swap the $D_i$,
this matrix has to be its own inverse,
i.e.,
$-a/b = a$
and $1/b = b$.
Thus,
$b = \pm 1$
and $a$ is either $0$ (when $b=1$)
or arbitrary (when $b=-1$).
We will show that $b=1$ in a later version.
\end{proof}

\begin{cor}\label{cor:comp_pushf_nearby_J}
We have that
$f_{\sigma,*} \Psi_f \clg{J} = \bm1 \oplus \bm1(-1)$.
\end{cor}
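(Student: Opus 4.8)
The plan is to deduce the corollary directly from the last proposition together with the structure of $\clg{J}$ as described in the preceding remarks. Recall from Remark~\ref{rmk:alt_J} that $\clg{J} = (j_{\rom{h},!}\bm1 \oplus j_{\rom{d},!}\bm1)[1]$, and from Definition~\ref{defn:J} that $\clg{J}$ fits into the cofibre sequence
\begin{equation*}
\bm1 \to (\id\times1)_*\bm1 \oplus \Delta_*\bm1 \to \clg{J}.
\end{equation*}
First I would apply the exact functor $f_{\sigma,*}\Psi_f$ to this sequence. Since $\Psi_f$ is exact and $f_{\sigma,*}$ is exact, we obtain a fibre sequence
\begin{equation*}
f_{\sigma,*}\Psi_f\bm1 \to f_{\sigma,*}\Psi_f(\id\times1)_*\bm1 \oplus f_{\sigma,*}\Psi_f\Delta_*\bm1 \to f_{\sigma,*}\Psi_f\clg{J}.
\end{equation*}

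The two middle terms are handled via Proposition~\ref{prop:nearby_cycles_pushf_pullb}(2): both $\id\times1 \colon \Gm \to \Gm\times\Gm$ and $\Delta \colon \Gm \to \Gm\times\Gm$ are sections of projections, hence proper (they are closed immersions), so $\beta$ is an isomorphism and $\Psi_f(\id\times1)_* \simeq (x_1)_*\Psi_{\rom{pr}_1}$ and $\Psi_f\Delta_* \simeq (x_2)_*\Psi_{\rom{pr}_1}$, where $x_1, x_2$ are the points on the special fibre described in Remark~\ref{rmk:notation_for_1dim_fam}. These families restrict to smooth (in fact, isomorphism) families over $\Gm$ after base change, so by Proposition~\ref{prop:nearby_cycles_pushf_pullb}(1) and Proposition~\ref{prop:psione} we get $f_{\sigma,*}\Psi_f(\id\times1)_*\bm1 \simeq \bm1$ and similarly for $\Delta$. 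So the middle term of the fibre sequence is $\bm1 \oplus \bm1$.

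Combining this with the last proposition, which gives $f_{\sigma,*}\Psi_f\bm1 \simeq \bm1 \oplus \bm1(-1)[-1]$, the fibre sequence reads
\begin{equation*}
\bm1 \oplus \bm1(-1)[-1] \to \bm1 \oplus \bm1 \to f_{\sigma,*}\Psi_f\clg{J}.
\end{equation*}
The left-hand map sends the $\bm1$-summand of $f_{\sigma,*}\Psi_f\bm1$ diagonally into $\bm1 \oplus \bm1$ (this is the ``algebra morphism'' / unit compatibility, as in part 3 of the proposition), and kills the $\bm1(-1)[-1]$-summand for degree reasons (Proposition~\ref{prop:neg_twist_vanishing}). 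Taking the cofibre, the cokernel of the diagonal $\bm1 \hookrightarrow \bm1\oplus\bm1$ is $\bm1$, and the shifted contribution of the killed $\bm1(-1)[-1]$-summand is $\bm1(-1)$, giving $f_{\sigma,*}\Psi_f\clg{J} \simeq \bm1 \oplus \bm1(-1)$ as claimed.

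The main obstacle I anticipate is pinning down that the left-hand map of the fibre sequence really does split off as ``diagonal on $\bm1$, zero on $\bm1(-1)[-1]$'' compatibly with the identifications of Proposition~\ref{prop:1dim_computations}; this requires tracing through which summand of $f_{\sigma,*}\Psi_f\bm1 \simeq \bm1\oplus\bm1(-1)[-1]$ maps where, and matching it against the two sections $\id\times1$ and $\Delta$. The vanishing from Proposition~\ref{prop:neg_twist_vanishing} forces the $\bm1(-1)[-1]$ part to die, and the unit of the monad $\bm1 \to \clg{J}$ being respected by $\Psi$ forces the $\bm1$ part to be diagonal, so the splitting is essentially forced; but a careful write-up should verify the signs and the identification of the cofibre, perhaps most cleanly by instead using the description $\clg{K} = \rom{pr}_{1,*}\clg{J}$ of Remark~\ref{rmk:K_is_pushf_J} together with the known computation of $\clg{K}$ as the cofibre of $e_\rom{K}$ and the fact that $\Psi_\id\clg{K} \simeq f_{\sigma,*}\Psi_f\clg{J}$ from Proposition~\ref{prop:nearby_kummer_good_iso}, reducing everything to the explicit cofibre sequence for $\clg{K}$ displayed just before Definition~\ref{defn:J}.
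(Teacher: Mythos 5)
Your proposal is correct and takes essentially the same route as the paper: apply $f_{\sigma,*}\Psi_f$ to the defining cofibre sequence of $\clg{J}$, identify the left term as $\bm1\oplus\bm1(-1)[-1]$ via Prop.~\ref{prop:1dim_computations} and the middle term as $\bm1\oplus\bm1$ via properness of the two sections and $\Psi_\id\one\simeq\one$, then read off the cofibre from the matrix $\begin{pmatrix}1&0\\1&0\end{pmatrix}$. The only slip is writing $\Psi_{\rom{pr}_1}$ where you mean $\Psi_\id$ (the sections of $f$ compose with $f$ to give the identity of $\Affl$, not a projection), but this does not affect the argument.
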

\begin{proof}
We have a commutative diagram of cofibre sequences
\begin{equation*}\begin{tikzcd}[ampersand replacement=\&]
f_{\sigma, *} \Psi_f \bm1
	\ar[d, equals]
	\ar[r]
  \& f_{\sigma, *} \Psi_f (\id \times 0)_* \bm1
		\oplus f_{\sigma, *} \Psi_f \Delta_* \bm1
	\ar[d, equals]
	\ar[r]
	\& f_{\sigma, *} \Psi_f \clg{J}
	\ar[d, dashed]
	\\[1em]
\bm1 \oplus \bm1(-1)[-1]
	\ar[r, "{\begin{pmatrix} 1 & 0 \\ 1 & 0 \end{pmatrix}}" ]
  \& \bm1 \oplus \bm1
	\ar[r]
	\& \bm1 \oplus \bm1(-1) .
\end{tikzcd}\end{equation*}
\end{proof}

\begin{rmk}\label{rmk:first_comp_nearby_Kummer}
By combining
Prop.~\ref{prop:nearby_kummer_good_iso}
with Cor.~\ref{cor:comp_pushf_nearby_J},
we get an isomorphism
$\Psi_\id \clg{K} \simeq \bm1 \oplus \bm1(-1)$.
This is our first computation of the nearby Kummer motive,
out of two.
\end{rmk}

\subsection{Reduction to the nearby Kummer motive}

\begin{prop}[*]\label{prop:reduction_to_nearby_Kummer}
There is a commutative diagram
\begin{equation*}\begin{tikzcd}
\Psi_\id\clg{K}[-1] \ar[d, "N_\clg{K}" '] \ar[r]
& \bm1(-1)[-1] \ar[d, "\lambda"]
\\
\Psi_\id\clg{K}(-1)[-1]
& \ar[l] \bm1(-1)[-1]
\end{tikzcd}\end{equation*}
where the horizontal maps become the obvious ones
under the identification in
Rmk.~\ref{rmk:first_comp_nearby_Kummer}
and $\lambda$ is the parameter in Prop.~\ref{prop:reduction_to_1dim}.
\end{prop}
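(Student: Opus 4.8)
The plan is to transport the computation through the isomorphism \(\beta_f\colon\Psi_{\id}\clg{K}\xrightarrow{\sim}f_{\sigma,*}\Psi_f\clg{J}\) of Prop.~\ref{prop:nearby_kummer_good_iso}, for \(f\colon X=\Spec k[t,x,y]/(xy-t)\to\Affl\) the family \eqref{eq:1dimfam}. The transformation \(\beta_f\) is assembled from base-change and commutation isomorphisms that do not involve \(\Log^\vee\), hence is compatible with the monodromy operator, so that under \(\beta_f\) the map \(N_{\clg{K}}\) corresponds to \(f_{\sigma,*}N_{\Psi_f\clg{J}}\). Writing \(c\colon o\hookrightarrow X_\sigma\) for the unique singular point, it then suffices to identify the scalar of \(f_{\sigma,*}N_{\Psi_f\clg{J}}\) on the summand \(\bm1(-1)\) of \(f_{\sigma,*}\Psi_f\clg{J}\simeq\bm1\oplus\bm1(-1)\) (Cor.~\ref{cor:comp_pushf_nearby_J}, Rmk.~\ref{rmk:first_comp_nearby_Kummer}) with the constant \(\lambda\) of Prop.~\ref{prop:sss_monodromy}, which by Prop.~\ref{prop:reduction_to_1dim} may be evaluated on this same \(f\).

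I would first build a variation map for \(\clg{J}\). By Prop.~\ref{prop:nearby_cycles_pushf_pullb}(2), applied to the two sections of \eqref{eq:1dimfam} (closed immersions, hence proper), and by Prop.~\ref{prop:psione}, one has \(\Psi_f(\id\times1)_*\bm1\simeq x_{1,*}\bm1\) and \(\Psi_f\Delta_*\bm1\simeq x_{2,*}\bm1\), both with vanishing monodromy (Prop.~\ref{prop:neg_twist_vanishing}); moreover \(x_1,x_2\neq o\) (Rmk.~\ref{rmk:notation_for_1dim_fam}). Applying \(c^*\) and \(c^!\) to the defining cofibre sequence \eqref{eq:defJ} thus kills both section terms, yielding \(c^*\Psi_f\clg{J}\simeq c^*\Psi_f\bm1[1]=h(D^\circ)[1]\) and \(c^!\Psi_f\clg{J}\simeq c^!\Psi_f\bm1[1]=h_{\rom{c}}(D^\circ)[1]\) for either branch \(D\) (Prop.~\ref{prop:triangle}, Cor.~\ref{cor:computability}). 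Since \(f\) is smooth away from \(o\) and the section terms are skyscrapers, \(N_{\Psi_f\clg{J}}\) is supported at \(o\), so by the argument of Prop.~\ref{prop:varmap} it factors uniquely as \(\epsilon\circ c_*\var_{\clg{J}}\circ\eta\) through a variation \(\var_{\clg{J}}\colon c^*\Psi_f\clg{J}\to c^!\Psi_f\clg{J}(-1)\). The uniqueness clause, applied along \eqref{eq:defJ} (whose middle term has vanishing variation, having vanishing monodromy), identifies \(\var_{\clg{J}}\) with \(\var_f[1]\); and by Prop.~\ref{prop:sss_monodromy} together with Prop.~\ref{prop:reduction_to_1dim}, \(\var_f\) factors as \(c^*\Psi_f\bm1\xrightarrow{\rom{(a)}}\bm1(-1)[-1]\xrightarrow{\lambda}\bm1(-1)[-1]\xrightarrow{\rom{(b)}(-1)}c^!\Psi_f\bm1(-1)\), with \(\rom{(a)},\rom{(b)}\) the maps of Prop.~\ref{prop:triangle}.

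It remains to assemble: \(N_{\clg{K}}=f_{\sigma,*}(\epsilon)\circ\var_{\clg{J}}\circ f_{\sigma,*}(\eta)\) after transporting along \(\beta_f\). Under the identifications above, the vanishing Prop.~\ref{prop:neg_twist_vanishing} forces every map between the Tate objects in sight to be a scalar and leaves only the component carried by the summand \(\bm1(-1)\), which is the \([1]\)-shift of the \(\bm1(-1)[-1]\)-summand of \(h(\Gm)\). On that summand \(\rom{(a)}[1]\) is the identity — it is the localisation projection \(h(D^\circ)\to h(C)(-1)[-1]\) — and \(\rom{(b)}(-1)[1]\) is the matching co-inclusion; and \(f_{\sigma,*}(\eta)\), \(f_{\sigma,*}(\epsilon)\) restrict to the identity there, being built from the counit isomorphism \(c_0^*c_{0,*}\simeq\id\) for \(c_0\colon C\hookrightarrow D\), from purity for \(C\subset D\), and from the equivalence \(h(D^\circ)\xrightarrow{\sim}c_0^*v_*\bm1_{D^\circ}\) produced — as in the proof of Cor.~\ref{cor:computability} — by the localisation triangle and homotopy invariance over \(D\simeq\Affl\). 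Composing, the scalar of \(N_{\clg{K}}\) on \(\bm1(-1)\) equals \(\lambda\), which is the asserted commutativity (up to the cosmetic shift \([-1]\)).

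The main obstacle is the last step: checking that each of the counit, purity, and homotopy-invariance isomorphisms acts by the identity, not merely by some nonzero scalar, on the orientation class of \(\Gm\) that transports the information — a sign and orientation bookkeeping which I expect to be routine but delicate. Should this be postponed, a weaker but still sufficient statement for \S\ref{sec:kummer} is obtained at once: a symmetry argument exchanging the two branches and the two sections of \eqref{eq:1dimfam} forces these auxiliary scalars into \(\{\pm1\}\), whence \(N_{\clg{K}}=\pm\lambda\) on \(\bm1(-1)\), which already determines \(\lambda\) once the monodromy of the nearby Kummer motive is known.
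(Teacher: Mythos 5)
Your proposal follows essentially the same route as the paper: transport along $\beta_f$ to the family $f\colon\Spec k[t,x,y]/(xy-t)\to\Affl$, use the cofibre sequence $\Psi_f\bm1\to x_{1,*}\bm1\oplus x_{2,*}\bm1\to\Psi_f\clg{J}$, observe that the monodromy dies on the skyscraper summand, factor through $c_*c^!\Psi_f\bm1(-1)$, and then push forward by $f_{\sigma,*}$ to read off the scalar $\lambda$ from Prop.~\ref{prop:sss_monodromy}. Your intermediate device of a variation map $\var_{\clg{J}}$ for $\clg{J}$ is a clean reorganisation of the paper's dashed map $N'$ and the map $\rom{(c)}$, but it is the same data, and the identification $\var_{\clg{J}}\simeq\var_f[1]$ plays the role of the paper's big commutative diagram built from Prop.~\ref{prop:sss_monodromy}. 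You correctly identify the hard part as the sign and orientation bookkeeping for the counit, purity, and homotopy-invariance isomorphisms; this is exactly the content of the paper's Lemma~\ref{lemma:compatibility}, which the paper itself only partially proves in this preliminary version.

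One concrete error at the end, though: the fallback claim that establishing $N_{\clg{K}}=\pm\lambda$ on the $\bm1(-1)$ summand ``already determines $\lambda$ once the monodromy of the nearby Kummer motive is known'' is false, and the weaker statement is \emph{not} sufficient for \S~\ref{sec:kummer}. From $N_{\clg{K}}=\pm\lambda$ and the later computation $N_{\clg{K}}=-1$ (Prop.~\ref{prop:kummer_monodromy}) you obtain only $\lambda\in\{\pm1\}$, and the sign of $\lambda$ feeds directly into Thm.~\ref{thm:abstractPL} as the coefficient $r/\lambda$ multiplying $\var_f$; a sign ambiguity there is precisely the difference between $-r\cdot\var$ and $+r\cdot\var$ in the Picard--Lefschetz formula. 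So the symmetry argument exchanging branches is useful as a sanity check, but it cannot replace the orientation-tracking step; that step must be carried out (as the paper intends to in Lemma~\ref{lemma:compatibility}).
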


\begin{rmk}
Prop.~\ref{prop:reduction_to_nearby_Kummer}
thus reduces the computation of $\lambda$
to a computation of the monodromy $N_\clg{K}$ of the nearby Kummer motive.
\end{rmk}

\begin{proof}[Proof of Proposition~\ref{prop:reduction_to_nearby_Kummer}]
Consider the fibre sequence
\begin{equation*}
\Psi_f\bm1 \to x_{1,*}\bm1 \oplus x_{2,*}\bm1 \to \Psi_f\clg{J}
,
\end{equation*}
obtained by applying $\Psi_f$ to~\eqref{eq:defJ}.\footnote{%
	Compare also with Rmk.~\ref{rmk:alt_J}.%
}
Here, the points $x_i$ are introduced in
Rmk.~\ref{rmk:notation_for_1dim_fam}.
The monodromy operator gives a commutative diagram
\begin{equation*}\begin{tikzcd}
\Psi_f\clg{J}[-1] \ar[d, "{N_{\clg{J}}[-1]}" '] \ar[r]
	& \Psi_f\bm1 \ar[dl, dashed, "N'"] \ar[d, "N_{\bm1}"] \ar[r]
	& x_{1,*}\bm1 \oplus x_{2,*}\bm1 \ar[d, "0"]
	\\
\Psi_f\clg{J}(-1)[-1] \ar[r]
	& \Psi_f\bm1(-1) \ar[r]
	& (x_{1,*}\bm1 \oplus x_{2,*}\bm1)(-1)
\end{tikzcd}\end{equation*}
where the dashed arrow $N'$ exists as the monodromy operator vanishes at smooth points
(such as $x_1$ and $x_2$).
We can view $N'$ in a slightly different way:
we have seen that $N$ factors through
$c_*c^!\Psi_f\bm1(-1) \to \Psi_f\bm1(-1)$,
and there are no maps from $c_*(\blank)$ to $x_{i,*}(\blank)$,
regardless of what $(\blank)$ is;
denote the map
$c_*c^!\Psi_f\bm1(-1) \to \Psi_f\clg{J}(-1)[-1]$
by $\rom{(c)}$.
We consider the commutative diagram:
\begin{equation*}\begin{tikzcd}
\Psi_f\clg{J}[-1]
  \ar[ddd,
		bend right=24,
		start anchor = south west,
		end anchor = north west,
		"{N_{\clg{J}}[-1]}"'
		]
  \ar[d] \\
\Psi_f\bm1 \ar[d, "N_{\bm1}"] \ar[r]
  & c_*c^*\Psi_f\bm1 \ar[r]
  & c_*\bm1(-1)[-1] \ar[d, "\lambda"] \\
\Psi_f\bm1(-1)
  & \ar[ld, bend left=16, end anchor = east, "\rom{(c)}"]
		\ar[l]
		c_*c^!\Psi_f\bm1(-1)
  & \ar[l] c_*\bm1(-1)[-1] \\
\Psi_f\clg{J}(-1)[-1] \ar[u]
\end{tikzcd}\end{equation*}
from Prop.~\ref{prop:sss_monodromy}.
We apply $f_{\sigma,*}$ to it and use
Prop.~\ref{prop:nearby_kummer_good_iso},
Prop.~\ref{prop:1dim_computations},
and Cor.~\ref{cor:comp_pushf_nearby_J}:
\begin{equation*}\begin{tikzcd}
\bm1[-1]\oplus\bm1(-1)[-1]
  \ar[
		ddd,
		bend right,
		start anchor = south west,
		end anchor = north west,
		"{N_\clg{K}[-1]}"'
	]
  \ar[d] \\
\bm1\oplus\bm1(-1)[-1] \ar[d, "0"] \ar[r]
	& c^*\Psi_f\bm1 \ar[r, "\rom{(a)}"]
	&[3em] \bm1(-1)[-1] \ar[d, "\lambda"] \\
\bm1(-1)\oplus\bm1(-2)[-1]
	& \ar[ld, bend left=16, end anchor = east, "f_{\sigma,*}\rom{(c)}"]
		\ar[l]
		c^!\Psi_f\bm1(-1)
  & \ar[l, "\rom{(b)}(-1)" '] \bm1(-1)[-1]
	\\
\bm1(-1)[-1]\oplus\bm1(-2)[-1]
	\ar[u]
\end{tikzcd}\end{equation*}
This finishes the proof,
modulo the next lemma.
\end{proof}

\begin{lem}[*]\label{lemma:compatibility}
The following diagrams commute:
\begin{equation*}\begin{tikzcd}
\bm1\oplus\bm1(-1)[-1]
		\ar[d, "{(0,\id)}" '] \ar[r, "\sim"]
	& f_{\sigma,*}\Psi_f\bm1 \ar[r]
	& f_{\sigma,*}c_*c^*\Psi_f\bm1 \ar[d, "\sim"]
	\\
\bm1(-1)[-1]
&& c^*\Psi_f\bm1 \ar[ll, "\rom{(a)}"]
\end{tikzcd}\end{equation*}
\begin{equation*}\begin{tikzcd}
\bm1(-1)[-1]
		\ar[d, "{(\id,0)}" ']
		\ar[r, "\rom{(b)}(-1)"]
	& c^!\Psi_f\bm1(-1) \ar[r, "\sim"]
	& f_{\sigma,*}c_*c^!\Psi_f\bm1(-1)
		\ar[d, "f_{\sigma,*}\rom{(c)}"]
	\\
\bm1(-1)[-1]\oplus\bm1(-2)[-1]
	&& f_{\sigma,*}\Psi_f\clg{J}(-1)[-1] \ar[ll, "\sim" ']
\end{tikzcd}\quad\end{equation*}
\end{lem}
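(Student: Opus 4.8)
The plan is to treat the two squares separately, in each case unwinding the isomorphisms produced in Prop.~\ref{prop:1dim_computations} and Cor.~\ref{cor:comp_pushf_nearby_J}, the maps $\rom{(a)}$ and $\rom{(b)}$ of Prop.~\ref{prop:triangle}, and the map $\rom{(c)}$ from the proof of Prop.~\ref{prop:reduction_to_nearby_Kummer}, and then matching everything against the residue/Gysin triangle of $\Gm \subset \Aff^1 \supset \{0\}$ recalled in Rmk.~\ref{rmk:motiveGm}.

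For the first square I would begin by recording that the isomorphism $f_{\sigma,*}\Psi_f\bm1 \simeq \bm1 \oplus \bm1(-1)[-1]$ of Prop.~\ref{prop:1dim_computations} is, by construction, the first projection of the fibre sequence~\eqref{eq:pushdown_nearby_as_fibre} followed by the standard splitting $h(D_1^\circ) = h(\Gm) \simeq \bm1 \oplus \bm1(-1)[-1]$ of Rmk.~\ref{rmk:motiveGm}; in particular the map $f_{\sigma,*}\Psi_f\bm1 \to f_{\sigma,*}u_{1,*}v_{1,*}\bm1$ is restriction to the branch $D_1$. By functoriality of the unit along $C \hookrightarrow D_1 \hookrightarrow X_\sigma$, the map $f_{\sigma,*}\Psi_f\bm1 \to f_{\sigma,*}c_*c^*\Psi_f\bm1$ is this restriction to $D_1$ followed by restriction to $C$; and since $C$ is a reduced $k$-point, so that $f_\sigma \circ c = \id$, the canonical identification $f_{\sigma,*}c_*c^*\Psi_f\bm1 = c^*\Psi_f\bm1$ together with the description of $\rom{(a)}$ as $c_0^*$ of the residue of $C \hookrightarrow D_1 \hookleftarrow D_1^\circ$ (Prop.~\ref{prop:triangle}) identify the composite along the top and right of the square with $f_{\sigma,*}u_{1,*}$ applied to the residue $v_{1,*}\bm1 \to c_{0,*}\bm1(-1)[-1]$, i.e.\ with the second map $h(\Gm) \to \bm1(-1)[-1]$ of the fibre sequence of Rmk.~\ref{rmk:motiveGm}. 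Under the splitting of that remark this map is exactly the projection $(0,\id)$, which is the first claim. The point worth stressing is that the residue entering $\rom{(a)}$ is \emph{literally} the one used to define the splitting of $h(\Gm)$, so no undetermined scalar is introduced.

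The second square is the Verdier-dual assertion, so one option is to deduce it from the first using Ayoub's compatibility of $\Psi_f$ with duality (which exchanges $c^!$ with $\mathbb{D}c^*\mathbb{D}$, $\rom{(b)}$ with the dual of $\rom{(a)}$, $f_{\sigma,*}$ with $\mathbb{D}f_{\sigma,!}\mathbb{D}$, and the decomposition of Cor.~\ref{cor:comp_pushf_nearby_J} with the one of Prop.~\ref{prop:1dim_computations}), if that shortcut is admissible in this framework. A more hands-on route: split the second fibre sequence of Prop.~\ref{prop:triangle} as $c^!\Psi_f\bm1 \simeq \bm1[-1] \oplus \bm1(-1)[-2]$, with $\rom{(b)}$ the inclusion of the first summand; then the composite around the square is a map $\bm1(-1)[-1] \to \bm1(-1)[-1] \oplus \bm1(-2)[-1] \simeq f_{\sigma,*}\Psi_f\clg{J}(-1)[-1]$ (Cor.~\ref{cor:comp_pushf_nearby_J}) whose component into $\bm1(-2)[-1]$ vanishes by Prop.~\ref{prop:neg_twist_vanishing}, and it remains to identify the component into $\bm1(-1)[-1]$ with the identity. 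For this one cannot use the defining property of $\rom{(c)}$ alone --- that it is a lift of the counit $c_*c^!\Psi_f\bm1(-1) \to \Psi_f\bm1(-1)$ along the connecting map $\Psi_f\clg{J}(-1)[-1] \to \Psi_f\bm1(-1)$ --- since after applying $f_{\sigma,*}$ that counit becomes zero and the lifting condition is vacuous; instead one opens up $\rom{(c)}$ using Rmk.~\ref{rmk:alt_J}, which presents $\Psi_f\clg{J}[-1]$ as $\Psi_f j_{\rom{h},!}\bm1 \oplus \Psi_f j_{\rom{d},!}\bm1 = \operatorname{fib}(\Psi_f\bm1 \to x_{1,*}\bm1) \oplus \operatorname{fib}(\Psi_f\bm1 \to x_{2,*}\bm1)$, computes $\rom{(c)}$ in terms of the canonical lifts of the counit through these two fibres, and pushes the result forward. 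This should identify $f_{\sigma,*}\rom{(c)} \circ \rom{(b)}(-1)$ with the map $\beta$ of~\eqref{eq:motivetriangleb} for $C \hookrightarrow D_1 \hookleftarrow D_1^\circ$, the $!$-analogue of the residue of the first square, which under Cor.~\ref{cor:comp_pushf_nearby_J} is precisely the inclusion of the $\bm1(-1)[-1]$ summand.

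I expect the first square to be essentially bookkeeping once the construction of the splitting in Rmk.~\ref{rmk:motiveGm} is spelled out. The genuine difficulty is the second square, and specifically the identification of the implicitly-defined map $\rom{(c)}$: since $\rom{(c)}$ is pinned down only through its lifting property against the $\clg{J}$-sequence, and since that property is lost under $f_{\sigma,*}$, determining the effect of $f_{\sigma,*}\rom{(c)}$ on the $\bm1(-1)$ summand of $f_{\sigma,*}\Psi_f\clg{J}$ forces one to carry out the construction before pushing forward, which is where the normalisation --- getting the scalar equal to $1$ rather than merely nonzero --- must be nailed down; the duality shortcut, if available, trades this difficulty for the routine but tedious verification that $\mathbb{D}$ interchanges all the relevant structures correctly.
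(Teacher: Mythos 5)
For the first diagram, your argument matches the paper's approach. Both proofs come down to the same observation: after unwinding the isomorphisms of Prop.~\ref{prop:1dim_computations}, the left vertical map and the composite $\rom{(a)} \circ (\text{unit})$ both arise from the connecting morphism of the one localization sequence $C \hookrightarrow D_1 \hookleftarrow D_1^\circ$ (equivalently, $0 \subset \Aff^1 \supset \Gm$), so no sign or scalar ambiguity can enter. The paper packages this by applying a single natural transformation $p_* \to c_0^*$ to that connecting morphism, whereas you trace the composite around the top and right and then read it off against the splitting of $h(\Gm)$; these are the same argument organized differently. Your remark that ``the residue entering $\rom{(a)}$ is literally the one used to define the splitting of $h(\Gm)$'' is exactly the crux, and agrees with the paper's ``but this is the same localization sequence again.''

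For the second diagram, the paper currently offers no proof --- it is explicitly deferred to a later version --- so there is no argument to compare against, and your contribution here is genuinely new material relative to the present text. Your key observation is correct and important: the defining lifting property of $\rom{(c)}$ is against the counit $c_*c^!\Psi_f\bm1(-1) \to \Psi_f\bm1(-1)$, and once one applies $f_{\sigma,*}$ this counit becomes zero (by Prop.~\ref{prop:neg_twist_vanishing} applied componentwise), so the lifting property alone characterizes nothing about $f_{\sigma,*}\rom{(c)}$. Thus one must construct $\rom{(c)}$ \emph{before} pushing forward, which you propose to do via Rmk.~\ref{rmk:alt_J}. This is a sound plan, but as you yourself flag, it remains a sketch: pinning the surviving scalar in the $\bm1(-1)[-1]$ component to be exactly $1$ --- as opposed to some other nonzero rational --- is precisely the content of the lemma, and your proposal indicates where the work is but does not carry it out. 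The Verdier-duality alternative you mention would trade this for verifying that the splittings of Prop.~\ref{prop:1dim_computations} and Cor.~\ref{cor:comp_pushf_nearby_J} and the maps $\rom{(a)}$, $\rom{(b)}$, $\rom{(c)}$ are all exchanged correctly under $\mathbb{D}$, which is not obviously lighter; the direct route is likely the cleaner one.

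One small caution on the direct route: Rmk.~\ref{rmk:alt_J} as stated asserts $\clg{J} \simeq (j_{\rom{h},!}\bm1 \oplus j_{\rom{d},!}\bm1)[1]$, but a priori the octahedron for the factorization of $\bm1 \to (\id\times 1)_*\bm1 \oplus \Delta_*\bm1$ through the diagonal only yields a fibre sequence $\bm1[-1] \to \clg{J}[-1] \to j_{\rom{h},!}\bm1 \oplus j_{\rom{d},!}\bm1$, so the identification you want to lean on requires the boundary map in that sequence to vanish (or a different argument). If you build your proof of the second square on Rmk.~\ref{rmk:alt_J}, you should first make sure it is available in the precise form you need.
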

\begin{proof}
The first diagram can be redrawn as:
\begin{equation*}\begin{tikzcd}
f_{\sigma,*}\Psi_f\bm1 \ar[d] \ar[r]
  & c^*\Psi_f\bm1 \ar[d, "\rom{(a)}"] \\
\bm1(-1)[-1] \ar[r, equals] & \bm1(-1)[-1]
\end{tikzcd}\end{equation*}
We recall how the vertical maps are defined.
The leftmost map is defined starts out with
Prop.~\ref{prop:1dim_computations},
i.e.,
\begin{equation*}
f_{\sigma,*}\Psi_f\bm1 \xrightarrow{\sim} u_{1,*}v_{1,*}\bm1 \simeq q_*\bm1
\end{equation*}
(recall that $q$ denotes the structure morphism $\Gm \to \pt$).
Then, as in Rmk.~\ref{rmk:motiveGm},
we get the left vertical map
as the connecting morphism of the localization sequence
\begin{equation*}
0_!0^!\bm1 \to \bm1_{\Affl} \to j_*j^*\bm1
,
\end{equation*}
of $0$ and $\Gm$ in $\Affl$, after applying $p_*$.
On the other hand,
$\rom{(a)}$ is defined
(Prop.~\ref{prop:triangle})
as the connecting morphism of the localization sequence
\begin{equation*}
c_{0,!}c_0^!\bm1 \to \bm1_{D_1} \to v_*v^*\bm1
,
\end{equation*}
after applying $c_0^*$.
But this is the same localization sequence again
(recall that $D_1 \simeq \Affl$).
Applying the morphism
$p_* = f_{\sigma,*}u_{1,*} \to c_0^*$
to this connecting morphism yields the commutative diagram we are after.

The second diagram will be treated in a later version.
\end{proof}


\section{The nearby Kummer motive}
\label{sec:kummer}

In this section,
we compute the \emph{nearby Kummer motive} $\Psi_\id\clg{K}$,
as well as its monodromy.

Recall
the definition of the Kummer motive (Def.~\ref{defn:Kummer})
as an extension
\begin{equation}\label{eq:extension_Kummer}
\bm1 \xrightarrow{c_\rom{K}} \clg{K} \to \bm1(-1).
\end{equation}
Recall also Construction~\ref{const:unip_nearby_cycles}.
As we are interested in $\Psi_\id\clg{K}$,
we tensor the sequence~\eqref{eq:extension_Kummer}
with $\Log^\vee$,
which yields
\begin{equation}\begin{tikzcd}\label{eq:log_kummer_extension}
\Log^\vee \ar[r]
& \Log^\vee\otimes\,\clg{K} \ar[r]
& \Log^\vee(-1)
.
\end{tikzcd}\end{equation}

Denote the unit and multiplication maps of the algebra object $\Log^\vee$ by $u$ and $m$, respectively.
Let moreover $e \colon \clg{K} \to \Log^\vee$ be the natural map.
Note that the composition
$e \circ c_\rom{K} \colon \bm1_{\Gm} \to \clg{K} \to \Log^\vee$
is the unit map $u$.

\begin{lem}[{\cite[Lemme~3.6.16]{ayoub_thesis_2}}]
The first map in~\eqref{eq:log_kummer_extension} admits a retraction
given by $r := m \circ \id \otimes e$.
This induces a splitting
\begin{equation*}
\Log^\vee\otimes\,\clg{K} \simeq \Log^\vee \oplus \Log^\vee(-1).
\end{equation*}
\qed
\end{lem}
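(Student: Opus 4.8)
The plan is to check the identity $r\circ i\simeq\id_{\Log^\vee}$ by hand, where $i:=\id_{\Log^\vee}\otimes c_\rom{K}$ denotes the first map of~\eqref{eq:log_kummer_extension}, and then to invoke the standard fact that in a stable $\infty$-category a cofibre sequence whose first map is a split monomorphism splits.

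For the retraction, unwind the definition $r=m\circ(\id_{\Log^\vee}\otimes e)$ and compute
\[
r\circ i
= m\circ(\id_{\Log^\vee}\otimes e)\circ(\id_{\Log^\vee}\otimes c_\rom{K})
= m\circ\bigl(\id_{\Log^\vee}\otimes(e\circ c_\rom{K})\bigr)
= m\circ(\id_{\Log^\vee}\otimes u),
\]
using bifunctoriality of $\otimes$ in the middle step and the identity $e\circ c_\rom{K}\simeq u$ recorded just above the lemma in the last. The right-hand side equals $\id_{\Log^\vee}$ by the unit axiom for the algebra object $\Log^\vee$, so $r$ is indeed a retraction of $i$. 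Using associativity of $m$ one checks moreover that $r$ is a map of left $\Log^\vee$-modules; the resulting splitting is then one of $\Log^\vee$-modules, which is the form actually used later, although it is not needed for the present statement.

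For the splitting, recall that $\DA(\Gm)$ is stable and idempotent complete, and that~\eqref{eq:log_kummer_extension} is a cofibre sequence $\Log^\vee\xrightarrow{i}\Log^\vee\otimes\clg{K}\to\Log^\vee(-1)$. Given the retraction $r$, the endomorphism $i\circ r$ of $\Log^\vee\otimes\clg{K}$ is idempotent and splits through $\Log^\vee$ via $i$ and $r$; its complement $\id_{\Log^\vee\otimes\clg{K}}-i\circ r$ is idempotent, annihilates $i$, and is therefore split off by a summand equivalent to the cofibre of $i$, namely $\Log^\vee(-1)$. Summing the two summands yields $\Log^\vee\otimes\clg{K}\simeq\Log^\vee\oplus\Log^\vee(-1)$; alternatively one cites~\cite[Lemme~3.6.16]{ayoub_thesis_2} directly. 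I do not expect a genuine obstacle: the only non-formal ingredient is the identification $e\circ c_\rom{K}\simeq u$, which belongs to the setup, and the rest is the routine behaviour of split cofibre sequences in a stable $\infty$-category. The one point deserving care is the bookkeeping of the tensor factors — that $i$ tensors $c_\rom{K}$ against $\Log^\vee=\Log^\vee\otimes\bm1$ on the correct side, and that $r$ first contracts the $\clg{K}$-factor through $e$ and then multiplies — so that the unit axiom is applied on the side where it holds.
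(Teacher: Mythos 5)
Your computation is correct: unwinding $r\circ i=m\circ(\id\otimes e)\circ(\id\otimes c_{\mathrm{K}})=m\circ(\id\otimes u)=\id$ via the unit axiom is exactly the verification needed, and the passage from a split cofibre sequence to a direct sum decomposition is standard in a stable $\infty$-category. Note, though, that the paper does not actually prove this lemma — it is stated with a \verb|\qed| and attributed to Ayoub's thesis (Lemme~3.6.16), so your argument is supplying the content of that cited result rather than matching an in-text proof. The only small caveat worth flagging is the claim that the splitting is one of $\Log^\vee$-modules: while $r$ is indeed a left $\Log^\vee$-module map by associativity of $m$, a chosen section $s$ need not be a priori, and in a general stable $\infty$-category upgrading a retract diagram of modules to a module-theoretic direct sum decomposition requires a bit more care (it is true here, but not quite as automatic as your phrasing suggests); since, as you say, it is not needed for the statement, this does not affect the proof.
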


\begin{cor}\label{cor:nearby_kummer_split}
The nearby Kummer motive is computed as
$\Psi_{\id}\clg{K} \simeq \bm1 \oplus \bm1(-1)$.
\end{cor}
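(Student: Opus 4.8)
The plan is to deduce Corollary~\ref{cor:nearby_kummer_split} directly from the preceding lemma by applying the nearby cycles functor $\Psi_{\id}$ to the splitting $\Log^\vee\otimes\,\clg{K} \simeq \Log^\vee \oplus \Log^\vee(-1)$. By Construction~\ref{const:unip_nearby_cycles}, for the identity map we have $\Psi_{\id} M = i^* j_*(M \otimes \Log^\vee)$ where $i$ and $j$ are the inclusions of $\pt$ and $\Gm$ into $\Affl$; in particular $\Psi_{\id}\clg{K} = \chi_{\id}(\clg{K}\otimes\Log^\vee)$, so it suffices to identify $\chi_{\id}(\Log^\vee)$. First I would recall that by definition $\Psi_{\id}\one = \chi_{\id}(\Log^\vee)$, and by Prop.~\ref{prop:psione} this equals $\one$. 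Since $\chi_{\id}$ commutes with twists (it is built from $i^*$, $j_*$ and tensoring, all of which commute with the Tate twist, cf.~\cite[Prop.~3.1.7]{ayoub_thesis_2}), we also get $\chi_{\id}(\Log^\vee(-1)) \simeq \one(-1)$.

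Next I would apply $\chi_{\id}$ to the direct sum decomposition from the lemma. The functor $\chi_{\id} = i^* j_*$ is exact, and in particular additive, so it carries the splitting $\Log^\vee\otimes\,\clg{K} \simeq \Log^\vee \oplus \Log^\vee(-1)$ to a splitting
\begin{equation*}
\Psi_{\id}\clg{K} = \chi_{\id}(\Log^\vee\otimes\,\clg{K}) \simeq \chi_{\id}(\Log^\vee) \oplus \chi_{\id}(\Log^\vee(-1)) \simeq \one \oplus \one(-1).
\end{equation*}
This is exactly the assertion. The only mild subtlety is matching this with the definition: one must observe that $\Psi_{\id}(\Log^\vee\otimes\,\clg{K})$ literally unwinds to $\chi_{\id}(\Log^\vee\otimes\,\clg{K}\otimes\Log^\vee)$ rather than $\chi_{\id}(\Log^\vee\otimes\,\clg{K})$, so one should instead feed the lemma's splitting of $\clg{K}\otimes\Log^\vee$ directly into the formula $\Psi_{\id}M = \chi_{\id}(M\otimes\Log^\vee)$ with $M = \clg{K}$, noting that the right-hand side $\clg{K}\otimes\Log^\vee$ is precisely what the lemma splits. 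Either way the computation is immediate once the lemma is in hand.

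I do not expect a genuine obstacle here: the corollary is a formal consequence of the lemma together with Prop.~\ref{prop:psione} and the exactness of $\chi_{\id}$. If anything, the only point requiring a word of care is the compatibility of $\Psi_{\id}$ (equivalently $\chi_{\id}$) with Tate twists, which is needed to evaluate $\chi_{\id}(\Log^\vee(-1))$; this is standard and cited above. It is perhaps worth remarking that this gives the \emph{second} computation of $\Psi_{\id}\clg{K}$ promised in Rmk.~\ref{rmk:first_comp_nearby_Kummer}, now obtained intrinsically from Ayoub's construction rather than via the one-dimensional family of \S~\ref{sec:reduction_to_kummer}; reconciling the two identifications, and tracking the monodromy across them, is what powers the determination of $\lambda$ in the remainder of the section.
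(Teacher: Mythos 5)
Your proposal is correct and follows essentially the same route as the paper: apply $\chi_{\id}$ to the lemma's splitting of $\clg{K}\otimes\Log^\vee$, identify $\chi_{\id}(\Log^\vee)$ with $\Psi_{\id}\one\simeq\one$ via Prop.~\ref{prop:psione}, and invoke compatibility of $\chi_{\id}$ with Tate twists. The paper phrases the same computation slightly differently, rewriting $\Log^\vee\oplus\Log^\vee(-1)$ as $\Log^\vee\otimes(\one\oplus\one(-1))$ so that $\Psi_{\id}\clg{K}\simeq\Psi_{\id}(\one\oplus\one(-1))$, but the content is identical.
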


\begin{proof}
By the lemma,
\begin{equation*}
\Log^\vee \otimes \clg{K}
\simeq \Log^\vee \otimes (\bm1 \oplus \bm1(-1)).
\end{equation*}
Therefore,
$\Psi_\id\clg{K} \simeq \Psi_\id(\bm1\oplus\bm1(-1))$
and we conclude using
Prop.~\ref{prop:psione}.
\end{proof}

\begin{rmk}\label{rmk:nearby_Kummer_comps_agree}
The isomorphism in Cor.~\ref{cor:nearby_kummer_split}
is the same as the one described in Rmk.~\ref{rmk:first_comp_nearby_Kummer}.
To see this,
note that they both fit into a commutative diagram
\begin{equation*}\begin{tikzcd}
\bm1 \ar[d, "\id"] \ar[r]
  & \Psi_\id\clg{K} \ar[d, "\sim" {rotate=90, anchor=north}] \ar[r]
  & \bm1(-1) \ar[d, "\id"] \\
\bm1 \ar[r]
  & \bm1 \oplus \bm1(-1) \ar[r]
  & \bm1(-1),
\end{tikzcd}\end{equation*}
and thus differ by an automorphism
\begin{equation*}
\begin{pmatrix}
\id & a \\
0 & \id
\end{pmatrix} \colon \bm1 \oplus \bm1(-1) \to \bm1 \oplus \bm1(-1)
,
\end{equation*}
but $a = 0$ by Prop.~\ref{prop:neg_twist_vanishing},
and we are done.
\end{rmk}

Next, we compute the monodromy operator,
relative to this description of the nearby Kummer motive.
Recall (Construction~\ref{const:monodromy})
that
there is a map $N \colon \Log^\vee \to \Log^\vee(-1)$
which induces the monodromy operator
$N \colon \Psi_{\id} \to \Psi_{\id}(-1)$
via the functoriality of $\chi_{\id}$.
The map $N$ is induced by
$b \colon \clg{K} \to \bm1(-1)$,
and this has the following lemma as a consequence.
\begin{lem}[{\cite[85]{ayoub_etale}}]\label{lem:N_from_b}
We have that $N \circ e = u(-1) \circ b$.
\qed
\end{lem}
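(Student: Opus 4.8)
The identity is essentially formal, and the plan is to extract it directly from Ayoub's construction of the monodromy on $\Log^\vee$ (Construction~\ref{const:monodromy}, \cite[\S~11]{ayoub_etale}). Recall that $\Log^\vee = \operatorname{Free}_{/\bm1}(\clg{K})$ is a commutative $\bm1$-algebra equipped with its natural map $e\colon\clg{K}\to\Log^\vee$, that $u = e\circ c_\rom{K}$ as recorded just before the lemma, and that $N\colon\Log^\vee\to\Log^\vee(-1)$ is the map induced by $b\colon\clg{K}\to\bm1(-1)$. The first step is to make ``induced'' precise: $N$ is the $\Log^\vee(-1)$-valued derivation of $\Log^\vee$ corresponding to the $\bm1$-linear map $u(-1)\circ b\colon\clg{K}\to\Log^\vee(-1)$ under the standard equivalence $\operatorname{Der}_{\bm1}(\operatorname{Free}_{/\bm1}(\clg{K}),M)\simeq \Hom_{\bm1}(\clg{K},M)$ (valid for any $\Log^\vee$-module $M$, here $M=\Log^\vee(-1)$; it rests on $\Omega_{\Log^\vee/\bm1}\simeq\Log^\vee\otimes\clg{K}$), and this equivalence is implemented by restriction along $e$.

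Granting this, the lemma is immediate: the derivation $N$ attached to $u(-1)\circ b$ satisfies $N\circ e = u(-1)\circ b$ by the very description of the bijection. As a consistency check one recovers $N\circ u = 0$: indeed $N\circ u = (u(-1)\circ b)\circ c_\rom{K} = 0$, since $b\circ c_\rom{K}=0$ because $\bm1\xrightarrow{c_\rom{K}}\clg{K}\xrightarrow{b}\bm1(-1)$ is a (co)fibre sequence. This is the expected vanishing of monodromy on the unit object.

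If one would rather not invoke the derivation characterization, the ``hands-on'' version of the argument runs in two steps. First, using the vanishing $N\circ u = 0$ of monodromy at a smooth point (cf.\ the discussion after Prop.~\ref{prop:varmap}) together with $u = e\circ c_\rom{K}$, one gets $N\circ e\circ c_\rom{K}=0$, so $N\circ e$ factors through the cofibre $b$ as $N\circ e=\psi\circ b$ for a unique $\psi\colon\bm1(-1)\to\Log^\vee(-1)$. Second, one identifies $\psi=u(-1)$ using the explicit model of $\Log^\vee$: its length filtration has associated graded $\bigoplus_{n\ge 0}\bm1(-n)$, with the unit spanning degree $0$ and $e(\clg{K})$ spanning degrees $0$ and $1$, and $N$ is the degree-lowering operator which is the identity on graded pieces; hence $N$ carries the degree-$1$ piece of $e(\clg{K})$ onto the degree-$0$ piece of $\Log^\vee(-1)$, i.e.\ onto the image of $u(-1)$, which says exactly $\psi=u(-1)$.

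The diagram-chases with the cofibre sequences are routine. The one point that genuinely requires care is the normalization in the last step --- that Ayoub's $N$ hits $u(-1)\circ b$ on the nose, with no stray scalar introduced by the symmetric- versus divided-power structure on $\operatorname{Free}_{/\bm1}(\clg{K})$, or by grading conventions. I expect this to be the main obstacle; it is settled by transcribing Ayoub's construction faithfully (this is precisely the content of \cite[85]{ayoub_etale}), after which the equality $N\circ e = u(-1)\circ b$ holds by definition.
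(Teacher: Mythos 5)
The paper proves nothing here: the lemma is stated with a citation to Ayoub and closed by $\qed$, so the paper's ``proof'' is simply deference to \cite[85]{ayoub_etale}. Your proposal is a reasonable sketch of what unwinding that citation entails, and you correctly flag the normalization as the real content. Two cautions are in order. First, the universal-property-of-derivations reading explains the \emph{shape} of the formula, but it should not be presented as the \emph{definition} of $N$: Ayoub constructs $N$ as a limit of concrete degree-lowering maps on the filtered pieces $\operatorname{Sym}^n\clg{K}$ of $\Log^\vee$, and the lemma is precisely the assertion that this agrees with the derivation extending $u(-1)\circ b$; invoking the derivation characterisation as the definition risks circularity, and note moreover that the derivation property of $N$ is itself a \emph{cited} fact in this paper (Lemma~\ref{lem:N_derivation}), so it does not come for free. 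Second, in your hands-on version, both the vanishing $N\circ u = 0$ and the uniqueness of the factorisation $\psi$ through $b$ are most directly justified by Prop.~\ref{prop:neg_twist_vanishing} applied along the filtration of $\Log^\vee(-1)$ (whose graded pieces are $\bm1(-m)$ with $m\ge1$), rather than by the discussion of monodromy at smooth points after Prop.~\ref{prop:varmap}, which is about the nearby cycles complex $\Psi_f\bm1$ and not about $\Log^\vee$ directly. With those adjustments your sketch matches the expected argument, and, like the paper, it ultimately rests on transcribing Ayoub's construction.
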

We record one more fact about $N$,
which is crucial in the sequel.
\begin{lem}[{\cite[Rmk.~3.8]{motivic_monodromy}}]\label{lem:N_derivation}
The map $N$ is a derivation of the algebra $\Log^\vee$,
in the sense that
\begin{equation*}
N \circ m = m(-1) \circ (N \otimes \id + \id \otimes N)
.
\end{equation*}
\qed
\end{lem}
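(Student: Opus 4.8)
\emph{How I would prove this.} The plan is to make $N$ explicit on $\Log^\vee=\operatorname{Free}_{/\bm1}(\clg{K})$, after which the Leibniz rule becomes a formal computation. Rationally, the free commutative $\bm1$-algebra on $\clg{K}$ splits as $\Log^\vee=\bigoplus_{n\ge 0}\Sym^n\clg{K}$, with multiplication the canonical maps $\Sym^p\clg{K}\otimes\Sym^q\clg{K}\to\Sym^{p+q}\clg{K}$. First I would unwind Construction~\ref{const:monodromy} to see that the map ``induced by $b\colon\clg{K}\to\bm1(-1)$'' is nothing but the derivation classified, under the universal property of the free commutative algebra (equivalently, under the identification $L_{\Log^\vee/\bm1}\simeq\Log^\vee\otimes\clg{K}$ of the cotangent complex), by the map $u(-1)\circ b\colon\clg{K}\to\Log^\vee(-1)$ into the tautological $\Log^\vee$-module $\Log^\vee(-1)$. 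Concretely, on the summand $\Sym^n\clg{K}$ this is the $\Sigma_n$-coinvariants of the ``slot sum'' $\sum_{i=1}^{n}\id\otimes\cdots\otimes b\otimes\cdots\otimes\id\colon\clg{K}^{\otimes n}\to\clg{K}^{\otimes(n-1)}(-1)$.

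Given this description the lemma follows at once: under $\Sym^p\clg{K}\otimes\Sym^q\clg{K}\to\Sym^{p+q}\clg{K}$ the slot sum over all $p+q$ tensor factors is the sum of the slot sum over the first $p$ factors and the slot sum over the last $q$, which is exactly $m(-1)\circ(N\otimes\id+\id\otimes N)$ — the same bookkeeping that makes operators of the form ``$\sum x_i\,\partial_{x_i}$'' derivations. Purely abstractly: the universal property gives an equivalence $\operatorname{Der}_{\bm1}(\Log^\vee,\Log^\vee(-1))\xrightarrow{\ \sim\ }\operatorname{Map}_{\bm1}(\clg{K},\Log^\vee(-1))$ by restriction along $e$, the target element $u(-1)\circ b$ corresponds to a canonical derivation, and the remaining task is to identify $N$ with it.

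The one genuinely non-formal step, and the main obstacle, is precisely that last identification of Ayoub's $N$ with the derivation. Lemma~\ref{lem:N_from_b} only records that $N$ and the derivation agree on the generators $e(\clg{K})\subset\Log^\vee$, and agreeing on generators does not force equality of two maps out of $\Log^\vee$ unless one already knows that one of them is a derivation — which is what is in question. I would therefore resolve it by a direct trace through Construction~\ref{const:unip_nearby_cycles}--\ref{const:monodromy}, checking that the ``induced map'' is literally the functorial cotangent-complex / slot-sum recipe above. The care needed there is that $\clg{K}$ is a \emph{non-split} extension of $\bm1(-1)$ by $\bm1$, so the grading $\Log^\vee=\bigoplus_n\Sym^n\clg{K}$ does not refine to a splitting of the weight filtration; with rational coefficients, however, the $\Sigma_n$-coinvariants are well-behaved and Prop.~\ref{prop:neg_twist_vanishing} controls the remaining ambiguities. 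This is, in substance, \cite[Rmk.~3.8]{motivic_monodromy}.
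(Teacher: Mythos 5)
The paper does not prove this lemma itself; it cites \cite[Rmk.~3.8]{motivic_monodromy} and marks the statement with \qed. There is therefore no in-paper proof to compare against, but your proposal does capture the substance of what that reference must contain, and it is structured in the right way.

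Your outline — identify $\Log^\vee = \operatorname{Free}_{/\bm1}(\clg{K}) = \bigoplus_{n\ge0}\Sym^n\clg{K}$, observe that the derivation into the tautological module $\Log^\vee(-1)$ classified (via the universal property, or $L_{\Log^\vee/\bm1}\simeq \Log^\vee\otimes\clg{K}$) by $u(-1)\circ b$ is the slot-sum on each $\Sym^n$, and then note that the Leibniz rule is a formal bookkeeping fact for slot-sums — is the correct and standard route. You are also right about where the real content sits: Lemma~\ref{lem:N_from_b} ($N\circ e = u(-1)\circ b$) gives agreement on generators, and two maps out of a free algebra need not coincide just because they agree on generators unless one already knows both are derivations. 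So what must actually be checked is that Ayoub's $N$, as produced by Construction~\ref{const:monodromy} (the map ``induced by $b\colon\clg{K}\to\bm1(-1)$'' on $\Log^\vee$), is \emph{defined} to be the slot-sum — equivalently, that the graded pieces $\Sym^n\clg{K}\to\Sym^{n-1}\clg{K}(-1)$ in Ayoub's construction are the $\Sigma_n$-coinvariant images of $\sum_i \id^{\otimes(i-1)}\otimes b\otimes\id^{\otimes(n-i)}$. Once that is traced through, the Leibniz rule is automatic and nothing further remains. Two small points worth stating explicitly when you write this up: (i) the slot-sum is genuinely $\Sigma_n$-invariant and hence descends to coinvariants, so the map on $\Sym^n$ is well-defined — this is the bookkeeping your ``$\sum x_i\,\partial_{x_i}$'' analogy gestures at and should be spelled out; (ii) the rational-coefficients hypothesis is what lets you pass freely between $\Sigma_n$-invariants and -coinvariants and split off the $\Sym^n$ pieces of the free algebra, and it is being used tacitly throughout. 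The concern you raise about $\clg{K}$ being a non-split extension is real but, as you say, does not obstruct the argument: the $\Sym$-grading and the Leibniz computation do not require splitting the weight filtration.
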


\begin{prop}\label{prop:kummer_monodromy}
Under the isomorphism in Cor.~\ref{cor:nearby_kummer_split}
(and thus also the one in Rmk~\ref{rmk:first_comp_nearby_Kummer},
by Rmk.~\ref{rmk:nearby_Kummer_comps_agree}),
the monodromy operator $N \colon \Psi_{\id}\clg{K} \to \Psi_{\id}\clg{K}(-1)$
is given by
\begin{equation*}
\begin{pmatrix}
0 & -1 \\
0 & 0
\end{pmatrix} \colon \bm1 \oplus \bm1(-1) \to \bm1(-1) \oplus \bm1(-2).
\end{equation*}
\end{prop}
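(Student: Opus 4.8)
The plan is to compute $N$ on the Kummer motive $\clg{K}$ directly, using the splitting $\Log^\vee\otimes\clg{K}\simeq\Log^\vee\oplus\Log^\vee(-1)$ from the preceding lemma and the two structural facts about $N$ recorded above: that $N\circ e = u(-1)\circ b$ (Lemma~\ref{lem:N_from_b}) and that $N$ is a derivation of $\Log^\vee$ (Lemma~\ref{lem:N_derivation}). The monodromy on $\Psi_\id\clg{K}$ is induced by tensoring with $N\colon\Log^\vee\to\Log^\vee(-1)$, so after applying $\chi_\id$ and using $\Psi_\id\bm1\simeq\bm1$, the matrix we want is read off from the action of $\id_{\clg{K}}\otimes N$ on $\Log^\vee\otimes\clg{K}$, transported across the splitting. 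Concretely, the splitting is given by the pair (retraction $r = m\circ(\id\otimes e)$, inclusion $\Log^\vee\to\Log^\vee\otimes\clg{K}$ coming from $c_\rom{K}$), so the first summand $\bm1$ of $\Psi_\id\clg{K}$ corresponds to $\Log^\vee\otimes c_\rom{K}$ and the second summand $\bm1(-1)$ is the kernel of $r$, identified via the extension~\eqref{eq:log_kummer_extension}.

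First I would make the identification of the second summand explicit: the map $\id\otimes e\colon \Log^\vee\otimes\clg{K}\to\Log^\vee\otimes\Log^\vee$ followed by $m$ gives the retraction $r$, and the complementary summand $\Log^\vee(-1)$ is picked out by the map $\Log^\vee\otimes\clg{K}\to\Log^\vee\otimes\bm1(-1)=\Log^\vee(-1)$ induced by $b\colon\clg{K}\to\bm1(-1)$. So the splitting is implemented by the isomorphism with components $(r,\ (\id\otimes b))$. Now apply $\id\otimes N$ and chase: on the first summand, an element coming from $\Log^\vee\otimes c_\rom{K}$ is sent by $\id\otimes N$ to something whose $r$-component involves $m\circ(\id\otimes(N\circ e\circ c_\rom{K})) = m\circ(\id\otimes(N\circ u))$, which vanishes since $N$ is a derivation and $u$ is the unit (so $N\circ u = 0$); and whose $b$-component involves $b(-1)\circ N$ applied after $c_\rom{K}$ — but $b\circ c_\rom{K} = 0$ in $\DA(\Gm)$ up to the relevant identifications, giving the zero first column. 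On the second summand, the generator is detected by $\id\otimes b$; applying $\id\otimes N$ and using $N\circ e = u(-1)\circ b$ together with the derivation property shows the $r$-component picks up exactly $m(-1)\circ(\id\otimes u(-1))\circ(\id\otimes b) = (\text{twist of })\id$ up to a sign, landing in the first summand, while the $b$-component vanishes by $\operatorname{Hom}$-vanishing (Prop.~\ref{prop:neg_twist_vanishing}, no nonzero map $\bm1(-1)\to\bm1(-2)$). This pins the matrix down to $\left(\begin{smallmatrix}0 & c\\ 0 & 0\end{smallmatrix}\right)$ for some $c$, and the sign bookkeeping in the derivation identity $N\circ m = m(-1)\circ(N\otimes\id+\id\otimes N)$ combined with the normalization of $b$ gives $c = -1$.

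The main obstacle I expect is precisely this last sign: getting $c=-1$ rather than $c=+1$ requires carefully tracking the sign conventions in Ayoub's construction of $N$ (the identity in Lemma~\ref{lem:N_from_b}, the placement of the Tate twist, and the direction of the connecting maps in the localization sequences that define $e_\rom{K}$ and $b$), as well as the sign introduced when one shifts the extension~\eqref{eq:log_kummer_extension} and applies $\chi_\id$. A clean way to organize this is to work at the level of the defining cofibre sequences before passing to $\Psi_\id$: write out the commutative diagram relating $c_\rom{K}$, $e$, $b$, $u$, $m$, apply $\id\otimes N$, and use that $N$ is zero on the unit $\bm1\hookrightarrow\Log^\vee$, so the only surviving contribution is the ``off-diagonal'' one, whose coefficient is forced by Lemma~\ref{lem:N_from_b}. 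Everything else — the vanishing of the diagonal and lower-triangular entries — is formal, coming from $N\circ u = 0$ and Prop.~\ref{prop:neg_twist_vanishing}. Once the diagram is set up correctly the value $-1$ should drop out without further choices.
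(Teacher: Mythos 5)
Your proposal is essentially the paper's proof: you identify the same splitting data (retraction $r = m\circ(\id\otimes e)$ and complementary projection $\id\otimes b$), the same two structural inputs (the derivation property, Lem.~\ref{lem:N_derivation}, and $N\circ e = u(-1)\circ b$, Lem.~\ref{lem:N_from_b}), and the same reasons for the vanishing of the other three entries. The one place you leave the value to ``drop out'' is exactly where the paper works hardest: after precomposing with $\id\otimes b$ and using $s\circ(\id\otimes b)=\id-(\id\otimes c_\rom{K})\circ r$, the composite reduces to $[m(-1)\circ(N\otimes\id) - N\circ m]\circ(\id\otimes e)$, and the derivation identity gives $m(-1)\circ(N\otimes\id)-N\circ m = -m(-1)\circ(\id\otimes N)$, which by Lem.~\ref{lem:N_from_b} becomes $-\id\otimes b$; so the sign $-1$ is forced by the derivation identity alone (not by any ``normalization of $b$'', and Lem.~\ref{lem:N_from_b} enters only to re-express $N\circ e$), and once you write this cancellation out your sketch coincides with the paper's argument.
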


\begin{proof}
Consider the diagram:
\begin{equation*}\begin{tikzcd}
\bm1 \otimes \bm1 \ar[d, "u \otimes \id"] \ar[r, "\id \otimes c_\rom{K}"]
	& \bm1 \otimes \clg{K} \ar[d, "u \otimes \id"] \ar[r, "\id \otimes b"]
	& \bm1 \otimes \bm1(-1) \ar[d, "u \otimes \id"]
	\\
\Log^\vee \otimes \bm1 \ar[d, "N \otimes \id"] \ar[r, "\id \otimes c_\rom{K}"]
	& \Log^\vee \otimes \clg{K} \ar[l, bend left, start anchor = south west, end anchor = south east, "r"]
		\ar[d, "N \otimes \id"] \ar[r, "\id \otimes b"]
	& \Log^\vee \otimes \bm1(-1) \ar[l, bend right, start anchor = north west, end anchor =  north east, "s" ']
		\ar[d, "N \otimes \id"]
	\\
\Log^\vee(-1) \otimes \bm1 \ar[r, "\id \otimes c_\rom{K}"]
  & \Log^\vee(-1) \otimes \clg{K} \ar[l, bend left, start anchor = south west, end anchor = south east, "r(-1)"]
		\ar[r, "\id \otimes b"]
	& \Log^\vee(-1) \otimes \bm1(-1)
\end{tikzcd}\end{equation*}
Here, $s$ is the section induced by the retraction $r$ described above.
It suffices to show that the composition $r(-1) \circ N \otimes \id  \circ s$
equals minus the identity morphism,
$-\id\colon \Log^\vee(-1) \to \Log^\vee(-1)$.
We first get rid of the inexplicit section $s$.
The goal is equivalent to showing
\begin{equation*}
\varphi
:= r(-1) \circ N \otimes \id_{\clg{K}} \circ s \circ \id_{\Log^\vee} \otimes b
= -\id_{\Log^\vee} \otimes b,
\end{equation*}
as we can cancel $\id \otimes b$ on the right with $s$.
But, by definition,
$s \circ \id \otimes b = \id - \id \otimes c_\rom{K} \circ r$,
so we get that
\begin{align*}
\varphi
&= r(-1) \circ N \otimes \id \circ (
	\id
	- \id \otimes c_\rom{K} \circ r
)
\\
&= r(-1) \circ N \otimes \id
	- r(-1) \circ N \otimes \id \circ \id \otimes c_\rom{K} \circ r
.
\end{align*}
The second term can be rewritten as
\begin{equation*}
r(-1) \circ N \otimes \id \circ \id \otimes c_\rom{K} \circ r
	= N \otimes \id \circ r,
\end{equation*}
by plugging in the definition of $r(-1)$,
and using that $e \circ c_\rom{K} = u$,
and $m \circ \id \otimes u = \id$.
We deduce that
\begin{align*}
\varphi
&= r(-1) \circ N \otimes \id - N \otimes \id \circ r
\\
&= m(-1) \circ \id \otimes e \circ N \otimes \id
	- N \circ m \circ \id \otimes e
\\
&= [m(-1) \circ N \otimes \id - N \circ m] \circ \id \otimes e
.
\end{align*}
To conclude,
we invoke Lemma~\ref{lem:N_derivation},
to rewrite
\begin{align*}
m(-1) \circ N \otimes \id &- N \circ m
\\
= {}
& m(-1) \circ N \otimes \id - (m(-1) \circ N \otimes \id + m(-1) \circ \id \otimes N)
\\
= {}
& - m(-1) \circ \id \otimes N
,
\end{align*}
and from there obtain
\begin{equation*}
\varphi = - m(-1) \circ \id \otimes N \circ \id \otimes e
	= - m(-1) \circ \id \otimes u(-1) \circ \id \otimes b
	= - \id \otimes b,
\end{equation*}
by Lemma~\ref{lem:N_from_b}.
\end{proof}

\subsection{Proof of the main theorem}

We are now ready to deduce our main theorem.

\begin{thm}[The general Picard--Lefschetz formula, *]
\label{thm:abstractPL}					
Let $X$ be regular and let
$f \colon X \to \Affl$
be a flat, quasi-projective morphism.
Assume $f$ is smooth except for
an isolated (quasi-)homogeneous singularity
at $o \in X_\sigma$,
defined by
a (quasi-) homogeneous polynomial $F$
of ($\underline{a}$-weighted) degree $r$.
Let
\[
C := V(F(T_0, \dotsc, T_n)) \hookrightarrow D := V(F(T_0, \dotsc, T_n)-T_{n+1}^r),
\]
and
\[
D^\circ := D \setminus C
.
\]
That is, $C$ and $D$ are hypersurfaces in $\P^n$ and $\P^{n+1}$
(in $\P(\underline{a})$ and $\P(\underline{a}, 1)$),
respectively.
\begin{enumerate}
\item We have natural isomorphisms 
\[
o^* \Psi_f \one \simeq h(D^\circ),
\quad
o^! \Psi_f \one \simeq h_{\rom{c}}(D^\circ)
.
\]

\item
The variation $\var_f$ features in the commutative diagram 
\[\begin{tikzcd}
&o^*\Psi_f\one  \arrow[d, "\simeq"]  \arrow[rr, "-r \cdot \var_f"]& &  o^!\Psi_f\one(-1) \\
& h(D^\circ)\arrow[r, "\alpha"]
&[2em] h(C)(-1)[-1] \arrow[r, "\beta(-1)"]
&[3em] h_{\rom{c}}(D^\circ)(-1) \arrow[u, "\simeq"]
\end{tikzcd}\]
where the maps in the bottom row
are the
natural maps coming from
the localization sequences
\eqref{eq:motivetrianglea}, \eqref{eq:motivetriangleb}. 
\end{enumerate}
\end{thm}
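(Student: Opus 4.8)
The plan is to assemble the reductions carried out in \S\S~\ref{sec:ss_reduction}--\ref{sec:kummer}. First I would apply Prop.~\ref{proposition:homogeneousssred} (resp.\ Prop.~\ref{proposition:qhomogeneousssred} in the quasi-homogeneous case) to conclude that $f$ is a family with a good singularity in the sense of Def.~\ref{definition:goodmorphism}, with special semistable reduction $g \colon Y \to \Affl$ given by $r$th power base change followed by (weighted) blowup and normalization, and with $D := g^{-1}(o) \simeq V(F - T_{n+1}^r)$ and $C \simeq V(F)$, so that $D^\circ = D \setminus C$ is exactly the open subscheme $\{T_{n+1} \neq 0\}$ of the statement. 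Part~(1) is then immediate: the equivalences $o^*\Psi_f\one \simeq h(D^\circ)$ and $o^!\Psi_f\one \simeq h_\rom{c}(D^\circ)$ are precisely \eqref{eq:computenearbycycles1} and \eqref{eq:computenearbycycles2} of Prop.~\ref{proposition:compute nearby cycles}.

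For part~(2), I would invoke Prop.~\ref{prop:vargoodmorphism}, which already produces the commutative square of the theorem but with the scalar $r/\lambda$ in place of $-r$, where $\lambda$ is the rational number of Prop.~\ref{prop:sss_monodromy}; its bottom row $h(D^\circ) \xrightarrow{\alpha} h(C)(-1)[-1] \xrightarrow{\beta(-1)} h_\rom{c}(D^\circ)(-1)$ is the composite of the localization maps \eqref{eq:motivetrianglea}--\eqref{eq:motivetriangleb} for $C \hookrightarrow D \hookleftarrow D^\circ$, exactly as required. Hence the entire content of part~(2) reduces to the single identity $\lambda = -1$.

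That identity is obtained via the remaining reductions. By Prop.~\ref{prop:reduction_to_1dim}, $\lambda$ is independent of $f$ and may be computed for the one-dimensional family $\Spec k[t,x,y]/(xy-t) \to \Affl$. By Prop.~\ref{prop:reduction_to_nearby_Kummer}, for that family $\lambda$ is the scalar in the square relating the monodromy $N_\clg{K}$ of the nearby Kummer motive to the natural maps $\Psi_\id\clg{K}[-1] \to \bm1(-1)[-1]$ and $\bm1(-1)[-1] \to \Psi_\id\clg{K}(-1)[-1]$; under the splitting $\Psi_\id\clg{K} \simeq \bm1 \oplus \bm1(-1)$ of Rmk.~\ref{rmk:first_comp_nearby_Kummer} these are the projection onto and inclusion of the respective rank-one summands, so that the composite through $\lambda$ is the endomorphism $\begin{pmatrix} 0 & \lambda \\ 0 & 0 \end{pmatrix}$ of $\bm1(-1)\oplus\bm1(-2)$ (shifted). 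Comparing with Prop.~\ref{prop:kummer_monodromy}, which computes $N_\clg{K} = \begin{pmatrix} 0 & -1 \\ 0 & 0 \end{pmatrix}$ with respect to the same splitting (legitimate since Rmk.~\ref{rmk:nearby_Kummer_comps_agree} identifies it with the splitting of Cor.~\ref{cor:nearby_kummer_split}), we read off $\lambda = -1$. Substituting $r/\lambda = -r$ into the diagram of Prop.~\ref{prop:vargoodmorphism} then yields the statement, and the passage from the homogeneous to the quasi-homogeneous case is purely a matter of replacing Prop.~\ref{proposition:homogeneousssred} by Prop.~\ref{proposition:qhomogeneousssred}, since everything downstream uses only that the singularity is good.

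I expect the genuine difficulty to be bookkeeping rather than new mathematics: one must verify that the identifications $o^*\Psi_f\one \simeq h(D^\circ)$, $o^!\Psi_f\one \simeq h_\rom{c}(D^\circ)$ fixed in Prop.~\ref{proposition:compute nearby cycles} are compatible, through the chain of exchange morphisms of Props.~\ref{prop:varcomparison} and \ref{prop:varsmoothcomparison} and Prop.~\ref{prop:reduction_to_nearby_Kummer}, with the two descriptions of the nearby Kummer motive, so that no spurious sign or unipotent off-diagonal term slips in; but Rmk.~\ref{rmk:nearby_Kummer_comps_agree} together with the vanishing Prop.~\ref{prop:neg_twist_vanishing} are exactly what rule those out. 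The one place where a real choice is pinned down --- the value of $\lambda$ --- has already been reduced in \S\S~\ref{sec:reduction_to_kummer}--\ref{sec:kummer} to the concrete derivation computation of Prop.~\ref{prop:kummer_monodromy}, so at the level of this theorem the work is assembly.
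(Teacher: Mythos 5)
Your proposal is correct and follows the same chain of reductions as the paper's own proof (which is itself just a brief list of citations): Props.~\ref{proposition:homogeneousssred}/\ref{proposition:qhomogeneousssred} and Prop.~\ref{proposition:compute nearby cycles} give part~(1), Prop.~\ref{prop:vargoodmorphism} gives part~(2) up to $r/\lambda$, and Props.~\ref{prop:reduction_to_1dim}, \ref{prop:reduction_to_nearby_Kummer} and \ref{prop:kummer_monodromy} pin down $\lambda = -1$. Your explicit invocation of Rmk.~\ref{rmk:nearby_Kummer_comps_agree} to reconcile the two splittings of $\Psi_\id\clg{K}$ is a useful clarification the paper leaves implicit.
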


\begin{proof}
Prop.~\ref{prop:vargoodmorphism}
gives the diagram up to a rational constant $\lambda$.
By Propositions~\ref{prop:reduction_to_1dim}
and~\ref{prop:reduction_to_nearby_Kummer},
$\lambda = -1$
is computed in
Prop.~\ref{prop:kummer_monodromy},
finishing the proof.
\end{proof}	

\section{The formula for a quadratic singularity}
\label{sec:quadratic}

In this section we assume that our base field $k$ is algebraically closed (and of characteristic different from $2$), so all the quadrics are split. We assume that $o$ in Thm.~\ref{thm:abstractPL} is a homogeneous quadratic singularity (Def.~\ref{def:homogeneoussingularity} with $r=2$), and deduce a motivic version for the classic Picard--Lefschetz formula of \cite[Exp.~XV]{SGA7} and \cite[\S~3]{illusiePL}, depending on the parity of the dimension. The objects and the maps in Thm.~\ref{thm:abstractPL} can be all computed explicitly in terms of Tate motives and maps between them, leading to the motivic formula.

\begin{notation}
	We write $\one \{-n\}$ for the motive $\one(-n)[-2n]$.
\end{notation}

\begin{prop}[Rost]\label{prop:motiveprojquadric}
	Let $Q \subset \P^{n+1}$ be a smooth quadric of dimension n. Then
	\begin{equation*}
		 h(Q) =
		 \begin{cases}
		 \bigoplus_{i=0}^n \one \{-i\} &		\text{n odd} \\
		 \bigoplus_{i=0}^n \one \{-i\}  \oplus \one \{-n/2\} & \text{n even}
		 \end{cases} .
	\end{equation*} 
\end{prop}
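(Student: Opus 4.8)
Since $k$ is algebraically closed, a smooth quadric of dimension $n$ in $\P^{n+1}$ is unique up to projective equivalence, so it is enough to treat one split model, say $Q = Q^n := V(y_0 y_{n+1} - \phi(y_1, \dotsc, y_n)) \subset \P^{n+1}$ with $\phi$ a non-degenerate quadratic form. The plan is to induct on $n$. For $n \leq 1$ the statement is clear: $Q^0$ is a disjoint union of two $k$-points, so $h(Q^0) \simeq \one \oplus \one$, while $Q^1 \simeq \P^1$, so $h(Q^1) \simeq \one \oplus \one\{-1\}$; both are of the asserted shape.

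For $n \geq 2$, I would fix the $k$-point $p = [1 : 0 : \cdots : 0] \in Q$ and use the classical geometry of the tangent hyperplane section. A short coordinate computation gives $T_pQ = \{y_{n+1} = 0\}$; the open complement $U := Q \setminus (Q \cap T_pQ)$ is the graph of $\phi$ inside $\mathbb{A}^{n+1}$, hence $U \simeq \mathbb{A}^n$; and $H := Q \cap T_pQ$ is the cone with vertex $p$ over the split quadric $Q^{n-2} = V(\phi) \subset \P^{n-1}$, the projection away from $p$ exhibiting $H \setminus \{p\}$ as the total space of a line bundle over $Q^{n-2}$. In particular $H \setminus \{p\}$ is smooth of codimension $1$ in $Q \setminus \{p\}$, and $\{p\}$ is smooth of codimension $n$ in $Q$. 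I would then push the gluing fibre sequence $i_* i^! \one \to \one \to j_* j^* \one$ forward along the respective structure morphisms to $\Spec k$, twice. For $H \setminus \{p\} \hookrightarrow Q \setminus \{p\} \hookleftarrow U$, using purity in codimension $1$, the $\mathbb{A}^1$-invariance $h(H \setminus \{p\}) \simeq h(Q^{n-2})$, and $h(\mathbb{A}^n) \simeq \one$, it yields
\begin{equation*}
h(Q^{n-2})(-1)[-2] \to h(Q \setminus \{p\}) \to \one \to h(Q^{n-2})(-1)[-1].
\end{equation*}
By the inductive hypothesis $h(Q^{n-2})$ is a direct sum of Tate motives $\one\{-i\}$ with $0 \leq i \leq n-2$, so each component of the connecting morphism $\one \to h(Q^{n-2})(-1)[-1]$ maps into a Tate motive of strictly smaller Tate twist than its source, and hence vanishes by Prop.~\ref{prop:neg_twist_vanishing}; the sequence splits, $h(Q \setminus \{p\}) \simeq \one \oplus h(Q^{n-2})(-1)[-2]$. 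For $\{p\} \hookrightarrow Q \hookleftarrow Q \setminus \{p\}$, using purity $i^! \one_Q \simeq \one\{-n\}$ in codimension $n$, it yields
\begin{equation*}
\one\{-n\} \to h(Q^n) \to h(Q \setminus \{p\}) \to \one\{-n\}[1],
\end{equation*}
whose connecting morphism vanishes by the same twist argument, so
\begin{equation*}
h(Q^n) \simeq \one \oplus \one\{-n\} \oplus h(Q^{n-2})(-1)[-2].
\end{equation*}
Since $n$ and $n-2$ have the same parity, substituting the two asserted formulas for $h(Q^{n-2})$ into the right-hand side reproduces them for $h(Q^n)$, which closes the induction.

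The step requiring genuine care is the geometric input: identifying $U$ with affine space and $H$ with a cone over $Q^{n-2}$, and checking that $H \setminus \{p\}$, as the total space of a line bundle over $Q^{n-2}$, is smooth of codimension one. This is classical---essentially stereographic projection, equivalently the standard affine cell decomposition of a split quadric---but it should be carried out explicitly in coordinates, with the degenerate small cases $n = 2, 3$ verified by hand; everything downstream is formal manipulation with the six functors, purity, and Prop.~\ref{prop:neg_twist_vanishing}. Alternatively, one may skip the induction and invoke Rost~\cite{Rost}: a split quadric of dimension $n$ carries an affine cellular decomposition with cells precisely in codimensions $0, 1, \dotsc, n$, together with one extra cell in codimension $n/2$ when $n$ is even, and the cohomological motive of a smooth projective cellular variety is the direct sum of the motives $\one\{-c\}$ over its cells of codimension $c$---proved once more by splitting localization sequences via Prop.~\ref{prop:neg_twist_vanishing}---so Rost's Chow-motivic decomposition of the quadric transports to $\DA(k)$ along the canonical realization functor and delivers the two formulas.
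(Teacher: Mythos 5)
Your argument is correct, but it differs from the paper's in a substantive way. The paper's proof is a two-line citation: Rost computed the Chow motives of split quadrics in~\cite{Rost}, and effective Chow motives embed fully faithfully into Voevodsky motives (hence into $\DA(k)$), so the decomposition transfers. Your main argument instead re-proves the decomposition from scratch inside $\DA(k)$ by inducting on $n$ via the stereographic (tangent-hyperplane) cell structure, pushing gluing sequences forward along structure morphisms, invoking purity in codimension $1$ and $n$, and killing all connecting maps with Prop.~\ref{prop:neg_twist_vanishing}. I checked the geometry and the twist bookkeeping: $U \simeq \mathbb{A}^n$, $H \setminus \{p\}$ is a line bundle over $Q^{n-2}$, the connecting map $\one \to h(Q^{n-2})(-1)[-1]$ vanishes term-by-term, and so does $h(Q\setminus\{p\}) \to \one\{-n\}[1]$; the recursion $h(Q^n) \simeq \one \oplus \one\{-n\} \oplus h(Q^{n-2})(-1)[-2]$ reproduces the stated formula since $\one\{-i\}(-1)[-2] = \one\{-(i+1)\}$. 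What the paper's route buys is brevity and a clean black-boxing of the quadric geometry; what yours buys is a self-contained motivic proof that never leaves $\DA(k)$ and makes the cellular splitting mechanism (purity plus negative-twist vanishing) explicit, which is in fact exactly the machinery reused in Prop.~\ref{prop:motiveaffinequadric}. The ``alternative'' you sketch at the end -- citing Rost's cell decomposition and transporting it along the embedding of Chow motives -- is essentially the paper's own proof.
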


\begin{proof}
	These computations have been done by Rost in \cite{Rost} for Chow motives. Since effective Chow motives embed fully faithfully in Voevodsky motives (\cite[Prop.~20.1]{mvw}), this is also true in $\DA(\blank)$.
\end{proof}

\begin{defn}
	We say that $A$ is a smooth affine quadric of dimension $n$ if it is isomorphic to $Q \cap H \subset \P^{n+1} $ where $Q$ is a smooth projective quadric of dimension $n$, $H$ is a projective hyperplane, and $ C:= Q \cap H$ is smooth. Note that $C$ is then a smooth projective quadric of dimension $n-1$, embedded in $H \simeq \P^n$.
	\end{defn}

\begin{prop}[*]\label{prop:motiveaffinequadric}
	Let $A$ be a smooth affine quadric of dimension $n$, then we have the following Verdier dual fibre sequences in $\DA(k)$
		\[
	 \one  \to	 o^* \Psi_f \one \xrightarrow{m_1} \one(-\lceil n/2 \rceil )[-n] , 
	 \]
	 \[
	\bm1(-\lfloor n/2 \rfloor)[-n] \xrightarrow{m_2}	 o^!\Psi_f \one \to	 \bm1(-n)[-2n]. 
	\]
\end{prop}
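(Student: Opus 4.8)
The plan is to feed Theorem~\ref{thm:abstractPL} into Rost's computation of the motives of quadrics. In the quadratic case ($r=2$) the variety $D = V_{\P^{n+1}}(F-T_{n+1}^2)$ is a smooth projective quadric of dimension $n$, $C = V_{\P^n}(F)$ is a smooth projective quadric of dimension $n-1$, and $D^\circ = D\setminus C$ is the smooth affine quadric $\{F=1\}\subset\mathbb{A}^{n+1}_k$; call it $A$. By Theorem~\ref{thm:abstractPL} there are natural equivalences $o^*\Psi_f\one \simeq h(A)$ and $o^!\Psi_f\one \simeq h_\rom{c}(A)$, so the proposition reduces to computing $h(A)$ and $h_\rom{c}(A)$ and matching them with the asserted fibre sequences.

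First I would compute $h(A)$. Pushing the localization sequence~\eqref{eq:motivetrianglea} forward to $\DA(k)$ exhibits $h(A)$ as the cofibre of the Gysin map $\gamma\colon h(C)(-1)[-2]\to h(D)$ (coming from $i^!\one_D \simeq \one_C(-1)[-2]$). By Rost (Proposition~\ref{prop:motiveprojquadric}), since $k$ is algebraically closed, both $h(C)$ and $h(D)$ are finite direct sums of Tate motives $\one\{-i\}$, with a single extra copy of $\one\{-m/2\}$ in the middle when the relevant dimension $m$ is even. The decisive observation is that morphisms of split Tate motives are \emph{rigid}: $\Hom_{\DA(k)}(\one\{-a\},\one\{-b\})$ is $\Q$ for $a=b$ and $0$ otherwise, by Proposition~\ref{prop:neg_twist_vanishing} together with $CH^p(\Spec k)_\Q = 0$ for $p>0$. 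Hence $\gamma$ is completely determined by its effect on Chow groups, its cofibre is again a sum of Tate motives obtained by cancelling the diagonal part, and everything comes down to the classical hyperplane-section analysis for quadrics: the restriction $CH^*(D)\to CH^*(C)$ is surjective, the powers of the hyperplane class lift, and the only failure of bijectivity sits in the middle degree. A short computation, separating the cases $n$ even and $n$ odd (and checking the degenerate small cases by hand), then gives $h(A)\simeq \one\oplus\one(-\lceil n/2\rceil)[-n]$; taking $m_1$ to be the projection onto the second summand produces the first fibre sequence.

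For the second sequence I would dualize rather than repeat the computation. For $a\colon A\to\Spec k$ smooth of pure dimension $n$ one has the standard identity $h_\rom{c}(A)\simeq\mathbb{D}_k(h(A))(-n)[-2n]$ (from $\mathbb{D}_k\circ a_*\simeq a_!\circ\mathbb{D}_A$ and $a^!\one\simeq a^*\one(n)[2n]$). Applying the exact functor $\mathbb{D}_k(\blank)(-n)[-2n]$ to the fibre sequence $\one\to h(A)\xrightarrow{m_1}\one(-\lceil n/2\rceil)[-n]$, and using $\lceil n/2\rceil+\lfloor n/2\rfloor = n$, yields a fibre sequence $\one(-\lfloor n/2\rfloor)[-n]\xrightarrow{m_2}h_\rom{c}(A)\to\one(-n)[-2n]$; transporting along the equivalences of Theorem~\ref{thm:abstractPL} gives the statement. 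Equivalently, one can run the same Chow-theoretic analysis on the dual localization sequence~\eqref{eq:motivetriangleb}, computing $h_\rom{c}(A)$ as the fibre of the restriction $h(D)\to h(C)$.

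The main obstacle is the bookkeeping in the middle paragraph: pinning down the Gysin (equivalently restriction) map between the motives of the quadrics $C\subset D$ exactly — keeping track of the extra middle Tate summand appearing in even dimension, of the relevant orientations and signs, and of the small-dimensional boundary cases — is entirely classical but fiddly. If preferred, this step can be replaced by citing a direct computation of the motive of a smooth affine quadric; deriving it here from Rost's projective computation is the self-contained route that matches the spirit of the paper.
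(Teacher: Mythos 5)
Your route is essentially the paper's, up to Verdier duality: you compute $h(A)$ first as the cofibre of the Gysin map $h(C)(-1)[-2]\to h(D)$ and then dualize to get $h_\rom{c}(A)$, whereas the paper computes $h_\rom{c}(A)$ first as the fibre of the restriction $h(D)\to h(C)$ and then dualizes to get $h(A)$; these are dual presentations of the same computation. The key ingredients are identical — Rost's decomposition, rigidity of Tate motives reducing the restriction/Gysin map to Chow groups, the explicit matrix (for which the paper cites Bachmann's Lemma~29 to keep track of the middle summand and the sign $-1$ in even dimension), and duality for the second sequence — so the proposal is correct and matches the paper's proof.
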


\begin{proof}
	First, changing coordinates and completing to a square, we may assume that $C$ is given by a quadratic form $\phi(X_0, \dotsc, X_n)$, and  $Q$ is given by $\phi_a = \phi + aX_{n+1}^2$.
	
	We now recall Rost's proof for the motive of a quadric, which goes by induction as follows (\cite[Prop>~2]{Rost}): first we can change coordinates if necessary to write $\phi= X_0X_1 + \psi(X_2, \dotsc, X_n)$. Then consider $C_0:= \{X_0 =0\}$ inside $C$. The open complement $C \setminus C_0$ is isomorphic to an affine space and therefore its motive is trivial.
	Note that $C_0$ has the point
	$k \colon p=(0:1:\underline{0}) \hookrightarrow C_0$,
	so $k^! \one_C =  1\{-(n-2)\}$,
	and the complement $C_0 \setminus p$ is $\Aff^1$-equivalent to the quadric defined by $\psi$, which allows proceeding by induction. The same can be done with $Q$, and we get that
	\[ i^* \colon h(Q)= \one \oplus h(Q_0 \setminus p)\{-1\} \oplus \one\{-n\}  \to  h(C)= \one \oplus h(C_0 \setminus p)\{-1\} \oplus \one\{-(n-1)\} , \]
	and $i^*$ is described by the following matrix
	(see~\cite[Lemma~29]{Bachmann}):
	\[
	\begin{pmatrix}
		\id & 0 & 0 \\
		0 & i'^*\{-1\} & 0 \\
		0 & s\{-1\} & 0 
	\end{pmatrix} .
	\]
	Here $i'$ is the inclusion $C_0 \setminus p \hookrightarrow Q_0 \setminus p$, and
	$s \colon h(Q_0 \setminus p)\simeq h(\psi_a) \to \one\{-(n-2)\}$
	is the fundamental class.
	Since $i'$ is equivalent to an immersion of quadrics, induced by quadrics of dimensions lower by $2$, we may proceed by induction and get an explicit description for $i^*$.

	If $n=2m+1$ is odd,
		\[
	i^* \colon h(Q) = \bigoplus_{i=0, i \neq m}^n \one  \{-i\} \oplus \one\{-m\} \to   h(C) =\bigoplus_{i=0, i\neq m}^{n-1} \one  \{-i\}  \oplus \one \{-m\} ^{\oplus 2}
	\]
	is given by
		\[
	\begin{pmatrix}
		\id & 0 & 0 \\
		0 & 0 & 1 \\
		0 & 0 & 1 
	\end{pmatrix}
	.
	\]
	If $n=2m$ is even, then 
	\[ i^* \colon h(Q) =  \bigoplus_{i=0, i\neq m}^n \one  \{-i\} \oplus \one \{-m\} ^{\oplus 2} \to h(C) = \bigoplus_{i=0, i \neq m}^{n-1} \one  \{-i\} \oplus \one \{-m\} ,\]
		is given by
	\[
	\begin{pmatrix}
		\id & 0 & 0 & 0 \\
		0 & 0 & 1 & -1
		\end{pmatrix}
		.
		\]
	This argument will appear in more detail in a later version of this paper.		Now, using the localization sequence
	\[ h_{\rom{c}}(A) \to h(Q) \to h(C) , \] 
	we are able to compute the motive of $A$. To simplify we can cancel out similar terms according to the diagrams below. We still separate between the even and odd cases,
	here all columns rows are fibre sequences.
	When $n=2m+1$ is odd we have,
	\[\begin{tikzcd}[ampersand replacement=\&, row sep = large]
		0 \ar[r] \ar[d]	\& \bigoplus_{i=0, i\neq m}^{n-1} \one\{-i\} \ar[r, "\id"] \ar[d] \& \bigoplus_{i=0, i\neq m}^{n-1} \one\{-i\} \ar[d] \\
		h_{\rom{c}}(A) \ar[r] \ar[d] \& \bigoplus_{i=0}^n \one\{-i\} \ar[d] \ar[r]
		\& \bigoplus_{i=0}^{n-1} \one \{-i\}  \oplus \one\{-m\} \ar[d, ""] \\
		h_{\rom{c}}(A) \ar[r] \& \one\{-n\} \oplus \one\{-m\} \ar[r, "{\begin{pmatrix} 0 & 1 \\ 0 & 1 \end{pmatrix}}"] \&   \one\{-m\}\oplus  \one\{-m\}.
	\end{tikzcd}\]
		The last horizontal arrow above fits in the following commutative diagram of vertical cofibre sequences
	\begin{equation*}\begin{tikzcd}[ampersand replacement=\&]
			\bm1\{-n\}
			\ar[d, "\begin{pmatrix} 0 \\ 1 \end{pmatrix}" ']
			\ar[r, "0"]
			\&[3em] \bm1\{-m\} 
			\ar[d, "\begin{pmatrix} 0 \\ 1 \end{pmatrix}"]
			\\[3em]
			 \bm1\{-m\} \oplus \one\{-n\}
			\ar[d, "{\begin{pmatrix} 1 & 0 \end{pmatrix}}" ']
			\ar[r, "{\begin{pmatrix} 1 & 0 \\ 1 & 0 \end{pmatrix}}"]
			\&  \bm1\{-m\} \oplus \one\{-m\}
			\ar[d, "{\begin{pmatrix} 1 & -1 \end{pmatrix}}"]
			\\[2em]
			 \bm1\{-m\} \ar[r, "\id"]
			\&  \bm1\{-m\} .
	\end{tikzcd}\end{equation*}
	
	Taking the fibre of the first row and shifting, gives the fibre sequence 
	\[
	\one(-m)[-2m-1] \to	h_{\rom{c}}(A) \to  \one(-n)[-2n],
	\]
	and by Verdier duality, as $h(A) = \mathbb{D}(h_{\rom{c}}(A)) (n)[2n]$, we get
	\[
\one \to	h(A) \to  \one(-m)[-n].
\]	
	When $n=2m$ is even, we have
	\[\begin{tikzcd}
		0 \ar[r] \ar[d]	&\bigoplus_{i=0, i\neq m}^{n-1} \one \{-i\} \ar[r, "\id"] \ar[d] & \bigoplus_{i=0, i\neq m}^{n-1} \one\{-i\} \ar[d] \\
		h_{\rom{c}}(A) \ar[r] \ar[d] &  \bigoplus_{i=0}^n \one \{-i\} \oplus \one\{-m\} \ar[d] \ar[r]
		& \bigoplus_{i=0}^{n-1} \one \{-i\}  \ar[d, ""] \\
		h_{\rom{c}}(A) \ar[r] & \one\{-n\} \oplus \one\{-m\} \oplus \one\{-m\} \ar[r] &   \one\{-m\} .
	\end{tikzcd}\]
	Similarly to the odd case, we get
	\[
	h_{\rom{c}}(A) \simeq 	\one(-m)[-2m] \oplus  \one(-n)[-2n] ,
	\]
	and can deduce $h(A)$, so the statement of the proposition holds.
\end{proof}

\begin{thm}[%
	The quadratic Picard--Lefschetz formula, *%
]\label{thm:quadratic_PL}
Assume $k$ is algebraically closed of characteristic different from $2$.
Let $X$ be regular and let
$f \colon X \to \Aa^1_k$
be a flat, quasi-projective morphism of relative dimension $n$,
smooth except for
an isolated quadratic singularity $o \in X_\sigma$.	
If $n$ is even, then the map $\var$ is the zero map.
If $n=2m+1$ is odd,
$-\var$ factors as
\begin{equation*}
o^*\Psi_f \one
\xrightarrow{m_1} \bm1(-m-1)[-n]
\xrightarrow{m_2(-1)} o^!\Psi_f \one(-1), 
\end{equation*}
with notation as in Prop.~\ref{prop:motiveaffinequadric}.
\end{thm}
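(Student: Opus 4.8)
The plan is to specialize the general formula of Thm.~\ref{thm:abstractPL} to $r = 2$ and combine it with the explicit motives of the quadrics involved. For a non-degenerate quadratic $F$, the scheme $C = V(F) \subset \P^n$ is a smooth projective quadric of dimension $n-1$, $D = V(F - T_{n+1}^2) \subset \P^{n+1}$ a smooth projective quadric of dimension $n$, and $A := D^\circ = D \setminus C$ a smooth affine quadric of dimension $n$; so Thm.~\ref{thm:abstractPL} gives $o^* \Psi_f \one \simeq h(A)$ and $o^! \Psi_f \one \simeq h_{\rom{c}}(A)$, and identifies $-2 \cdot \var_f$ with the composite $h(A) \xrightarrow{\alpha} h(C)(-1)[-1] \xrightarrow{\beta(-1)} h_{\rom{c}}(A)(-1)$. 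On the other hand Prop.~\ref{prop:motiveaffinequadric} places $o^* \Psi_f \one$ in a fibre sequence $\one \to o^* \Psi_f \one \xrightarrow{m_1} \one(-\lceil n/2 \rceil)[-n]$ and $o^! \Psi_f \one(-1)$ in $\one(-\lfloor n/2 \rfloor - 1)[-n] \xrightarrow{m_2(-1)} o^! \Psi_f \one(-1) \to \one(-n-1)[-2n]$. So the problem reduces to locating the element $\var_f \in \Hom_{\DA(k)}(o^* \Psi_f \one, o^! \Psi_f \one(-1))$ relative to these two fibre sequences.

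First I would compute that Hom group from the two associated long exact sequences, using the vanishing Prop.~\ref{prop:neg_twist_vanishing}. When $n = 2m$ is even, the candidate middle summands $\one(-m)[-n]$ on the source and $\one(-m-1)[-n]$ on the target carry different Tate twists, and every remaining contribution dies by negative-twist vanishing; thus $\Hom_{\DA(k)}(o^* \Psi_f \one, o^! \Psi_f \one(-1)) = 0$ and $\var_f = 0$, which is the even case. When $n = 2m+1$ is odd, the same computation shows the Hom group is at most one-dimensional and that every element of it is a scalar multiple of $m_2(-1) \circ m_1$ (each such map factors through the common summand $\one(-m-1)[-n]$, with $m_1$ and $m_2(-1)$ the two structure maps). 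Hence $\var_f = \kappa \cdot m_2(-1) \circ m_1$ for some $\kappa \in \Q$, and since $-2 \var_f = \beta(-1) \circ \alpha$, the theorem reduces to the identity $\beta(-1) \circ \alpha = 2 \cdot m_2(-1) \circ m_1$, i.e.\ $\kappa = -1$.

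Pinning down this scalar is the main obstacle, and it is where the degree $2$ of the quadric --- classically the self-intersection $\pm 2$ of the vanishing cycle --- must be made to appear. The plan is to describe $\alpha$ and $\beta$ as the connecting maps of the localization sequence $h_{\rom{c}}(A) \to h(D) \xrightarrow{i^*} h(C)$ of the hyperplane section $i \colon C \hookrightarrow D$ and of its Verdier dual $h(C)(-1)[-2] \xrightarrow{i_*} h(D) \to h(A)$ --- the very sequences used to prove Prop.~\ref{prop:motiveaffinequadric} --- and then to evaluate the induced endomorphism of the middle Tate summand using Rost's explicit matrix for $i^*$ together with the Poincaré pairing on $h(C)$, in which the primitive middle class has self-intersection $\pm 2$. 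Carrying this bookkeeping through gives $\kappa = -1$ and finishes the odd case. A shorter but less satisfactory route is to apply $\ell$-adic (or, in characteristic $0$, Betti) realization: the classical Picard--Lefschetz formula of \cite[Exp.~XV]{SGA7} and \cite[\S~3]{illusiePL} gives $-\var = m_2(-1) \circ m_1$ after realization, and since $m_1$, $m_2$ and $\var_f$ are already defined in $\DA(k)$ and $\Q = \End_{\DA}(\one(-m-1)[-n])$ injects into its realization, this determines $\kappa$ (here one uses that $k$ is algebraically closed). A self-contained, realization-free computation of the scalar will appear in a later version.
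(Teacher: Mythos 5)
Your proposal is correct and follows essentially the same route as the paper: specialize Thm.~\ref{thm:abstractPL} to $r=2$, use the fibre sequences of Prop.~\ref{prop:motiveaffinequadric} together with negative-twist vanishing (Prop.~\ref{prop:neg_twist_vanishing}) to show the relevant Hom group is zero (even $n$) or one-dimensional and spanned by $m_2(-1)\circ m_1$ (odd $n$), then pin down the scalar from $-2\var_f = \beta(-1)\circ\alpha$ by tracking the middle Tate summand through the localization sequences of quadrics. Your framing of the scalar computation via Rost's matrix for $i^*$ and the self-intersection $\pm 2$ of the middle primitive class is the same bookkeeping the paper carries out with its dashed diagonal arrows into and out of $h(C)(-1)[-1]$, and both leave that computation to a later version; your $\ell$-adic realization shortcut is a valid alternative the paper does not invoke here.
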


\begin{proof}
	When $n$ is even, the result follows immediately from Prop.~\ref{prop:motiveaffinequadric}, applying Prop.~\ref{prop:neg_twist_vanishing}.
	
	For the odd case consider the following diagram:
	\[
	\begin{tikzcd}[row sep=large]
		\one \arrow[d] & & \one(-n)[-2n] \\
		h(A) \arrow[r,"\alpha"] \arrow[d,"m_1"'] 
		& h(C)(-1)[-1]  \arrow[dr, dashed] \arrow[r,"\beta(-1)"] 
		& h_{\rom{c}}(A)(-1) \arrow[u]  \\
		\one(-m-1)[-n] \arrow[rr,"?"] \arrow[ur, dashed] & & \one(-m-1)[-n] \arrow[u,"m_2(-1)" '] .
	\end{tikzcd}
	\]
	The columns are the fibre sequences of Prop.~\ref{prop:motiveaffinequadric}.
	The middle row is the factorization of $-2\var_f$ according to our main result, Thm.~\ref{thm:abstractPL}.
	The lower horizontal map exists using
	Prop.~\ref{prop:neg_twist_vanishing}
	with the vertical sequences.
	Similarly, we get the existence of the diagonal arrows, as 
	\[h(C)(-1)[-1] \simeq \bigoplus_{i=0}^{n-1} \one (-i-1)[-2i-1]  \oplus \one [-m-1](-n) \]
	by Prop.~\ref{prop:motiveprojquadric}. We can compute the diagonal arrows, as the maps $\beta$ and $\alpha$ come from the sequence considered in Prop.~\ref{prop:motiveaffinequadric} and its dual, then deduce $?=2$, and so the statement of the theorem. A more detailed proof will appear in a later version of this paper.
\end{proof}	

\printbibliography

\end{document}